\documentclass{article}
\usepackage{geometry}
\usepackage{algorithm}
\usepackage{algpseudocode}
\usepackage{amsmath,amssymb,amsthm}
\usepackage{bm}
\usepackage{listings}
\usepackage{graphicx,float}
\usepackage{subfigure}
\usepackage{hyperref}
\usepackage{cleveref}
\usepackage{fancyhdr}
\usepackage{booktabs}
\geometry{
  a4paper,
  left=1.in,
  right=1.in,
  top=1.0in, 
  bottom=1.0in,
}
\setcounter{page}{1}
\pagestyle{fancy}
\lhead{}
\chead{}
\rhead{}

\numberwithin{equation}{section}
\newtheorem{assumption}{Assumption}[section]
\newtheorem{theorem}{Theorem}[section]
\newtheorem{proposition}{Proposition}[section]
\newtheorem{lemma}{Lemma}[section]
\newtheorem{remark}{Remark}[section]
\newtheorem{xmpl}{Example}[section]

\def\i{\mathrm{i}}
\def\d{\mathrm{d}}

\title{Frozen Gaussian sampling for scalar wave equations}
\author{Lihui Chai\thanks{School of Mathematics, Sun Yat-sen University, Guangzhou, 510275, China (chailihui@mail.sysu.edu.cn)} \and Ye Feng\thanks{Department of Philosophy, Sun Yat-sen University, Guangzhou, 510275, China (fengy78@mail2.sysu.edu.cn)} \and Zhennan Zhou\thanks{Beijing International Center for Mathematical Research, Peking University, Beijing, 100871, China (zhennan@bicmr.pku.edu.cn).}}
\date{}

\begin{document}
\maketitle
\begin{abstract}
    In this article, we introduce the frozen Gaussian sampling (FGS) algorithm to solve the scalar wave equation in the high-frequency regime. The FGS algorithm is a Monte Carlo sampling strategy based on the frozen Gaussian approximation, which greatly reduces the computation workload in the wave propagation and reconstruction. In this work, we propose feasible and detailed procedures to implement the FGS algorithm to approximate scalar wave equations with Gaussian initial conditions and WKB initial conditions respectively. For both initial data cases, we rigorously analyze the error of applying this algorithm to wave equations of dimensionality $d \geq 3$. In Gaussian initial data cases, we prove that the sampling error due to the Monte Carlo method is independent of the typical wave number. We also derive a quantitative bound of the sampling error in WKB initial data cases. Finally, we validate the performance of the FGS and the theoretical estimates about the sampling error through various numerical examples, which include using the FGS to solve wave equations with both Gaussian and WKB initial data of dimensionality $d = 1, 2$, and $3$.

    \textbf{Keywords:} Wave equation, Frozen Gaussian approximation, Frozen Gaussian sampling, Monte Carlo method

    \textbf{AMS subject classifications:} 65C05, 65M15, 65M75, 35B40
\end{abstract}

\section{Introduction}
The wave equation is widely used in describing a variety of oscillation phenomena around an equilibrium. Computing solutions to wave equations plays an important role in fluid dynamics, seismology, electromagnetism, optics, relativistic physics, and so on.
In this paper, we consider the following wave equation,
\begin{equation}\label{wave_eq}
\partial_t^2 u - c^2(\bm{x})\Delta_{\bm{x}} u = 0,\quad \bm{x} \in\mathbb{R}^d, 
\end{equation}
with initial conditions,
\begin{equation}\label{init_con}
\begin{cases}
u(0,\bm{x}) = f_0^k(\bm{x}), \\
\partial_t u(0,\bm{x}) = f_1^k(\bm{x}),
\end{cases}
\end{equation}
where $u$ is the wavefield, $c(\bm{x})$ is the bounded velocity with its infimum $\mathrm{inf}_{\bm{x}\in\mathbb{R}^d} c(\bm{x}) > 0$, $d$ is the dimensionality, and $\Delta_{\bm{x}}$ is the Laplace operator in $\bm{x}$. We assume that the initial wavefield (\ref{init_con}) belongs to the Schwartz class, which is the class of rapidly decreasing functions on $\mathbb{R}^d$, and $k$ is the wave number or the spatial frequency of the wave. Basically, the number $k$ arises from the ratio of two different scales in the wave equation: the large length scale of the medium size, and the small wavelength scale. In the high-frequency regime, namely $k\gg 1$, the solution to \eqref{wave_eq} is highly oscillatory, which makes the numerical computation extremely hard, especially when $d$ is large. This is a manifestation of the well-known ``curse of dimensionality''. 

So far, two main categories of numerical methods have been developed and studied intensively in order to get the numerical solution accurately and efficiently. One category is direct methods based on full temporal-spacial discretization, including finite difference methods\cite{AlKeBo1974}, finite element methods\cite{RICHTER199465,Baoetal1998}, spectral methods\cite{KoBa1982,Shen2011}, and spectral element methods\cite{Komatitsch1999}. These methods are widely used in many applications. However, these methods have very strict requirements on the discretization --  in order to achieve good accuracy the mesh size has to be comparable to the wavelength or even smaller, which usually results in high memory and computational time cost and makes them difficult in the high-frequency simulation. The other category is asymptotic methods based on geometric optics\cite{EnRu:03,Runborg2007}. Compared to direct discretization to the wave equation, the asymptotic methods use special solution ansatz such as WKB and Gaussian beams so that they can resolve the small wavelength by multi-scale expansion with less restricted memory load and computational costs, and thus are more feasible for high-frequency wave propagation. However, the traditional WKB methods may produce unbounded solutions at caustics since the eikonal equation is of the Hamilton-Jacobi type. The Gaussian beam methods (GBM) can provide accurate solutions beyond caustics, but the convergence of GBM relies on a Taylor expansion determined by the width of beams, and when beams spread significantly, one loses accuracy in the wavefield. The frozen Gaussian approximation (FGA) uses a superposition of frozen (fixed-width) Gaussian functions to approximate the wavefield and thus can overcome the difficulty of beam spreading in GBM while maintaining the accuracy at caustics. The FGA was originally proposed in quantum chemistry for simulating Schr\"odinger equation by Heller in \cite{He:81} and Herman \& Kluk in \cite{HeKl:84}. Later on, a systematical justification of this method was developed in \cite{Ka:94,Ka:06,SwRo:09}. The FGA was generalized to high-frequency wave equations and strictly hyperbolic systems by the pioneer works \cite{LuYa:11,LuYa:CPAM,LuYa:MMS}, and recently to elastic wave equations \cite{hate2018fga}, semi-classical Dirac equations \cite{ChLo:19}, non-adiabatic dynamics for semi-classical
Schr\"odinger equations \cite{LuZh:2016FGAsh,LuZh:2018FGAsh,Huang2022}. The FGA has been applied to seismic tomography as an efficient forward model solver and has shown its advantage in both accuracy and parallelism \cite{chai2017frozen,Chai2021fgaconv}. 

We remark here that these implementations of the FGA mentioned above rely on the direct phase-space discretization of a two-fold integration form of the approximated solution (see equation \eqref{fga_ansatz} in the next section for details). This usually results in a large number of Gaussian functions, and in order to reconstruct the wavefield numerically, one has to sum them together, which makes the reconstruction the most computationally expensive step in the FGA algorithm. This disadvantage limits the development of the FGA in the areas where one needs to call the wavefield reconstruction iteratively over time propagation, such as in seismic tomography \cite{chai2018tomo}. To reduce the computational burden, stochastic approaches have been proposed for phase-space discretization. Kluk, Herman and David \cite{KlHeDa:86} employ a Monte Carlo algorithm to select the initial phase-space points in the study of quantum FGA propagation of a highly excitedly anharmonic oscillator. Later, the authors in \cite{LuZh:2016FGAsh,LuZh:2018FGAsh} give the FGA ansatz \eqref{fga_ansatz} a probabilistic interpretation and proposed a Monte-Carlo sampling algorithm to reconstruct physical observables for quantum surface hopping. Stochastic strategies have also been applied to phase-space approximations beyond FGA. For instance, Lasser and Sattlegger \cite{LaSa:17} applied a Monte Carlo method in the discretization of the Herman–Kluk propagator as a phase-space integral. Besides, a technique using the random batch method has been developed to reduce the computational cost in constructing the stochastic gradient for seismic tomography based on FGA \cite{Hu2021SeismicTW}. 

Recently, the authors in \cite{Xie2021} systematically studied the Monte Carlo approach to FGA's phase-space integral in the aspect of optimized sampling strategy and named it frozen Gaussian sampling (FGS). The main spirit of the FGS method is to view the FGA ansatz as an expectation of random variables on the phase-space and initialize the Gaussians randomly w.r.t. the underlying probability density function. The FGS can reduce the working load in reconstruction significantly and makes great success in simulating high-dimensional Schr\"odinger equations. In \cite{Xie2021}, sampling strategies are given for both Gaussian and WKB initial data and it is proved rigorously for Gaussian type initial data that the sampling error decays in the square-root order of the sampling number and does not depend on the asymptotic parameter.

Since the FGS has shown promising results for Schr\"odinger equations, it is natural to generalize this methodology to the FGAs for other equations, e.g., wave equations, Dirac equations, and hyperbolic systems. In this paper, we aim to develop efficient FGS algorithms to simulate high-frequency wave propagation and establish quantitative estimates for the sampling error. Two main differences arise in the scalar wave equation from the Schr\"odinger equation: first, the Hamiltonian of the wave equation takes the form of $H(\bm{q},\bm{p})=c(\bm{q})|\bm{p}|$ which has a singularity at $\bm{p}=\bm{0}$. This singularity introduces more difficulties in both designing the probability density function for the FGS and analyzing the sampling error. Second, the natural norm used for the Schr\"odinger equation is the $L^2$ norm, but for the wave equation, we need to deal with the high-frequency energy norm. 
{
We demonstrate that, with careful consideration of the probability formulation of the FGA, we can design efficient sampling algorithms for both Gaussian and WKB initial data. Rigorous error estimates can be established for the dimensionality $d\geq3$, and numerical results show the algorithms also perform well for $d=1,2$. Our analysis proves the sampling error does not depend on the asymptotic parameter for Gaussian initial data. Further, what remains unclear in \cite{Xie2021} is the rigorous error estimate for WKB initial data. In this paper, we use the method of stationary phase to analyze the error caused by approximating the probability density function and rigorously derive an error bound for the FGS of wave equations with WKB initial data.
}
 
The rest of this paper is organized as follows: in Section \ref{sec:FGS}, we briefly review the FGA for the scalar wave equation, then we introduce the FGS algorithm to compute the wave function and present a framework of the error analysis. In Section \ref{sec:ErrGauss} and \ref{sec:ErrWKB}, we focus on the wave equation with initial data in the form of Gaussian and WKB, respectively, and propose feasible sampling strategies to apply the FGS algorithm and the associated quantitative error analysis. In Section \ref{sec:num_ex}, we present a variety of numerical examples of applying the FGS algorithm to wave equations with both Gaussian and WKB initial data in dimensionality one to three. Section \ref{sec:Conclusion} concludes this paper.

\section{Frozen Gaussian Approximation and Frozen Gaussian Sampling}\label{sec:FGA-FGS}

In this section, we briefly review the frozen Gaussian approximation (FGA) to the scalar wave equations, which is based on an integral representation on the phase space with several variables evolved by an ODE system. Then we introduce the frozen Gaussian sampling (FGS) algorithm. The highlight of the FGS is that it gives the FGA ansatz a probabilistic interpretation and uses a Monte Carlo method to approximate the FGA ansatz. At the end of this section, we establish a framework of analyzing the sampling error due to the usage of stochastic simulation in the FGS.

\subsection{Frozen Gaussian Approximation for Scalar Wave Equations}\label{sec:FGA-SW}

We consider the wave equation \eqref{wave_eq} with initial conditions \eqref{init_con}. {In this subsection, we briefly outline the frozen Gaussian approximation of the solution of \eqref{wave_eq} without derivation. Interested readers may refer to \cite{chai2017frozen,LuYa:11} for more details.} The frozen Gaussian approximation (FGA) gives an integral representation of the solution wave function $u$ as follows, 
\begin{equation}\label{fga_ansatz}
\begin{aligned}
u_{\text{FGA}}^k (t,\bm{x}) & = \sum_{\pm}\int_{\mathbb{R}^d}\int_{\mathbb{R}^d}\frac{a_\pm(t,\bm{q},\bm{p})\psi_\pm^k(\bm{q},\bm{p})}{(2\pi/k)^{3d/2}}e^{\mathrm{i}k\Theta_\pm(t,\bm{x},\bm{q},\bm{p})}\mathrm{d}\bm{q}\mathrm{d}\bm{p} 
,
\end{aligned}
\end{equation}
where,
\begin{equation}\label{fga_ansatz_1}
\psi_{\pm}^k(\bm{q},\bm{p})=\int_{\mathbb{R}^d}u_{\pm,0}^k(\bm{y},\bm{q},\bm{p})e^{-\mathrm{i}k\bm{p}\cdot(\bm{y}-\bm{q})-\frac{k}{2}|\bm{y}-\bm{q}|^2}\mathrm{d}\bm{y},
\end{equation}
\begin{equation}\label{fga_ansatz_2}
u_{\pm,0}^k(\bm{y},\bm{q},\bm{p}) = \frac{1}{2}\left(f_0^k(\bm{y})\pm \frac{\i}{kc(\bm{q})|\bm{p}|} f_1^k(\bm{y})\right),
\end{equation}
The phase function $\Theta$ is given by
\begin{equation}\label{phase_func}
\Theta_{\pm}(t,\bm{x},\bm{q},\bm{p}) = \bm{P}_{\pm}(t,\bm{q},\bm{p})\cdot(\bm{x}-\bm{Q}_{\pm}(t,\bm{q},\bm{p}))+\frac{\mathrm{i}}{2}|\bm{x}-\bm{Q}_{\pm}(t,\bm{q},\bm{p})|^2.
\end{equation}
In equations (\ref{fga_ansatz}) to (\ref{phase_func}), $\i = \sqrt{-1}$ is the imaginary unit, and subscripts $+$ and $-$ indicate the two wave branches. The \emph{position center} $Q_{\pm}$, \emph{momentum center} $P_{\pm}$, and \emph{amplitude} $a_{\pm}$ are the time-dependent variables that arise from the FGA/FGS method, and they are computed by solving a system of ODEs which we outline below. 

The evolution of $\bm{Q}_{\pm}(t,\bm{q},\bm{p})$ and $\bm{P}_{\pm}(t,\bm{q},\bm{p})$ satisfies the ray tracing equations corresponding to the Hamiltonian $H_{\pm} = \pm c(\bm{Q}_{\pm}) |\bm{P}_{\pm}|$, that is, 
\begin{equation} \label{eq:ode_qp}
        \frac{\d \bm{Q}_{\pm}}{\d t} = \partial_{\bm{P}_{\pm}} H_{\pm}, \quad\text{and}\quad
        \frac{\d \bm{P}_{\pm}}{\d t} = \mp \partial_{\bm{Q}_{\pm}} H_{\pm}, 
\end{equation}
with initial conditions,
\begin{equation} \label{eq:init_qp}
    \bm{Q}_{\pm} (0,\bm{q},\bm{p}) = \bm{q},\quad\text{and}\quad \bm{P}_{\pm} (0,\bm{q},\bm{p}) = \bm{p}.
\end{equation}
The evolution of $a_{\pm}(t,\bm{q},\bm{p})$ is given by 
\begin{equation} \label{eq:ode_a}
    \frac{\d a_{\pm}}{\d t} = a_{\pm} \frac{\partial_{\bm{P}_{\pm}} H_{\pm} \cdot \partial_{\bm{Q}_{\pm}} H_{\pm}}{H_{\pm}} + \frac{a_{\pm}}{2} \mathrm{tr} \left(Z^{-1}_{\pm} \frac{\d Z_{\pm}}{\d t}\right),
\end{equation}
with the initial condition, 
\begin{equation} \label{eq:init_a}
    a_{\pm}(0,\bm{q},\bm{p}) = 2^{d/2}.
\end{equation}
Since the initial amplitude for each branch is equal to a constant $2^{d/2}$, we shall omit the subscripts $\pm$ in its notation and refer to it as $a(0,\bm{q},\bm{p})$ for brevity. 
In (\ref{eq:ode_a}), the following shorthand notations are used:
\begin{equation}
    \partial_{\bm{z}} = \partial_{\bm{q}} - \i \partial_{\bm{p}},\quad Z_{\pm} = \partial_{\bm{z}} \left(\bm{Q}_{\pm} + \i \bm{P}_{\pm}\right).
\end{equation}
Here $\partial_{\bm{z}} \bm{Q}_{\pm}$ and $\partial_{\bm{z}} \bm{P}_{\pm}$ are understood as matrices, with the $(j,k)$ component of $\partial_{\bm{z}} \bm{Q}_{\pm}$ given by $\partial_{\bm{z}_j} {\bm{Q}_{\pm}}_k$. 

\subsection{Frozen Gaussian Sampling for Scalar Wave Equations}\label{sec:FGS}

In Section \ref{sec:FGA-SW}, we point out that the FGA ansatz (\ref{fga_ansatz}) is an integral on the phase space. If we can write the FGA ansatz in the form of expectations, then we may approximate the integral through a Monte Carlo method. This is the idea of the \textbf{frozen Gaussian sampling} (FGS). In this subsection, we discuss how to implement the FGS for general scalar wave equations of dimensionality $d$. 

First notice that the FGA ansatz (\ref{fga_ansatz}) is a summation of two integrals that represent two wave branches respectively, that is,
\begin{equation}
    u_{\mathrm{FGA}}^k (t,\bm{x}) = u_{\mathrm{FGA},+}^k (t,\bm{x}) + u_{\mathrm{FGA},-}^k (t,\bm{x}).
\end{equation}
Here we use the notations,
\begin{align}
u_{\mathrm{FGA},\pm}^k (t,\bm{x}) = \frac{1}{(2\pi/k)^{3d/2}}\int_{\mathbb{R}^{2d}}a_{\pm}(t,z_0)\psi_{\pm}^k(z_0)e^{\mathrm{i}k\Theta_{\pm}(t,\bm{x},z_0)}\mathrm{d}z_0.
\end{align}

With this form, we may write the FGA ansatz into a summation of two expectations. We introduce two real-valued probability density functions $\pi_+ (\cdot)$ and $\pi_- (\cdot)$ on the phase space $\mathbb{R}^{2d}$, each of which then defines a probability measure $\mathbb{P}_{\pm}$ on $\mathbb{R}^{2d}$:
\begin{align}
    \mathbb{P}_{\pm}(\Omega) = \int_{\Omega} \pi_{\pm} (z_0) \mathrm{d} z_0, \text{ for all } \Omega \subset \mathbb{R}^{2d},
\end{align}
where $\mathbb{P}_{\pm}(\mathbb{R}^{2d}) = \int_{\mathbb{R}^{2d}} \pi_{\pm} (z_0) \mathrm{d} z_0 = 1$. 

With the density functions $\pi_{\pm}$, we can rewrite the FGA ansatz (\ref{fga_ansatz}) as 
\begin{align}
u_{\mathrm{FGA}}^k (t,\bm{x}) &= \sum_{\pm}\frac{1}{(2\pi/k)^{3d/2}}\int_{\mathbb{R}^{2d}}\pi_\pm(z_0)\frac{a_\pm(t,z_0)\psi_\pm^k(z_0)}{\pi_\pm(z_0)}e^{\mathrm{i}k\Theta_\pm(t,\bm{x},z_0)}\mathrm{d}z_0 \nonumber \\
&= \sum_{\pm}\mathbb{E}_{z_0\sim \pi_\pm} \left[\frac{1}{(2\pi/k)^{3d/2}}\frac{a_\pm(t,z_0)\psi_\pm^k(z_0)}{\pi_\pm(z_0)}e^{\mathrm{i}k\Theta_\pm(t,\bm{x},z_0)} \right] 
\label{fga_expectation},
\end{align}
where $z_0 = (\bm{q}, \bm{p})$. 

Then we may use a Monte Carlo method for $u_{\mathrm{FGA}}^k$. We can obtain $M^+$ samples from the density $\pi_+$ and $M^-$ samples from the density $\pi_-$, but for simplicity we set $M^+ = M^- = M$. From each density function, we generate $M$ random samples, denoted by $\{z_{0,+}^{(j)}\}_{j=1}^M$ from the density $\pi_+$ and $\{z_{0,-}^{(j)}\}_{j=1}^M$ from the density $\pi_-$, and approximate $u_{\mathrm{FGA}}^k$ through the following:
\begin{align} \label{eq:fgs_wavefunc}
u_{\mathrm{FGA}}^k (t,\bm{x}) & \approx u_{\mathrm{FGS}}^k\left(t,\bm{x}\right) 
= \frac{1}{M}\sum_{j=1}^M \Lambda_+\left(t,\bm{x},z_{0,+}^{(j)}\right) + \frac{1}{M}\sum_{j=1}^M \Lambda_-\left(t,\bm{x},z_{0,-}^{(j)}\right),
\end{align}
where
\begin{equation}
\Lambda_{\pm}\left(t,\bm{x},z_{0,\pm}^{(j)}\right) = \frac{1}{(2\pi/k)^{3d/2}}\frac{a_{\pm}\left(t,z_{0,\pm}^{(j)}\right)\psi_{\pm}^k\left(z_{0,\pm}^{(j)}\right)}{\pi_{\pm}\left(z_{0,\pm}^{(j)}\right)}\exp\left(\mathrm{i}k\Theta_{\pm}\left(t,\bm{x},z_{0,\pm}^{(j)}\right)\right),
\end{equation}
and $u^k_{\mathrm{FGS}}$ is referred to as the frozen Gaussian sampling (FGS) wave function. Later we will show that in Gaussian or WKB initial data cases, the probability density functions $\pi_+$ and $\pi_-$ can be selected as the same. In this situation, we denote $\pi = \pi_{\pm}$ for simplicity, and a set of samples $\{z_0^{(j)}\}_{j=1}^M$ can be obtained from a single probability density function $\pi$. Note that $\Lambda_{\pm}(t,\bm{x},z_{0})$ is treated as a random variable, which is a function of $z_{0}$, with the expectation
\begin{equation}
\mathbb{E} \left[\Lambda_{\pm}(t,\bm{x},z_0)\right] = u_{\mathrm{FGA},\pm}^k (t,\bm{x}).
\end{equation}
Here we denote that
\begin{align}
u_{\mathrm{FGS},\pm}^k\left(t,\bm{x};M,\left\{z_0^{(j)}\right\}_{j=1}^M\right) = \frac{1}{M}\sum_{j=1}^M \Lambda_{\pm}\left(t,\bm{x},z_0^{(j)}\right).
\end{align}

Algorithmically, one may sample $z_0$ according to the probability density functions $\pi_{\pm}$ respectively to obtain two sets of random samples $\{z_{0,+}^{(j)}\}_{j=1}^M$ and $\{z_{0,-}^{(j)}\}_{j=1}^M$, and then obtain the FGA variables through evolving the ODE system (\ref{eq:ode_qp}) and (\ref{eq:ode_a}) up to time $t$. With enough samples, one can approximate the FGS wave function numerically by (\ref{eq:fgs_wavefunc}). Summarizing above, we give the pseudo-code of frozen Gaussian sampling (FGS) in Algorithm \ref{ag:fgs_ag}. 

\begin{algorithm}[htb]
    \caption{Frozen Gaussian Sampling} \label{ag:fgs_ag}
    \begin{algorithmic}[1]
        \State \textbf{Initial sampling:} generate two sets of initial samples $\{z_{0,+}^{(j)}\}_{j=1}^M$ and $\{z_{0,-}^{(j)}\}_{j=1}^M$ from the probability density functions $\pi_+$ and $\pi_-$ respectively on the phase space $\mathbb{R}^{2d}$.  
        \State \textbf{Time evolution:} for each branch and each $j = 1:M$, evolve the ODE system (\ref{eq:ode_qp}) and (\ref{eq:ode_a}) up to time $t$ with initial conditions (\ref{eq:init_qp}) and (\ref{eq:init_a}) to obtain the FGA variables $\bm{Q}_{\pm}(t,z_0^{(j)})$, $\bm{P}_{\pm}(t,z_0^{(j)})$ and $a_{\pm}(t,z_0^{(j)})$, where $z_0^{(j)}$ stands for both $z_{0,+}^{(j)}$ and $z_{0,-}^{(j)}$.
        \State \textbf{Reconstruction:} compute the FGS wave function based on (\ref{eq:fgs_wavefunc}). 
    \end{algorithmic}
\end{algorithm}

\subsection{A Framework of Error Analysis}\label{sec:FGS-Err}

In this subsection, we introduce a framework of error analysis for applying the FGS algorithm to wave equations. 

We define the high-frequency energy norm of $u$ as follows: 
\begin{equation}
    \Vert u \Vert_E = \frac{1}{k}\left(\Vert\partial_t u \Vert_{L^2} + \Vert \nabla_{\bm{x}} u \Vert_{L^2} \right).
\end{equation}
We use this energy norm defined above to measure the approximation error of the FGS algorithm, that is, 
\begin{align}
    E_0 := \Vert u_{\mathrm{FGS}} - u \Vert_E \leq \Vert u_{\mathrm{FGA}} - u \Vert_E + \Vert u_{\mathrm{FGS}} - u_{\mathrm{FGA}} \Vert_E := E_{\mathrm{FGA}} + E_S. 
\end{align}
Here $E_{\mathrm{FGA}} = \Vert u_{\mathrm{FGA}} - u \Vert_E$ is the asymptotic error of the FGA ansatz and $E_S = \Vert u_{\mathrm{FGS}} - u_{\mathrm{FGA}} \Vert_E$ is the Monte Carlo sampling error. It has been proved  in \cite{LuYa:CPAM} that with proper smoothness assumptions on the sound velocity $c(\bm{x})$ and high-frequency initial conditions (\ref{init_con}), we may arrive at 
\begin{align} \label{eq:error_fga}
    \Vert u_{\mathrm{FGA}} - u \Vert_E \leq C_A(t)k^{-1},
\end{align}
for a given time $t > 0$, where $C_A(t)$ is a constant depending on $t$ but not on $k$. Therefore, our main concern is to analyze the sampling error. 

We define the mean square of sampling error $\mathcal{E}_S$ and of total error $\mathcal{E}_0$ as follows:
\begin{align}
    & \mathcal{E}_S = \mathbb{E} \left\Vert u_{\mathrm{FGA}}(t,\cdot) - u_{\mathrm{FGS}}\left(t,\cdot;M,\left\{ z_0^{(j)} \right\}_{j=1}^M \right) \right\Vert_{E}^2, \quad{\text{and }} \mathcal{E}_0 = \mathbb{E} \left\Vert u(t,\cdot) - u_{\mathrm{FGS}}\left(t,\cdot;M,\left\{ z_0^{(j)} \right\}_{j=1}^M \right) \right\Vert_{E}^2. \nonumber
\end{align}

The following lemma gives a method for estimating the sampling error of applying the FGS algorithm to wave equations regardless of the specific form of the initial conditions and the chosen sampling density functions. 

\begin{lemma} \label{lm:frame_error_anaz}
    Consider the scalar wave equation (\ref{wave_eq}) up to time $t$ with initial conditions (\ref{init_con}) that belong to the Schwartz class. When applying the frozen Gaussian sampling (FGS) with sampling density functions $\pi_{\pm}$, 
    \begin{itemize}
        \item[(1)] the quadratic expectation of the sampling error $E_S$, that is, $\mathcal{E}_S$ follows
        \begin{align}\label{ieq:lm2-1}
            \mathcal{E}_S \leq & \frac{4}{M}\frac{1}{k^2} \left( \int_{\mathbb{R}^d} \mathbb{E} \left|\partial_t \Lambda_\pm (t,\bm{x},z_0)\right|^2 \mathrm{d}\bm{x} +  \int_{\mathbb{R}^d} \mathbb{E} \left|\nabla_{\bm{x}} \Lambda_\pm (t,\bm{x},z_0)\right|^2 \mathrm{d}\bm{x}\right) ,
        \end{align}
        \item[(2)] there exists a positive constant $C_0(t,d)$ depending on time $t$ and dimensionality $d$ but $k$ such that 
        \begin{align} \label{ieq:lm_2}
            & \frac{1}{k^2} \int_{\mathbb{R}^d} \mathbb{E} \left|\partial_t \Lambda_{\pm} (t,\bm{x},z_0)\right|^2 \mathrm{d}\bm{x} + \frac{1}{k^2} \int_{\mathbb{R}^d} \mathbb{E} \left|\nabla_{\bm{x}} \Lambda_{\pm} (t,\bm{x},z_0)\right|^2 \mathrm{d}\bm{x} 
            \leq \frac{C_0(t,d)}{2^{3d} (\pi/k)^{\frac{5d}{2}}} \int_{\mathbb{R}^{2d}} \frac{\left|a_{\pm}(t,z_0)\psi_{\pm}^k(z_0)\right|^2}{\pi_{\pm}(z_0)} \mathrm{d}z_0.
        \end{align}
    \end{itemize}
    whenever $\mathbb{E} \left|\partial_t \Lambda_{\pm} (t,\bm{x},z_0)\right|^2 < \infty$ and $\mathbb{E} \left|\nabla_{\bm{x}} \Lambda_{\pm} (t,\bm{x},z_0)\right|^2 < \infty$. 
\end{lemma}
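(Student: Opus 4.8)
My plan is to treat the two parts in sequence: part~(1) is a Monte Carlo variance estimate phrased in the energy norm, and part~(2) is an explicit computation with the FGA phase and amplitude together with the ray-tracing a priori bounds.

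\emph{Part (1).} The key fact is that $u_{\mathrm{FGS}}$ is an unbiased estimator: for each branch $u_{\mathrm{FGS},\pm}(t,\bm{x};M,\{z_0^{(j)}\}_{j=1}^M) = \frac{1}{M}\sum_{j=1}^M \Lambda_\pm(t,\bm{x},z_0^{(j)})$ is the empirical mean of $M$ i.i.d. copies of $\Lambda_\pm(t,\bm{x},z_0)$, with mean $u_{\mathrm{FGA},\pm}(t,\bm{x})$. First I would expand the energy norm and apply $(a+b)^2 \le 2a^2 + 2b^2$ to bound $\mathcal{E}_S = \mathbb{E}\Vert u_{\mathrm{FGA}} - u_{\mathrm{FGS}} \Vert_E^2$ by $\frac{2}{k^2}\big(\mathbb{E}\Vert \partial_t(u_{\mathrm{FGA}} - u_{\mathrm{FGS}}) \Vert_{L^2}^2 + \mathbb{E}\Vert \nabla_{\bm{x}}(u_{\mathrm{FGA}} - u_{\mathrm{FGS}}) \Vert_{L^2}^2\big)$; then interchange $\mathbb{E}$ with $\int_{\mathbb{R}^d}\mathrm{d}\bm{x}$ by Tonelli; then split $\partial_t(u_{\mathrm{FGA}} - u_{\mathrm{FGS}}) = \sum_{\pm}\partial_t(u_{\mathrm{FGA},\pm} - u_{\mathrm{FGS},\pm})$ and apply $(a+b)^2 \le 2a^2 + 2b^2$ a second time, which avoids having to use independence of the two sample sets. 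For fixed $\bm{x}$, since $\partial_t u_{\mathrm{FGS},\pm} = \frac{1}{M}\sum_{j} \partial_t \Lambda_\pm(t,\bm{x},z_0^{(j)})$ and, by dominated convergence under the stated finiteness hypotheses, $\partial_t u_{\mathrm{FGA},\pm} = \mathbb{E}[\partial_t \Lambda_\pm]$, the quantity $\mathbb{E}|\partial_t(u_{\mathrm{FGA},\pm} - u_{\mathrm{FGS},\pm})|^2$ is the variance of an empirical mean of i.i.d.\ variables, hence equals $\frac{1}{M}\mathrm{Var}(\partial_t \Lambda_\pm(t,\bm{x},z_0)) \le \frac{1}{M}\mathbb{E}|\partial_t \Lambda_\pm(t,\bm{x},z_0)|^2$, and likewise with $\nabla_{\bm{x}}$ in place of $\partial_t$. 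Collecting the two factors of $2$ yields the factor $4$ in \eqref{ieq:lm2-1}.

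\emph{Part (2).} Here I would insert the explicit form of $\Lambda_\pm$ and differentiate. The $(t,\bm{x})$-dependence enters only through $a_\pm, \bm{Q}_\pm, \bm{P}_\pm$ and the factor $e^{\mathrm{i}k\Theta_\pm}$, so the chain rule gives $\nabla_{\bm{x}}\Lambda_\pm = \mathrm{i}k\,\nabla_{\bm{x}}\Theta_\pm\,\Lambda_\pm$ with $\nabla_{\bm{x}}\Theta_\pm = \bm{P}_\pm + \mathrm{i}(\bm{x} - \bm{Q}_\pm)$, and $\partial_t \Lambda_\pm = (2\pi/k)^{-3d/2}\frac{\psi_\pm^k}{\pi_\pm}\big(\partial_t a_\pm + \mathrm{i}k\,a_\pm\,\partial_t\Theta_\pm\big)e^{\mathrm{i}k\Theta_\pm}$ with $\partial_t\Theta_\pm = \partial_t\bm{P}_\pm\cdot(\bm{x} - \bm{Q}_\pm) - \bm{P}_\pm\cdot\partial_t\bm{Q}_\pm - \mathrm{i}(\bm{x} - \bm{Q}_\pm)\cdot\partial_t\bm{Q}_\pm$. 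Using $|e^{\mathrm{i}k\Theta_\pm}|^2 = e^{-k|\bm{x} - \bm{Q}_\pm|^2}$, both $|\nabla_{\bm{x}}\Lambda_\pm|^2$ and $|\partial_t\Lambda_\pm|^2$ take the shape $(2\pi/k)^{-3d}\frac{|\psi_\pm^k|^2}{\pi_\pm^2}\times(\text{a factor at most quadratic in }\bm{x}-\bm{Q}_\pm)\times e^{-k|\bm{x}-\bm{Q}_\pm|^2}$. The coefficients of that polynomial I would bound via the ray equations \eqref{eq:ode_qp}: $|\partial_{\bm{P}_\pm}H_\pm| \le \Vert c \Vert_\infty$, $|\partial_{\bm{Q}_\pm}H_\pm| \le \Vert \nabla c \Vert_\infty |\bm{P}_\pm|$, and conservation of $H_\pm$ along trajectories, $c(\bm{Q}_\pm)|\bm{P}_\pm| = c(\bm{q})|\bm{p}|$, so that $|\bm{P}_\pm(t,z_0)| \lesssim |\bm{p}|$ with a constant depending only on $t$ and on $c$; the factor $\partial_t a_\pm$ is handled by the amplitude ODE \eqref{eq:ode_a} and the standard FGA a priori bounds on $a_\pm, Z_\pm^{-1}, \mathrm{d}Z_\pm/\mathrm{d}t$.

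It remains to integrate over $\bm{x}$ and $z_0$. Writing $\mathbb{E}|\partial_t\Lambda_\pm(t,\bm{x},z_0)|^2 = \int_{\mathbb{R}^{2d}}\pi_\pm(z_0)|\partial_t\Lambda_\pm|^2\,\mathrm{d}z_0$ turns the $\pi_\pm^2$ in the denominator into $\pi_\pm$, and the Gaussian moments $\int_{\mathbb{R}^d}e^{-k|\bm{x}-\bm{Q}_\pm|^2}\,\mathrm{d}\bm{x} = (\pi/k)^{d/2}$, $\int_{\mathbb{R}^d}|\bm{x}-\bm{Q}_\pm|^2 e^{-k|\bm{x}-\bm{Q}_\pm|^2}\,\mathrm{d}\bm{x} = \frac{d}{2k}(\pi/k)^{d/2}$ produce the overall constant $(2\pi/k)^{-3d}(\pi/k)^{d/2} = \big(2^{3d}(\pi/k)^{5d/2}\big)^{-1}$ plus lower-order powers of $1/k$ that are bounded for $k \ge 1$; the explicit $k^2$ coming from $\mathrm{i}k\,\nabla_{\bm{x}}\Theta_\pm$ (resp.\ from $\mathrm{i}k\,a_\pm\partial_t\Theta_\pm$) is exactly cancelled by the $1/k^2$ prefactor in \eqref{ieq:lm_2}. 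This collapses everything to $C_0(t,d)\,\big(2^{3d}(\pi/k)^{5d/2}\big)^{-1}\int_{\mathbb{R}^{2d}}\frac{|a_\pm(t,z_0)\psi_\pm^k(z_0)|^2}{\pi_\pm(z_0)}\,\mathrm{d}z_0$, up to a momentum weight $(1+|\bm{p}|^2)$ inherited from $|\bm{P}_\pm|^2$ and the affine $\bm{x}$-dependence of $\Theta_\pm$. I expect the treatment of this weight, and more broadly the verification that the only $k$-growth comes from the explicit prefactors and not from $\partial_t a_\pm$, $\bm{P}_\pm$ or $\bm{Q}_\pm$, to be the main obstacle; it is resolved either through a uniform-in-$z_0$ amplitude estimate or through the decay of $\psi_\pm^k$ that the concrete sampling densities $\pi_\pm$ of Sections~\ref{sec:ErrGauss}--\ref{sec:ErrWKB} are chosen to respect. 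Finally, the hypotheses $\mathbb{E}|\partial_t\Lambda_\pm|^2 < \infty$ and $\mathbb{E}|\nabla_{\bm{x}}\Lambda_\pm|^2 < \infty$ are precisely what license the differentiation under $\mathbb{E}$ in Part~(1) and the Tonelli interchanges throughout.
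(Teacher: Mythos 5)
Your Part (1) is correct and is essentially the paper's argument: two applications of $(a+b)^2\le 2(a^2+b^2)$ (one for the two branches, one for the two terms of the energy norm, in either order), Tonelli, and the i.i.d./unbiasedness argument reducing the variance of the empirical mean to $\frac{1}{M}$ times the second moment, giving the factor $4/M$. Your Part (2) also follows the paper's computational skeleton exactly: differentiate $\Lambda_\pm$ in $t$ and $\bm{x}$ (your formulas for $\partial_t\Theta_\pm$ and $\nabla_{\bm{x}}\Theta_\pm$ match), use $|e^{\i k\Theta_\pm}|^2=e^{-k|\bm{x}-\bm{Q}_\pm|^2}$, integrate in $\bm{x}$ with the Gaussian moments, and absorb the lower-order powers of $1/k$; the explicit $k$'s cancel against the $1/k^2$ prefactor as you say.

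However, there is a genuine gap at the last step of Part (2), and you have identified but not closed it. After the $\bm{x}$-integration the integrand carries the factor $\sigma(t,z_0)$ built from $|\bm{P}_\pm|^2$, $|\partial_t\bm{Q}_\pm|^2$, $|\partial_t\bm{P}_\pm|^2$ and $|\partial_t a_\pm/a_\pm|^2$. To obtain \eqref{ieq:lm_2} as stated — with the unweighted integral $\int |a_\pm\psi_\pm^k|^2/\pi_\pm\,\d z_0$ and a constant $C_0(t,d)$ independent of $z_0$, valid for arbitrary $\pi_\pm$ — one needs this factor to be bounded uniformly in $z_0$. Your substitute, conservation of $H_\pm$ along the ray equations \eqref{eq:ode_qp} giving $|\bm{P}_\pm(t,z_0)|\lesssim|\bm{p}|$, only yields a bound with an extra momentum weight $(1+|\bm{p}|^2)$ inside the phase-space integral, i.e.\ a strictly weaker inequality than \eqref{ieq:lm_2}; deferring the removal of this weight to ``the decay of $\psi^k$ that the concrete sampling densities respect'' changes the statement being proved, since the lemma is formulated (and later used in Sections \ref{sec:ErrGauss}--\ref{sec:ErrWKB}) with the unweighted integral for general $\pi_\pm$. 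The paper closes this step by citing the a priori FGA bounds of \cite{LuYa:CPAM}: Proposition 3.4 there gives boundedness of $(\bm{Q}_\pm,\bm{P}_\pm)$ and their time derivatives, Lemma 5.1 gives boundedness of $\det(Z_\pm)^{-1}$, and the proportionality \eqref{eq:a_z_prop} then bounds $a_\pm$ and $\partial_t a_\pm$, so that $\sigma(t,z_0)\le C_0(t,d)$ uniformly. You invoke the analogous amplitude bounds for $\partial_t a_\pm$ (which is fine and matches the paper), but the corresponding uniform control of $\bm{P}_\pm$ and $\partial_t\bm{P}_\pm$ is the missing ingredient; without it, or without an explicit reformulation of the lemma with the momentum weight, the stated inequality \eqref{ieq:lm_2} is not established.
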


\begin{proof}
    (1) Notice that
    \begin{align}
        u_{\mathrm{FGA}}^k\left(t,\bm{x}\right)  = u_{\mathrm{FGA},+}^k\left(t,\bm{x}\right) + u_{\mathrm{FGA},-}^k\left(t,\bm{x}\right), 
        \quad\text{and }
        u_{\mathrm{FGS}}^k\left(t,\bm{x}\right)  = u_{\mathrm{FGS},+}^k\left(t,\bm{x}\right) + u_{\mathrm{FGS},-}^k\left(t,\bm{x}\right). \nonumber
    \end{align}
Using the triangle inequality of energy norm gives 
    \begin{align}
        \left\Vert u_{\mathrm{FGS}}^k\left(t,\cdot \right) - u_{\mathrm{FGA}}^k\left(t,\cdot \right) \right\Vert_E & \leq \left\Vert u_{\mathrm{FGS},+}^k\left(t,\cdot \right) - u_{\mathrm{FGA},+}^k\left(t,\cdot \right) \right\Vert_E + \left\Vert u_{\mathrm{FGS},-}^k\left(t,\cdot \right) - u_{\mathrm{FGA},-}^k\left(t,\cdot \right) \right\Vert_E.
    \end{align}
Then
\begin{align}
    \left\Vert u_{\mathrm{FGS}}^k\left(t,\cdot \right) - u_{\mathrm{FGA}}^k\left(t,\cdot \right) \right\Vert_E^2 \leq 2 \left( \left\Vert u_{\mathrm{FGS},+}^k\left(t,\cdot \right) - u_{\mathrm{FGA},+}^k\left(t,\cdot \right) \right\Vert_E^2 + 
    \left\Vert u_{\mathrm{FGS},-}^k\left(t,\cdot \right) - u_{\mathrm{FGA},-}^k\left(t,\cdot \right) \right\Vert_E^2\right).
\end{align}
Taking expectations on both sides, we see
\begin{align} \label{quad_expect}
    \mathbb{E}\left\Vert u_{\mathrm{FGS}}^k\left(t,\cdot \right) - u_{\mathrm{FGA}}^k\left(t,\cdot \right) \right\Vert_E^2 \leq 2 \left( \mathbb{E} \left\Vert u_{\mathrm{FGS},+}^k\left(t,\cdot \right) - u_{\mathrm{FGA},+}^k\left(t,\cdot \right) \right\Vert_E^2  + \right.\nonumber \\
    \left. 
    \mathbb{E} \left\Vert u_{\mathrm{FGS},-}^k\left(t,\cdot \right) - u_{\mathrm{FGA},-}^k\left(t,\cdot \right) \right\Vert_E^2 \right). 
\end{align}
The estimation for $\mathbb{E} \left\Vert u_{\mathrm{FGS},+}^k\left(t,\cdot \right) - u_{\mathrm{FGA},+}^k\left(t,\cdot \right) \right\Vert_E^2$ and $\mathbb{E} \left\Vert u_{\mathrm{FGS},-}^k\left(t,\cdot \right) - u_{\mathrm{FGA},-}^k\left(t,\cdot \right) \right\Vert_E^2$ are exactly the same. For simplicity, we will denote $u_{\mathrm{FGA},\pm}^k$ as $u_{\mathrm{FGA}}$, $u_{\mathrm{FGS},\pm}^k$ as $u_{\mathrm{FGS}}$ and omit the $\pm$ subscripts for other variables in the following derivation.

By the definition of the energy norm,
\begin{align}
   \left\Vert u_{\mathrm{FGS}}\left(t,\cdot \right) - u_{\mathrm{FGA}}\left(t,\cdot \right) \right\Vert_E^2 = \frac{1}{k^2} \big( & \left\Vert\partial_t \left(u_{\mathrm{FGS}}\left(t,\cdot \right) - u_{\mathrm{FGA}}\left(t,\cdot \right)\right)\right\Vert_{L^2} + 
   \left\Vert\nabla_{\bm{x}} \left(u_{\mathrm{FGS}}\left(t,\cdot \right) - u_{\mathrm{FGA}}\left(t,\cdot \right) \right)\right\Vert_{L^2} \big)^2.
\end{align}
Then
\begin{align}
    \left\Vert u_{\mathrm{FGS}}\left(t,\cdot \right) - u_{\mathrm{FGA}}\left(t,\cdot \right) \right\Vert_E^2 \leq \frac{2}{k^2} \left(\left\Vert\partial_t \left(u_{\mathrm{FGS}}\left(t,\cdot \right) - u_{\mathrm{FGA}}\left(t,\cdot \right)\right)\right\Vert_{L^2}^2 + 
    \left\Vert\nabla_{\bm{x}} \left(u_{\mathrm{FGS}}\left(t,\cdot \right) - u_{\mathrm{FGA}}\left(t,\cdot \right) \right)\right\Vert_{L^2}^2 \right).
\end{align}
Taking expectations on both sides of the above inequality, we have
\begin{align}
    \mathbb{E}\left\Vert u_{\mathrm{FGS}}\left(t,\cdot \right) - u_{\mathrm{FGA}}\left(t,\cdot \right) \right\Vert_E^2 \leq \frac{2}{k^2} \left( \mathbb{E} \left\Vert\partial_t \left(u_{\mathrm{FGS}}\left(t,\cdot \right) - u_{\mathrm{FGA}}\left(t,\cdot \right)\right)\right\Vert_{L^2}^2 + \right. \nonumber \\
    \left. \mathbb{E} \left\Vert\nabla_{\bm{x}} \left(u_{\mathrm{FGS}}\left(t,\cdot \right) - u_{\mathrm{FGA}}\left(t,\cdot \right) \right)\right\Vert_{L^2}^2 \right).
\end{align}
Simplify the right-hand side of the above inequality as follows:
\begin{align}
    \mathbb{E} \left\Vert\partial_t \left(u_{\mathrm{FGS}}\left(t,\cdot \right) - u_{\mathrm{FGA}}\left(t,\cdot \right)\right)\right\Vert_{L^2}^2 
    = & \int_{\mathbb{R}^d} \mathbb{E} \left|\partial_t u_{\mathrm{FGS}}(t,\bm{x}) - \partial_t u_{\mathrm{FGA}}(t,\bm{x}) \right|^2 \mathrm{d}\bm{x} \nonumber \\
    = & \int_{\mathbb{R}^d} \mathbb{E} \left| \frac{1}{M} \sum_{j=1}^M \left(\partial_t \Lambda \left(t,\bm{x},z_0^{(j)}\right) - \partial_t u_{\mathrm{FGA}}\left(t,\bm{x}\right) \right) \right|^2 \mathrm{d} \bm{x} \nonumber \\
    = & \frac{1}{M} \int_{\mathbb{R}^d} \mathbb{E} \left| \partial_t \Lambda \left(t,\bm{x},z_0\right) - \partial_t u_{\mathrm{FGA}}\left(t,\bm{x}\right) \right|^2 \mathrm{d} \bm{x} \nonumber \\
    \leq & \frac{1}{M} \int_{\mathbb{R}^d} \mathbb{E} \left| \partial_t \Lambda \left(t,\bm{x},z_0\right) \right|^2 \mathrm{d} \bm{x}.
\end{align}
The last two steps of the above derivation follow from the fact that $z_0^{(j)}$ are independent identically distributed and 
\begin{align}
    \mathbb{E}\left[\partial_t\Lambda(t,\bm{x},z_0)\right] & = \partial_t u_{\mathrm{FGA}}(t,\bm{x}),
\end{align}
that is, the expectation of $\partial_t \Lambda (t, \bm{x}, z_0)$ regarding the random variable $z_0$ is $\partial_t u_{\mathrm{FGA}} (t,\bm{x})$. In other words, $\mathbb{E} \left| \partial_t \Lambda \left(t,\bm{x},z_0\right) - \partial_t u_{\mathrm{FGA}}\left(t,\bm{x}\right) \right|^2$ is the variance of $\partial_t \Lambda (t, \bm{x}, z_0)$, and so we have 
\begin{align}
    \mathbb{E} \left[\left| \partial_t \Lambda \left(t,\bm{x},z_0\right) - \partial_t u_{\mathrm{FGA}}\left(t,\bm{x}\right) \right|^2\right] & = \mathbb{E} \left[\partial_t \Lambda \left(t, \bm{x}, z_0\right)^2\right] - \left(\mathbb{E}\left[\partial_t\Lambda\left(t,\bm{x},z_0\right)\right]\right)^2 
    \leq \mathbb{E} \left[\partial_t \Lambda \left(t, \bm{x}, z_0\right)^2\right].
\end{align}

Similarly,
\begin{align}
    \mathbb{E} \left\Vert\nabla_{\bm{x}} \left(u_{\mathrm{FGS}}\left(t,\cdot \right) - u_{\mathrm{FGA}}\left(t,\cdot \right)\right)\right\Vert_{L^2}^2 
    = & \int_{\mathbb{R}^d} \mathbb{E} \left|\nabla_{\bm{x}} u_{\mathrm{FGS}}(t,\bm{x}) - \nabla_{\bm{x}} u_{\mathrm{FGA}}(t,\bm{x}) \right|^2 \mathrm{d}\bm{x} \nonumber \\
    \leq & \frac{1}{M} \int_{\mathbb{R}^d} \mathbb{E} \left| \nabla_{\bm{x}} \Lambda \left(t,\bm{x},z_0\right) \right|^2 \mathrm{d} \bm{x}.
\end{align}
Therefore, 
\begin{align} \label{quad_expect_sig}
    \mathbb{E}\left\Vert u_{\mathrm{FGS}}\left(t,\cdot \right) - u_{\mathrm{FGA}}\left(t,\cdot \right) \right\Vert_E^2 \leq \frac{2}{M}\frac{1}{k^2}  \left( \int_{\mathbb{R}^d} \mathbb{E} \left| \partial_t \Lambda \left(t,\bm{x},z_0\right) \right|^2 \mathrm{d} \bm{x} + \right. 
    \left. \int_{\mathbb{R}^d} \mathbb{E} \left| \nabla_{\bm{x}} \Lambda \left(t,\bm{x},z_0\right) \right|^2 \mathrm{d} \bm{x} \right) .
\end{align}
Then
\begin{align}
    \mathbb{E}\left\Vert u_{\mathrm{FGS},\pm}^k\left(t,\cdot \right) - u_{\mathrm{FGA},\pm}^k\left(t,\cdot \right) \right\Vert_E^2 \leq \frac{2}{M} \frac{1}{k^2} \left( \int_{\mathbb{R}^d} \mathbb{E} \left| \partial_t \Lambda_{\pm} \left(t,\bm{x},z_0\right) \right|^2 \mathrm{d} \bm{x} + \right. 
    \left. \int_{\mathbb{R}^d} \mathbb{E} \left| \nabla_{\bm{x}} \Lambda_{\pm} \left(t,\bm{x},z_0\right) \right|^2 \mathrm{d} \bm{x} \right) .
\end{align}
and \eqref{ieq:lm2-1} follows.

(2) In the following derivation, we also omit the $\pm$ subscripts in variables. 

First estimate the upper bound of $\frac{1}{k^2} \int_{\mathbb{R}^d} \mathbb{E} \left| \partial_t \Lambda \left(t,\bm{x},z_0\right) \right|^2 \mathrm{d} \bm{x}$. The random variable is 
\begin{align}
    \Lambda(t,\bm{x},z_0) = \frac{1}{(2\pi/k)^{\frac{3d}{2}}} \frac{a(t,z_0)\psi(z_0)}{\pi(z_0)} \exp\left(\mathrm{i}k\bm{P}(t,z_0)\cdot(\bm{x}- \bm{Q}(t,z_0)) - \frac{k}{2} |\bm{x}-\bm{Q}(t,z_0)|^2 \right) .
\end{align}
Taking the derivative of $\Lambda$ with respect to $t$ gives
\begin{align}
    \partial_t \Lambda & (t,\bm{x},z_0) = \frac{1}{(2\pi/k)^{\frac{3d}{2}}} \frac{1}{\pi(z_0)} \exp\left(\mathrm{i}k\bm{P}(t,z_0)\cdot(\bm{x}- \bm{Q}(t,z_0)) - \frac{k}{2} |\bm{x}-\bm{Q}(t,z_0)|^2 \right) \times \nonumber \\
    & \left\{ \partial_t [a(t,z_0)\psi(z_0)] + a(t,z_0)\psi(z_0) \left(\mathrm{i}k \partial_t [\bm{P}(t,z_0)\cdot(\bm{x}-\bm{Q}(t,z_0))] - \frac{k}{2} \partial_t \left[ |\bm{x} - \bm{Q}(t,z_0)|^2 \right] \right) \right\} .
\end{align}
Then 
\begin{multline}
    |\partial_t \Lambda (t,\bm{x},z_0) |^2 = \frac{1}{(2\pi/k)^{3d}} \frac{|a(t,z_0)\psi(z_0)|^2}{|\pi(z_0)|^2} \exp\left(-k|\bm{x}- \bm{Q}(t,z_0)|^2\right) 
    \\\times
    \left| \frac{\partial_t a(t,z_0)}{a(t,z_0)} + \left(\mathrm{i}k \partial_t [\bm{P}(t,z_0)\cdot(\bm{x}-\bm{Q}(t,z_0))] - \frac{k}{2} \partial_t \left[ |\bm{x} - \bm{Q}(t,z_0)|^2 \right] \right) \right|^2 .
\end{multline}
Taking the expectation of $|\partial_t \Lambda (t,\bm{x},z_0) |^2$, we obtain
\begin{multline}
    \frac{1}{k^2} \mathbb{E} \left| \partial_t \Lambda \left(t,\bm{x},z_0\right) \right|^2 = \frac{1}{(2\pi/k)^{3d}} \int_{\mathbb{R}^{2d}} \pi(z_0) \frac{|a(t,z_0)\psi(z_0)|^2}{|\pi(z_0)|^2} \exp\left(-k|\bm{x}- \bm{Q}(t,z_0)|^2\right) \\ \times 
    \left| \frac{1}{k} \frac{\partial_t a(t,z_0)}{a(t,z_0)}  + \left(\mathrm{i} \partial_t [\bm{P}(t,z_0)\cdot(\bm{x}-\bm{Q}(t,z_0))] - \frac{1}{2} \partial_t |\bm{x} - \bm{Q}(t,z_0)|^2 \right) \right|^2 \mathrm{d}z_0 .
\end{multline}
Denote that 
\begin{align}
    \lambda_1(t,\bm{x},z_0) := \left| \frac{1}{k} \frac{\partial_t a(t,z_0)}{a(t,z_0)}  + \left(\mathrm{i} \partial_t [\bm{P}(t,z_0)\cdot(\bm{x}-\bm{Q}(t,z_0))] - \frac{1}{2} \partial_t |\bm{x} - \bm{Q}(t,z_0)|^2 \right) \right|^2.
\end{align}
Then
\begin{align}
    \frac{1}{k^2} \int_{\mathbb{R}^d} \mathbb{E} \left| \partial_t \Lambda \left(t,\bm{x},z_0\right) \right|^2 \mathrm{d} \bm{x} 
    = & \frac{1}{(2\pi/k)^{3d}} \int_{\mathbb{R}^{2d}} \frac{|a(t,z_0)\psi(z_0)|^2}{\pi(z_0)} \left(\int_{\mathbb{R}^d} \lambda_1(t,\bm{x},z_0) \exp\left(-k|\bm{x}- \bm{Q}(t,z_0)|^2\right) \mathrm{d}\bm{x}\right) \mathrm{d}z_0 .
\end{align}
To compute the integral 
$\displaystyle
    \int_{\mathbb{R}^d} \lambda_1(t,\bm{x},z_0) \exp\left(-k|\bm{x}- \bm{Q}(t,z_0)|^2\right) \mathrm{d} \bm{x},
$ 
  one may rewrite $\lambda_1$ as 
\begin{align}
    \lambda_1 & = \bigg|\frac{1}{k} \mathrm{Re}\left(\frac{\partial_t a(t,z_0)}{a(t,z_0)}\right) - \frac{1}{2} \partial_t\left[|\bm{x} - \bm{Q}(t,z_0)|^2\right] + 
    \mathrm{i} \left( \partial_t \left[\bm{P}(t,z_0)\cdot(\bm{x} - \bm{Q}(t,z_0))\right] + \frac{1}{k} \mathrm{Im}\left(\frac{\partial_t a(t,z_0)}{a(t,z_0)}\right)\right)\bigg|^2 \nonumber \\
    & = \left(\frac{1}{k} \mathrm{Re}\left(\frac{\partial_t a(t,z_0)}{a(t,z_0)}\right) - \frac{1}{2}\partial_t\left[|\bm{x} - \bm{Q}(t,z_0)|^2\right]\right)^2 + 
    \left(\partial_t \left[\bm{P}(t,z_0)\cdot(\bm{x} - \bm{Q}(t,z_0))\right] + \frac{1}{k} \mathrm{Im}\left(\frac{\partial_t a(t,z_0)}{a(t,z_0)}\right)\right)^2 \nonumber \\
    & = \frac{1}{k^2}\left|\frac{\partial_t a(t,z_0)}{a(t,z_0)}\right|^2 - \frac{1}{k} \mathrm{Re}\left(\frac{\partial_t a(t,z_0)}{a(t,z_0)}\right) \partial_t\left[|\bm{x} - \bm{Q}(t,z_0)|^2\right] \nonumber \\
    & \hspace{3em} + \frac{1}{4} \left(\partial_t \left[|\bm{x} - \bm{Q}(t,z_0)|^2\right]\right)^2 + \left(\partial_t \left[\bm{P}(t,z_0)\cdot(\bm{x} - \bm{Q}(t,z_0))\right]\right)^2 \nonumber \\
    & \hspace{3em} + \frac{2}{k} \partial_t \left[\bm{P}(t,z_0)\cdot(\bm{x} - \bm{Q}(t,z_0))\right] \mathrm{Im}\left(\frac{\partial_t a(t,z_0)}{a(t,z_0)}\right).
\end{align}
Based on the fact that
\begin{equation} \label{eq:exp_fact}
    \begin{split}
        & \int_{\mathbb{R}^d} \exp\left(-k|\bm{x}- \bm{Q}(t,z_0)|^2\right) \mathrm{d} \bm{x} = \pi^{\frac{d}{2}} k^{-\frac{d}{2}}, \\ 
        & \int_{\mathbb{R}^d} \left(\bm{x} - \bm{Q}(t,z_0)\right) \exp\left(-k|\bm{x}- \bm{Q}(t,z_0)|^2\right) \mathrm{d} \bm{x} = 0, \\ 
        & \int_{\mathbb{R}^d} \left|\bm{x} - \bm{Q}(t,z_0)\right|^2 \exp\left(-k|\bm{x}- \bm{Q}(t,z_0)|^2\right) \mathrm{d} \bm{x} = \frac{d}{2} \pi^{\frac{d}{2}} k^{-\frac{d}{2}-1}, 
    \end{split} 
\end{equation}
we find after some simple calculations that
\begin{align}
    \int_{\mathbb{R}^d} \lambda_1(t,\bm{x},z_0) \exp\left(-k|\bm{x}- \bm{Q}(t,z_0)|^2\right) \mathrm{d} \bm{x} \leq \sigma_1(t,z_0) \pi^{\frac{d}{2}} k^{-\frac{d}{2}} + \sigma_2(t,z_0) \pi^{\frac{d}{2}} k^{-\frac{d}{2}-1} + \sigma_3(t,z_0) \pi^{\frac{d}{2}} k^{-\frac{d}{2}-2},
\end{align}
where
\begin{align}
    & \sigma_1(t,z_0) = |\bm{P}(t,z_0)\cdot \partial_t \bm{Q}(t,z_0) |^2, \nonumber \\
    & \sigma_2(t,z_0) = \frac{d}{2} |\partial_t \bm{P}(t,z_0) |^2 + \frac{d}{2} |\partial_t \bm{Q}(t,z_0) |^2 -2 \mathrm{Im} \left(\frac{\partial_t a(t,z_0)}{ a(t,z_0)}\right) \bm{P}(t,z_0)\cdot \partial_t \bm{Q}(t,z_0), \nonumber \\
    & \sigma_3(t,z_0) = \left|\frac{\partial_t a(t,z_0)}{a(t,z_0)}\right|^2. \nonumber
\end{align}
Therefore,
\begin{align}
    \int_{\mathbb{R}^d} \lambda_1(t,\bm{x},z_0) \exp\left(-k|\bm{x}- \bm{Q}(t,z_0)|^2\right) \mathrm{d} \bm{x} \leq (\sigma_1(t,z_0) + \sigma_2(t,z_0) + \sigma_3(t,z_0)) \pi^{\frac{d}{2}} k^{-\frac{d}{2}}.
\end{align} 
Thus
\begin{align} \label{integral_e_1}
    & \frac{1}{k^2} \int_{\mathbb{R}^d} \mathbb{E} \left| \partial_t \Lambda \left(t,\bm{x},z_0\right) \right|^2 \mathrm{d} \bm{x} 
    \leq \frac{1}{2^{3d}(\pi/k)^{\frac{5d}{2}}} \int_{\mathbb{R}^{2d}} \frac{|a(t,z_0)\psi(z_0)|^2}{\pi(z_0)} (\sigma_1(t,z_0) + \sigma_2(t,z_0) + \sigma_3(t,z_0)) \mathrm{d} z_0.
\end{align}

Next estimate the upper bound of $\frac{1}{k^2} \int_{\mathbb{R}^d} \mathbb{E} \left| \nabla_{\bm{x}} \Lambda \left(t,\bm{x},z_0\right) \right|^2 \mathrm{d} \bm{x}$. Taking the derivative of $\Lambda$ with respect to $\bm{x}$ gives
\begin{multline}
    \nabla_{\bm{x}} \Lambda (t,\bm{x},z_0) =  \frac{1}{(2\pi/k)^{\frac{3d}{2}}} \frac{a(t,z_0)\psi(z_0)}{\pi(z_0)}  \exp\left(\mathrm{i}k\bm{P}(t,z_0)\cdot(\bm{x}- \bm{Q}(t,z_0)) - \frac{k}{2} |\bm{x}-\bm{Q}(t,z_0)|^2 \right) 
    \\\times 
    [ \mathrm{i} k \bm{P}(t,z_0) - k (\bm{x} - \bm{Q}(t,z_0))].
\end{multline}
Then
\begin{align}
    |\nabla_{\bm{x}} \Lambda (t,\bm{x},z_0)|^2 = & \frac{1}{(2\pi/k)^{3d}} \frac{|a(t,z_0)\psi(z_0)|^2}{|\pi(z_0)|^2}  \exp\left( - k |\bm{x}-\bm{Q}(t,z_0)|^2 \right) 
    k^2 \left(|\bm{P}(t,z_0)|^2 + |\bm{x} - \bm{Q}(t,z_0)|^2 \right).
\end{align}
Taking the expectation of $ |\nabla_{\bm{x}} \Lambda (t,\bm{x},z_0)|^2$, we have
\begin{multline}
    \frac{1}{k^2} \mathbb{E} \left| \nabla_{\bm{x}} \Lambda \left(t,\bm{x},z_0\right) \right|^2    = \frac{1}{(2\pi/k)^{3d}} \int_{\mathbb{R}^{2d}} \pi (z_0) \frac{|a(t,z_0)\psi(z_0)|^2}{|\pi(z_0)|^2}  \exp\left( - k |\bm{x}-\bm{Q}(t,z_0)|^2 \right) 
    \\\times\left(|\bm{P}(t,z_0)|^2 + |\bm{x} - \bm{Q}(t,z_0)|^2 \right) \mathrm{d} z_0.
\end{multline}
Denote that 
$\displaystyle
    \lambda_2(t,\bm{x},z_0) := |\bm{P}(t,z_0)|^2 + |\bm{x} - \bm{Q}(t,z_0)|^2.
$
 Then
\begin{align}
    \frac{1}{k^2} \int_{\mathbb{R}^d} \mathbb{E} \left| \nabla_{\bm{x}} \Lambda \left(t,\bm{x},z_0\right) \right|^2 \mathrm{d}\bm{x} 
    = & \frac{1}{(2\pi/k)^{3d}} \int_{\mathbb{R}^{2d}} \frac{|a(t,z_0)\psi(z_0)|^2}{\pi(z_0)} \left(\int_{\mathbb{R}^d} \lambda_2(t,\bm{x},z_0) \exp\left(-k|\bm{x}- \bm{Q}(t,z_0)|^2\right) \mathrm{d}\bm{x}\right) \mathrm{d}z_0.
\end{align}
Again use the fact (\ref{eq:exp_fact}), we obtain
\begin{align}
    \int_{\mathbb{R}^d} \lambda_2(t,\bm{x},z_0) \exp\left(-k|\bm{x}- \bm{Q}(t,z_0)|^2\right) \mathrm{d} \bm{x} = |\bm{P}(t,z_0)|^2 \pi^{\frac{d}{2}} k^{-\frac{d}{2}} + \frac{d}{2} \pi^{\frac{d}{2}} k^{- \frac{d}{2}-1}.
\end{align}
Let 
$\displaystyle
    \sigma_4(t,z_0) = |\bm{P}(t,z_0)|^2 + \frac{d}{2}. 
$
 Then
\begin{align}
    \int_{\mathbb{R}^d} \lambda_2(t,\bm{x},z_0) \exp\left(-k|\bm{x}- \bm{Q}(t,z_0)|^2\right) \mathrm{d} \bm{x} \leq \sigma_4(t,z_0) \pi^{\frac{d}{2}} k^{-\frac{d}{2}}.
\end{align}
Thus 
\begin{align} \label{integral_e_2}
    \frac{1}{k^2} \int_{\mathbb{R}^d} \mathbb{E} \left| \nabla_{\bm{x}} \Lambda \left(t,\bm{x},z_0\right) \right|^2 \mathrm{d} \bm{x} \leq \frac{1 }{2^{3d}(\pi/k)^{\frac{5d}{2}}} \int_{\mathbb{R}^{2d}} \frac{|a(t,z_0)\psi(z_0)|^2}{\pi(z_0)} \sigma_4(t,z_0) \mathrm{d} z_0.
\end{align}

Combining (\ref{integral_e_1}) and (\ref{integral_e_2}), we arrive at
\begin{align}
    & \frac{1}{k^2} \int_{\mathbb{R}^d} \mathbb{E} \left| \partial_t \Lambda \left(t,\bm{x},z_0\right) \right|^2 \mathrm{d} \bm{x} + \frac{1}{k^2} \int_{\mathbb{R}^d} \mathbb{E} \left| \nabla_{\bm{x}} \Lambda \left(t,\bm{x},z_0\right) \right|^2 \mathrm{d} \bm{x} 
    \leq \frac{1}{2^{3d} (\pi/k)^{\frac{5d}{2}}} \int_{\mathbb{R}^{2d}} \frac{|a(t,z_0)\psi(z_0)|^2}{\pi(z_0)}  \sigma(t,z_0) \mathrm{d} z_0,
\end{align}
where $\sigma = \sigma_1 + \sigma_2 + \sigma_3 + \sigma_4$. Since the initial wavefield (\ref{init_con}) belongs to the Schwartz class, it decreases rapidly as $|\bm{q}|+|\bm{p}| \rightarrow \infty$ in the initial decomposition of FGA algorithm. According to Proposition 3.4 of \cite{LuYa:CPAM}, the FGA variables $(\bm{Q}, \bm{P})$ and their derivatives with respect to $t$ are bounded for all $z_0 = (\bm{q},\bm{p})$. Further, according to Lemma 5.1 of \cite{LuYa:CPAM}, ${\mathrm{det}\left(Z\right)}^{-1}$ is bounded. Considering the proportional relationship of $a$ and $\mathrm{det}(Z)$ mentioned in \cite{YaLuFo:13}:
\begin{align} \label{eq:a_z_prop}
    \frac{a^2}{\mathrm{det}\left(Z\right)} = constant,
\end{align}
it follows that the amplitude $a$ and its derivative with respect to $t$ are also bounded for all $z_0$. Therefore, there exists a positive constant $C_0(t,d)$ depending on time $t > 0$ and dimensionality $d$ that for all $z_0$,
\begin{align}
    \sigma(t,z_0) = & | \bm{P}(t,z_0)\cdot \partial_t \bm{Q}(t,z_0) |^2 +  \frac{d}{2} |\partial_t \bm{P}(t,z_0) |^2 + \frac{d}{2} |\partial_t \bm{Q}(t,z_0) |^2  + \left|\frac{\partial_t a(t,z_0)}{a(t,z_0)}\right|^2  \nonumber \\
     & - 2 \mathrm{Im} \left(\frac{\partial_t a(t,z_0) }{ a(t,z_0) }\right) \bm{P}(t,z_0)\cdot \partial_t \bm{Q}(t,z_0) + |\bm{P}(t,z_0)|^2 + \frac{d}{2} \leq C_0(t,d) .
\end{align}
Note that $C_0(t,d)$ is independent of $k$. Thus (\ref{ieq:lm_2}) follows. 
\end{proof}

Lemma \ref{lm:frame_error_anaz} states that, in order to decide a bound for the mean square of the sampling error $\mathcal{E}_S$, we may focus on estimating the integrals $\int_{\mathbb{R}^{2d}} \left|a_{\pm}(t,z_0)\psi_{\pm}^k(z_0)\right|^2 / \pi_{\pm}(z_0) \mathrm{d}z_0$. To do so, we need to decide the sampling density functions $\pi_{\pm}$. The choice for the sampling density functions is related to the initial conditions. In the following cases, we consider Gaussian initial conditions and WKB initial conditions and present how to design suitable sampling functions. Then we use Lemma \ref{lm:frame_error_anaz} to analyze the sampling error in both cases. Notice that in the above lemma, we assume the integrability of $\left|a_{\pm}(t,z_0)\psi_{\pm}^k(z_0)\right|^2 / \pi_{\pm}(z_0)$ with respect to $z_0$ on $\mathbb{R}^{2d}$. In Theorem \ref{thm:gauss_init} and \ref{thm:wkb_init}, we prove the integrability in $d \geq 3$, and in Section \ref{sec:num_ex}, our numerical examples also indicate the integrability in $d = 1$ and $2$. 

The following lemma is also useful in the analysis of the next two sections.

\begin{lemma} \label{lm:a_bounded}
    Consider the FGA variable $a(t,\bm{q},\bm{p})$. For given time $t > 0$, there exists a positive constant $K(t,d)$ depending on $t$ but $k$ such that for any $(\bm{q},\bm{p}) \in \mathbb{R}^{2d}$, 
    \begin{align}
        \left|\frac{a(t,\bm{q},\bm{p})}{a(0,\bm{q},\bm{p})}\right| \leq K(t,d).
    \end{align}
\end{lemma}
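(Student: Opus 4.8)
The plan is to reduce the claimed uniform bound on $|a(t,\bm q,\bm p)/a(0,\bm q,\bm p)|$ to a uniform upper bound on the determinant of the matrix $Z(t,\bm q,\bm p)=\partial_{\bm z}\big(\bm Q(t,\bm q,\bm p)+\i\bm P(t,\bm q,\bm p)\big)$, exactly along the route already used in the proof of Lemma~\ref{lm:frame_error_anaz}. The starting point is the conserved quantity $a^2/\det(Z)$ along the FGA flow recorded in \eqref{eq:a_z_prop} and attributed to \cite{YaLuFo:13}. To pin down its value, I would evaluate at $t=0$: since $\bm Q(0,\bm q,\bm p)=\bm q$ and $\bm P(0,\bm q,\bm p)=\bm p$ we have $\bm Q+\i\bm P=\bm q+\i\bm p$ at $t=0$, and because $\partial_{\bm z}=\partial_{\bm q}-\i\partial_{\bm p}$ the $(j,k)$ entry of $Z(0,\bm q,\bm p)$ equals $(\partial_{\bm q_j}-\i\partial_{\bm p_j})(\bm q_k+\i\bm p_k)=2\delta_{jk}$, so $Z(0,\bm q,\bm p)=2I_d$ and $\det Z(0,\bm q,\bm p)=2^d$. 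Together with $a(0,\bm q,\bm p)=2^{d/2}$ from \eqref{eq:init_a}, this makes the conserved constant $a^2/\det Z$ equal to $1$; hence $|a(t,\bm q,\bm p)|^2=|\det Z(t,\bm q,\bm p)|$ for all $t$ and all $(\bm q,\bm p)$, and
\begin{align*}
\left|\frac{a(t,\bm q,\bm p)}{a(0,\bm q,\bm p)}\right|^2=\frac{|\det Z(t,\bm q,\bm p)|}{2^d}.
\end{align*}

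It then remains to bound $|\det Z(t,\bm q,\bm p)|$ uniformly in $(\bm q,\bm p)$ by a quantity depending only on $t$ and $d$. This is precisely the type of estimate invoked in the proof of Lemma~\ref{lm:frame_error_anaz}: by Proposition~3.4 of \cite{LuYa:CPAM}, the ray-tracing flow \eqref{eq:ode_qp}--\eqref{eq:init_qp} and its derivatives with respect to the initial phase-space point are bounded uniformly on $[0,t]\times\mathbb{R}^{2d}$ and independently of $k$, so the entries of $Z(t,\bm q,\bm p)=\partial_{\bm z}\bm Q+\i\partial_{\bm z}\bm P$ are bounded by some $C_1(t,d)$; Hadamard's inequality then gives $|\det Z(t,\bm q,\bm p)|\le (\sqrt d\,C_1(t,d))^d=:2^d K(t,d)^2$. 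Feeding this into the displayed identity yields $|a(t,\bm q,\bm p)/a(0,\bm q,\bm p)|\le K(t,d)$ with $K(t,d)$ independent of $k$, which is the assertion.

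As an equivalent self-contained route one may instead apply Gr\"onwall directly to \eqref{eq:ode_a}: since $\bm Q,\bm P$ are real, $\frac{d}{dt}\ln|a|=\mathrm{Re}(\dot a/a)=\pm\,\frac{\bm P\cdot\nabla c(\bm Q)}{|\bm P|}+\frac12\,\mathrm{Re}\,\mathrm{tr}(Z^{-1}\dot Z)$; the first term is bounded by $\Vert\nabla c\Vert_\infty$ with no singularity at $\bm p=\bm 0$, and the modulus of the second is controlled once $Z^{-1}$ is bounded (Lemma~5.1 of \cite{LuYa:CPAM}) and $Z,\dot Z$ are bounded (Proposition~3.4 of \cite{LuYa:CPAM}); integrating over $[0,t]$ and exponentiating gives the same bound. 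In either route the only nontrivial ingredient is the uniform-in-$z_0$ control of the Jacobian of the Hamiltonian flow (and of $Z^{-1}$), which is imported verbatim from \cite{LuYa:CPAM}; I therefore expect no real obstacle here beyond matching the sign and normalization conventions of those results, and --- for the first route --- the elementary verification that $Z(0,\bm q,\bm p)=2I_d$, which fixes the conserved constant at exactly $1$.
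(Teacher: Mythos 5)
Your first route is essentially the paper's own proof: the paper likewise invokes the conserved ratio $a^2/\det(Z)$ from \eqref{eq:a_z_prop} together with the uniform boundedness of the flow and of $Z$ (and $\det(Z)^{-1}$) imported from \cite{LuYa:CPAM}, and notes $a(0,\bm q,\bm p)=2^{d/2}$ is constant; you simply make explicit the details the paper leaves implicit (that $Z(0)=2I_d$ fixes the constant at $1$, and the Hadamard bound on $\det Z$). The additional Gr\"onwall argument via \eqref{eq:ode_a} is a fine alternative but is not needed and is not what the paper does.
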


\begin{proof}
    As mentioned in the proof of Lemma \ref{lm:frame_error_anaz}, the amplitude $a$ has a proportional relationship with ${\mathrm{det}\left(Z\right)}^{-1}$  as given in (\ref{eq:a_z_prop}) and thus is bounded for any $(\bm{q},\bm{p})$ given time $t > 0$ (Recall that ${\mathrm{det}\left(Z\right)}^{-1}$  is bounded according to Lemma 5.1 of \cite{LuYa:CPAM}). Note that $a(0,\bm{q},\bm{p}) = 2^{\frac{d}{2}}$ is a constant which does not depend on $(\bm{q},\bm{p})$. Thus the conclusion is obvious. 
\end{proof}

To conclude this section, we briefly summarize the form of initial conditions that will be discussed in the following two sections. In this paper, we mainly consider wave equation (\ref{wave_eq}) with initial conditions in the form of
\begin{equation} \label{init_gen}
    \begin{cases}
        u(0,\bm{x}) = h_1(\bm{x}), \\
        \partial_t u(0,\bm{x}) = k h_2(\bm{x}),
    \end{cases}
\end{equation}
where $h_1(\bm{x})$ and $h_2(\bm{x})$ are either both normalized Gaussian functions or both normalized WKB functions \footnote{The Wentzel–Kramers–Brillouin (WKB) functions arise from the WKB approximation where the wave function is assumed to be an exponential function with amplitude and phase that change slowly. Such a wave function is called a WKB function. In the context of approximating the wave equation, the amplitude and phase of the WKB function are indifferent to the frequency parameter $k$ (See Section \ref{sec:ErrWKB}). }. One may decompose (\ref{wave_eq}) with initial conditions (\ref{init_gen}) into two equations with initial conditions being
\begin{equation} \label{init_gen_1}
    \begin{cases}
        u_1 (0,\bm{x}) = h_1(\bm{x}), \\
        \partial_t u_1(0,\bm{x}) = 0,
    \end{cases}
\end{equation}
and
\begin{equation} \label{init_gen_2}
    \begin{cases}
        u_2 (0,\bm{x}) = 0, \\
        \partial_t u_2(0,\bm{x}) = k h_2(\bm{x}),
    \end{cases}
\end{equation}
respectively. Then the solution of (\ref{wave_eq}) is $u = u_1 + u_2$. Note that in (\ref{init_gen_1}) the $\partial_t u(0,\bm{x})$ term vanishes, thus solving wave equation with initial conditions (\ref{init_gen_1}) by the FGS is very similar to solving Schrödinger equation with corresponding initial conditions, which does not include any initial condition on the $\partial_t u$ (See \cite{Xie2021}). Thus we focus on solving (\ref{wave_eq}) with (\ref{init_gen_2}) here. To be consistent with the notations in Section \ref{sec:FGA-SW}, $f_0^k(\bm{x})$ and $f_1^k(\bm{x})$ are used to denote the initial conditions $u(0,\bm{x})$ and $\partial_t u(0,\bm{x})$ respectively in the following context. 

\section{Frozen Gaussian Sampling with Gaussian Initial Conditions}\label{sec:ErrGauss}

In this section, we discuss the FGS for wave equation (\ref{wave_eq}) with Gaussian initial conditions and analyze the sampling and total error. Without loss of generality, we consider the following Gaussian initial data:
\begin{equation} \label{eq:init_gauss}
    \begin{cases}
        f_0^k(\bm{x})  = 0, \\
        f_1^k(\bm{x})  = k \left(\prod_{j=1}^d a_j \right)^{\frac{1}{4}} (\pi/k)^{-\frac{d}{4}} \exp\left(\mathrm{i}k\bm{\tilde{p}} \cdot (\bm{x} - \bm{\tilde{q}}) \right) \exp\left( - \sum_{j=1}^d \frac{a_j}{2/k} (x_j - \tilde{q_j})^2 \right),
    \end{cases}
\end{equation}
where $\bm{\tilde{q}} \in \mathbb{R}^d$, $\bm{0} \ne \bm{\tilde{p}} \in \mathbb{R}^d$ and $a_j (1\leq j \leq d)$ are positive. 

Plug the initial condition (\ref{eq:init_gauss}) into the expression of $\psi^k_{\pm}$ given in (\ref{fga_ansatz_1}) and (\ref{fga_ansatz_2}):
\begin{align} \label{eq:psi_gauss}
\psi_{\pm}^k(\bm{q},\bm{p})  =~& \int_{\mathbb{R}^d}   \pm \frac{1}{2} \frac{\mathrm{i}}{c(\bm{q})|\bm{p}|}   f_1^k(\bm{y}) e^{-\mathrm{i}k\bm{p}\cdot(\bm{y}-\bm{q})-\frac{k}{2}|\bm{y}-\bm{q}|^2} \mathrm{d} \bm{y} \nonumber \\
 =~&  \pm \frac{1}{2} \frac{\mathrm{i}}{c(\bm{q})|\bm{p}|}  2^{\frac{d}{2}} \pi^{\frac{d}{4}} k^{-\frac{d}{4}} \prod_{j=1}^d \left(\frac{\sqrt{a_j}}{1+a_j}\right)^{\frac{1}{2}} \exp\left(-\frac{(\tilde{p_j}-p_j)^2 + a_j(\tilde{q_j}-q_j)^2}{2(1+a_j)/k}\right) \nonumber \\
& ~\times\exp\left(\frac{\mathrm{i}k(a_j\tilde{q_j}+q_j)(\tilde{p_j} - p_j)}{1+a_j} + \mathrm{i}k(p_jq_j - \tilde{p_j}\tilde{q_j}) \right).
\end{align}
and define
\begin{align}
    \tilde{\psi}^k(\bm{q},\bm{p})  =~& \int_{\mathbb{R}^d} f_1^k(\bm{y}) e^{-\mathrm{i}k\bm{p}\cdot(\bm{y}-\bm{q})-\frac{k}{2}|\bm{y}-\bm{q}|^2} \mathrm{d} \bm{y} \nonumber \\
     =~& 2^{\frac{d}{2}} \pi^{\frac{d}{4}} k^{-\frac{d}{4}} \prod_{j=1}^d \left(\frac{\sqrt{a_j}}{1+a_j}\right)^{\frac{1}{2}} \exp\left(-\frac{(\tilde{p_j}-p_j)^2 + a_j(\tilde{q_j}-q_j)^2}{2(1+a_j)/k}
    \right) \nonumber \\
    & ~\times\exp\left(
    \frac{\mathrm{i}k(a_j\tilde{q_j}+q_j)(\tilde{p_j} - p_j)}{1+a_j} + \mathrm{i}k(p_jq_j - \tilde{p_j}\tilde{q_j}) \right).
\end{align}
Considering the initial amplitude $a(0,\bm{q},\bm{p})$ and the modulus of $a(0,z_0)\tilde{\psi}^k(z_0)$, we choose a probability density function as follows:
\begin{equation} \label{choice_for_pdf}
\pi(z_0) = \frac{1}{\mathcal{Z}} |a(0,z_0)\tilde{\psi}^k(z_0)|,
\end{equation}
and use this density function to generate initial random samples for each wave branch, that is, $\pi_{\pm} = \pi$. Here $\mathcal{Z}$ is a normalization parameter to ensure that $\pi(z_0)$ is a probability density function such that 
    $\displaystyle\int_{\mathbb{R}^{2d}} \pi(z_0) \mathrm{d}z_0 = 1$. 
The integral of $|a(0,z_0)\tilde{\psi}^k(z_0)|$ on the phase space is 
\begin{align} \label{int_phase_space_2}
    \int_{\mathbb{R}^{2d}}|a(0,z)\tilde{\psi}^k(z)|\mathrm{d}z = 2^{2d} \pi^{\frac{5d}{4}} k^{-\frac{5d}{4}} \prod_{j=1}^d \left(\frac{1+a_j}{\sqrt{a_j}}\right)^{\frac{1}{2}}.
\end{align}
Therefore,
\begin{align} \label{eq:normal_gauss}
    \mathcal{Z} = 2^{2d} \pi^{\frac{5d}{4}} k^{-\frac{5d}{4}} \prod_{j=1}^d \left(\frac{1+a_j}{\sqrt{a_j}}\right)^{\frac{1}{2}}.
\end{align}
By choosing probability density function as (\ref{choice_for_pdf}), we actually obtain a multivariate normal distribution:
\begin{align} \label{eq:pdf_gauss}
    \pi(\bm{q},\bm{p}) = (2\pi)^{-d} k^d \prod_{j=1}^d \left( \frac{\sqrt{a_j}}{1+a_j} \right) \exp \left(-\sum_{j=1}^d \frac{(\tilde{p_j}-p_j)^2 + a_j(\tilde{q_j}-q_j)^2}{2(1+a_j)/k} \right).
\end{align}
This density function is easy to sample. Moreover, we will show later in the error analysis that this sampling method also gives satisfactory results about the sampling error.

We now present a rigorous error estimate for the FGS in Gaussian initial data cases in the following theorem. The proof of the theorem is applicable to wave equations with dimensionality $d \geq 3$. In Section \ref{sec:num_ex}, we will manifest through several numerical examples that there are similar results for wave equations with dimensionality $d = 1$ or $d = 2$.

\begin{theorem} \label{thm:gauss_init}
    Consider the $d \geq 3$ dimensional wave equation (\ref{wave_eq}) up to time $t$ with the normalized Gaussian initial conditions (\ref{eq:init_gauss}). Suppose that the velocity function $c(\bm{x})$ is bounded in $\mathbb{R}^d$ with its infimum $c_{\mathrm{inf}} = \inf_{\bm{x}\in \mathbb{R}^d} c(\bm{x}) > 0$. When applying the frozen Gaussian sampling (FGS) with sampling density functions $\pi_{\pm}$ as in (\ref{eq:pdf_gauss}), there exists a positive constant $C(t,d)$ which is independent of $k$ such that 
    \begin{equation} \label{eq:gauss_es}
        \mathcal{E}_S \leq \frac{C(t,d)}{M}, 
    \end{equation}
    and
    \begin{equation} \label{eq:gauss_e0}
        \mathcal{E}_0 \leq (C_A(t)k^{-1})^2 + \frac{C(t,d)}{M} .
    \end{equation}
\end{theorem}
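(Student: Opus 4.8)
The plan is to feed the explicit Gaussian data into Lemma~\ref{lm:frame_error_anaz}, which reduces the entire estimate to controlling a single phase-space integral whose only delicate feature is the $|\bm{p}|^{-1}$ singularity inherited from the Hamiltonian $H_\pm = \pm c(\bm{q})|\bm{p}|$. Concretely, parts~(1) and~(2) of that lemma give $\mathcal{E}_S \le \frac{4}{M}\,\frac{C_0(t,d)}{2^{3d}(\pi/k)^{5d/2}}\,I_k$, where $I_k := \int_{\mathbb{R}^{2d}} |a_\pm(t,z_0)\psi_\pm^k(z_0)|^2/\pi(z_0)\,\mathrm{d}z_0$, so it suffices to show $I_k \le C(t,d)\,(\pi/k)^{5d/2}$ with a constant independent of $k$; then $\mathcal{E}_S \le C'(t,d)/M$, which is \eqref{eq:gauss_es}. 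Since all integrands below are nonnegative, the finiteness of $I_k$ also retroactively validates, via Tonelli's theorem, the hypotheses $\mathbb{E}|\partial_t \Lambda_\pm|^2, \mathbb{E}|\nabla_{\bm{x}}\Lambda_\pm|^2 < \infty$ that Lemma~\ref{lm:frame_error_anaz} requires.

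To bound $I_k$ I would combine four ingredients: the factorization read off from \eqref{eq:psi_gauss}, namely $\psi_\pm^k = \pm\frac{\mathrm{i}}{2c(\bm{q})|\bm{p}|}\tilde{\psi}^k$ so that $|\psi_\pm^k|^2 = |\tilde{\psi}^k|^2/(4c(\bm{q})^2|\bm{p}|^2)$; Lemma~\ref{lm:a_bounded}, which gives $|a_\pm(t,z_0)| \le 2^{d/2}K(t,d)$; the definition $\pi(z_0) = \mathcal{Z}^{-1}|a(0,z_0)\tilde{\psi}^k(z_0)|$ with $a(0,z_0) = 2^{d/2}$; and $c(\bm{q}) \ge c_{\mathrm{inf}} > 0$. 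Together these yield the pointwise bound $|a_\pm(t,z_0)\psi_\pm^k(z_0)|^2/\pi(z_0) \le \frac{K(t,d)^2 \mathcal{Z}}{4 c_{\mathrm{inf}}^2}\,|a(0,z_0)\tilde{\psi}^k(z_0)|/|\bm{p}|^2$, hence $I_k \le \frac{K(t,d)^2 \mathcal{Z}}{4 c_{\mathrm{inf}}^2}\,J_k$ with $J_k := \int_{\mathbb{R}^{2d}} |\bm{p}|^{-2}\,|a(0,z_0)\tilde{\psi}^k(z_0)|\,\mathrm{d}z_0$.

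The integrand of $J_k$ is, up to a $k^{-d/4}$ prefactor, a Gaussian in $(\bm{q},\bm{p})$ of width $O(k^{-1/2})$ centered at $(\tilde{\bm{q}},\tilde{\bm{p}})$ with $\tilde{\bm{p}} \ne \bm{0}$. Integrating out the harmless $\bm{q}$-Gaussian reduces $J_k$ to $C\,k^{-3d/4}\int_{\mathbb{R}^d} |\bm{p}|^{-2} e^{-\beta k|\bm{p}-\tilde{\bm{p}}|^2}\,\mathrm{d}\bm{p}$ for some $\beta > 0$, and I would split this last integral at $|\bm{p}-\tilde{\bm{p}}| = |\tilde{\bm{p}}|/2$: on the inner ball $|\bm{p}| \ge |\tilde{\bm{p}}|/2$, so $|\bm{p}|^{-2} \le 4|\tilde{\bm{p}}|^{-2}$ and the Gaussian integrates to $O(k^{-d/2})$; on the complement one factors out $e^{-\beta k|\tilde{\bm{p}}|^2/8}$ and uses that $\int_{\mathbb{R}^d} |\bm{p}|^{-2} e^{-(\beta k/2)|\bm{p}-\tilde{\bm{p}}|^2}\,\mathrm{d}\bm{p}$ is finite --- which holds precisely because $d \ge 3$ makes $\int_{|\bm{p}|<1} |\bm{p}|^{-2}\,\mathrm{d}\bm{p}$ converge --- leaving an exponentially small remainder. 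Hence $J_k \le C\,k^{-5d/4}$ for $k \ge 1$ with a $k$-independent constant. Plugging in $\mathcal{Z} = 2^{2d}\pi^{5d/4}k^{-5d/4}\prod_j ((1+a_j)/\sqrt{a_j})^{1/2}$ from \eqref{eq:normal_gauss} gives $I_k \le C(t,d)\,k^{-5d/2} = C(t,d)\,(\pi/k)^{5d/2}$ up to constants, which proves \eqref{eq:gauss_es}. For the total error \eqref{eq:gauss_e0} I would decompose $u - u_{\mathrm{FGS}} = (u - u_{\mathrm{FGA}}) + (u_{\mathrm{FGA}} - u_{\mathrm{FGS}})$, use the triangle inequality for $\|\cdot\|_E$, and bound the resulting cross term by Cauchy--Schwarz together with $\mathbb{E}[u_{\mathrm{FGS}}] = u_{\mathrm{FGA}}$ and the AM--GM inequality; combining this with the FGA estimate \eqref{eq:error_fga} and \eqref{eq:gauss_es} yields $\mathcal{E}_0 \le (C_A(t)k^{-1})^2 + C(t,d)/M$ after absorbing numerical constants.

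The main obstacle is the singular integral $J_k$ in the third step: the weight $|\bm{p}|^{-2}$ is exactly the price of squaring the singular factor $\psi_\pm^k$ and dividing by the sampling density, and one must simultaneously show it is integrable against the ever-more-concentrated Gaussian $|a(0,z_0)\tilde{\psi}^k|$ and verify that the bound carries precisely the power $k^{-5d/4}$, so that against $\mathcal{Z} \sim k^{-5d/4}$ and the $k^{5d/2}$ prefactor of Lemma~\ref{lm:frame_error_anaz}(2) all $k$-dependence cancels --- this cancellation being the very reason $\pi$ is chosen proportional to $|a(0,z_0)\tilde{\psi}^k(z_0)|$. Both the dimensional restriction $d \ge 3$ and the $k$-independence of the final constant are genuine consequences of this one estimate; everything else reduces to routine computations with Gaussian moments and the a priori bounds on the FGA variables $(\bm{Q},\bm{P},a)$ already invoked in the proof of Lemma~\ref{lm:frame_error_anaz}.
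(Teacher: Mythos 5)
Your proposal is correct and follows essentially the same route as the paper: reduce via Lemma \ref{lm:frame_error_anaz} and Lemma \ref{lm:a_bounded} to the phase-space integral with the $|\bm{p}|^{-2}$ weight against the Gaussian $|a(0,z_0)\tilde{\psi}^k|$, split the momentum integral near and away from the singularity (you center the ball at $\tilde{\bm{p}}$, the paper at the origin with radius $h/2$ --- an immaterial difference), use $d\geq 3$ for integrability and the concentration at $\tilde{\bm{p}}\neq\bm{0}$ to get the $k^{-5d/4}$ bound, and let $\mathcal{Z}\sim k^{-5d/4}$ cancel the $(\pi/k)^{-5d/2}$ prefactor. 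Your handling of the total error via triangle inequality and absorbing constants matches (and is if anything slightly more explicit than) the paper's ``follows directly from \eqref{eq:error_fga}.''
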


\begin{proof} Omit the $\pm$ subscript for ease of notation. According to Lemma (\ref{lm:frame_error_anaz}) (2) and Lemma (\ref{lm:a_bounded}), 
\begin{align} \label{eq:bg_gauss}
    & \frac{1}{k^2} \int_{\mathbb{R}^d} \mathbb{E} \left| \partial_t \Lambda \left(t,\bm{x},z_0\right) \right|^2 \mathrm{d} \bm{x} + \frac{1}{k^2} \int_{\mathbb{R}^d} \mathbb{E} \left| \nabla_{\bm{x}} \Lambda \left(t,\bm{x},z_0\right) \right|^2 \mathrm{d} \bm{x} \nonumber \\
    \leq & C_0(t,d) \frac{\mathcal{Z}}{2^{3d}(\pi/k)^{\frac{5d}{2}}} \int_{\mathbb{R}^{2d}} \frac{|a(t,z_0)\psi(z_0)|^2}{|a(0,z_0)\tilde{\psi}(z_0)|} \mathrm{d} z_0 
    \nonumber \\
    \leq & C_0(t,d) (K(t,d))^2 \frac{\mathcal{Z}}{2^{3d}(\pi/k)^{\frac{5d}{2}}} \int_{\mathbb{R}^{2d}} \frac{1}{2} \frac{1}{c(\bm{q}) |\bm{p}| } |a(0,z_0)\psi(z_0)| \mathrm{d} z_0.
\end{align}
Now we estimate integral of $ \frac{1}{2} \frac{1}{c(\bm{q}) |\bm{p}| } |a(0,z_0)\psi(z_0)| $ on the phase space. Note that  
\begin{align}
    |a(0,\bm{q},\bm{p})\psi(\bm{q},\bm{p})| & = \frac{1}{2} \frac{1}{c(\bm{q})|\bm{p}|} 2^d \pi^{\frac{d}{4}} k^{-\frac{d}{4}} \prod_{j=1}^d \left(\frac{\sqrt{a_j}}{1+a_j}\right)^{\frac{1}{2}} 
    \exp\left(-\sum_{j=1}^d \frac{(\tilde{p_j}-p_j)^2 + a_j(\tilde{q_j}-q_j)^2}{2(1+a_j)/k}\right).
\end{align}
Since $c(\bm{x}) \geq c_{\mathrm{inf}} > 0$, we have
\begin{align} \label{eq:iqip}
    & I = \int_{\mathbb{R}^{2d}} \frac{1}{2} \frac{1}{c(\bm{q}) |\bm{p}|} |a(0,\bm{q},\bm{p})\psi(\bm{q},\bm{p})| \mathrm{d} \bm{q} \mathrm{d} \bm{p} \nonumber \\
    \leq & \int_{\mathbb{R}^{2d}}\frac{1}{4} \frac{1}{c_{\mathrm{inf}}^2|\bm{p}|^2} 2^d \pi^{\frac{d}{4}} k^{-\frac{d}{4}} \prod_{j=1}^d \left(\frac{\sqrt{a_j}}{1+a_j}\right)^{\frac{1}{2}} \exp\left(-\sum_{j=1}^d \frac{(\tilde{p_j}-p_j)^2 + a_j(\tilde{q_j}-q_j)^2}{2(1+a_j)/k}\right) \mathrm{d} \bm{q} \mathrm{d} \bm{p} \nonumber \\
    = & 2^{d-2} \pi^{\frac{d}{4}} k^{-\frac{d}{4}} \prod_{j=1}^d \left(\frac{\sqrt{a_j}}{1+a_j}\right)^{\frac{1}{2}} \int_{\mathbb{R}^{2d}} \frac{1}{c_{\mathrm{inf}}^2|\bm{p}|^2} \exp\left(-\sum_{j=1}^d \frac{(\tilde{p_j}-p_j)^2 + a_j(\tilde{q_j}-q_j)^2}{2(1+a_j)/k}\right) \mathrm{d} \bm{q} \mathrm{d} \bm{p} \nonumber \\
    =&  2^{d-2} \pi^{\frac{d}{4}} k^{-\frac{d}{4}} \prod_{j=1}^d \left(\frac{\sqrt{a_j}}{1+a_j}\right)^{\frac{1}{2}} \int_{\mathbb{R}^d} \exp\left(-\sum_{j=1}^d \frac{ a_j(\tilde{q_j}-q_j)^2}{2(1+a_j)/k}\right) \mathrm{d} \bm{q} \nonumber \\
    & \hspace{7em} \times \int_{\mathbb{R}^d}  \frac{1}{c_{\mathrm{inf}}^2|\bm{p}|^2} \exp\left(-\sum_{j=1}^d \frac{(\tilde{p_j}-p_j)^2}{2(1+a_j)/k}\right) \mathrm{d} \bm{p} =: k^{-\frac{d}{4}} C_1 I_q I_p,
\end{align}
where we denote
    $C_1 = 2^{d-2} \pi^{\frac{d}{4}} \prod_{j=1}^d \left(\frac{\sqrt{a_j}}{1+a_j}\right)^{\frac{1}{2}}$ 
and
\begin{align}
    I_p = \int_{\mathbb{R}^d}  \frac{1}{c_{\mathrm{inf}}^2|\bm{p}|^2} \exp\left(-\sum_{j=1}^d \frac{(\tilde{p_j}-p_j)^2}{2(1+a_j)/k}\right) \mathrm{d} \bm{p}, \quad
    I_q = \int_{\mathbb{R}^d} \exp\left(-\sum_{j=1}^d \frac{ a_j(\tilde{q_j}-q_j)^2}{2(1+a_j)/k}\right) \mathrm{d} \bm{q}.
\end{align}
Take $h = \left( \sum_{j=1}^d \frac{\tilde{p}_j^2}{1+a_j}\right)^{\frac{1}{2}} > 0$. Split $I_p$ into two parts:
\begin{align}
    I_p =  \left( \int_{B(0,\frac{h}{2})} + \int_{\mathbb{R}^d\backslash B(0,\frac{h}{2})} \right) \frac{1}{c_{\mathrm{inf}}^2|\bm{p}|^2} \exp\left(-\sum_{j=1}^d \frac{(\tilde{p_j}-p_j)^2}{2(1+a_j)/k}\right) \mathrm{d} \bm{p} =: I_{p,1} + I_{p,2}.
\end{align}
For the first part of $I_p$, 
We apply the spherical coordinates transform in the estimation of $I_{p,1}$ and obtain
\begin{align}
    I_{p,1} & \leq \int_{B(0,\frac{h}{2})} \frac{1}{c_{\mathrm{inf}}^2|\bm{p}|^2} \exp \left(-\frac{k}{2} \frac{h^2}{4}\right) \mathrm{d} \bm{p} = c_{\mathrm{inf}}^{-2} \exp\left(-\frac{h^2}{8}k\right) \times \nonumber \\
    & \int_0^{\frac{h}{2}} \int_0^\pi \cdots \int_0^\pi \int_0^{2\pi} r^{d-3} \sin^{d-2}\phi_1  \cdots \sin^{2} \phi_{d-3} \sin \phi_{d-2} \d r \d \phi_1 \cdots \d \phi_{d-2} \d \phi_{d-1}. \nonumber
\end{align}
When $d \geq 3$, the integral on the right-hand side of the above inequality is convergent and its value is independent of $k$. Besides, given any $d > 0$, there is a bound $C_d$ such that $\exp(-x) \leq C_d x^{-\frac{d}{2}}$ for all $x > 0$. Therefore, we obtain 
\begin{align}
    I_{p,1}  \leq \tilde{C}_d c_{\mathrm{inf}}^{-2} \left(\frac{h^2}{8} k\right)^{-\frac{d}{2}} = \tilde{C}_d c_{\mathrm{inf}}^{-2} 2^{\frac{3d}{2}} h^{-d} k^{-\frac{d}{2}},
\end{align}
where $\tilde{C}_d$ is a constant depending on $d$. For the second part of $I_p$,
\begin{align}
    I_{p,2} & \leq \int_{\mathbb{R}^d} \frac{1}{c_{\mathrm{inf}}^2} \frac{4}{h^2} \exp\left(-\sum_{j=1}^d \frac{(\tilde{p_j}-p_j)^2}{2(1+a_j)/k}\right) \mathrm{d} \bm{p} 
    = c_{\mathrm{inf}}^{-2} 2^{\frac{d}{2}+2} \pi^{\frac{d}{2}} h^{-2} k^{-\frac{d}{2}} \prod_{j=1}^d (1+a_j)^{\frac{1}{2}}.
\end{align}
Therefore,
\begin{align}
    I_p =I_{p,1} + I_{p,2} \leq c_{\mathrm{inf}}^{-2} \left(2^{\frac{3d}{2}} h^{-d} \tilde{C}_d + 2^{\frac{d}{2}+2} \pi^{\frac{d}{2}} h^{-2} \prod_{j=1}^d (1+a_j)^{\frac{1}{2}} \right) k^{-\frac{d}{2}} .
\end{align}
Computing $I_q$ gets
\begin{align}
    \int_{\mathbb{R}^d} \exp\left(-\sum_{j=1}^d \frac{ a_j(\tilde{q_j}-q_j)^2}{2(1+a_j)/k}\right) \mathrm{d} \bm{q} = 2^{\frac{d}{2}} \pi^{\frac{d}{2}} k^{-\frac{d}{2}} \prod_{j=1}^d \left(\frac{1+a_j}{a_j}\right)^{\frac{1}{2}}.
\end{align}
Therefore,
\begin{align} 
    I_q I_p \leq \frac{1}{c_{\mathrm{inf}}^2} \left(2^{2d} \pi^{\frac{d}{2}} h^{-d} \tilde{C}_d \prod_{j=1}^d \left(\frac{1+a_j}{a_j}\right)^{\frac{1}{2}} + 2^{d+2} \pi^d h^{-2} \prod_{j=1}^d \left(\frac{1+a_j}{\sqrt{a_j}}\right) \right) k^{-d} =: C_2 k^{-d}.
\end{align}
Then by (\ref{eq:iqip}) one has
\begin{align} \label{eq:int_phase_space_1}
    \int_{\mathbb{R}^d} \int_{\mathbb{R}^d} \frac{1}{2} \frac{1}{c (\bm{q}) |\bm{p}|} |a(0,\bm{q},\bm{p})\psi(\bm{q},\bm{p})| \mathrm{d} \bm{q} \mathrm{d} \bm{p} \leq \tilde{C}_0 k^{-{\frac{5d}{4}}},
\end{align}
where $\tilde{C}_0 = C_1C_2$ are constants which are independent of $k$.

Therefore, together with (\ref{eq:int_phase_space_1}) and (\ref{eq:normal_gauss}), (\ref{eq:bg_gauss}) implies
\begin{align}
    \frac{1}{k^2} \int_{\mathbb{R}^d} \mathbb{E} \left| \partial_t \Lambda \left(t,\bm{x},z_0\right) \right|^2 \mathrm{d} \bm{x} + \frac{1}{k^2} \int_{\mathbb{R}^d} \mathbb{E} \left| \nabla_{\bm{x}} \Lambda \left(t,\bm{x},z_0\right) \right|^2 \mathrm{d} \bm{x} \leq \tilde{C}(t,d).
\end{align}
where we denote 
\begin{align}\label{eq:gauss_tld_c}
    \tilde{C}(t,d) & = C_0(t,d) (K(t,d))^2 \tilde{C}_0 2^{-d} \pi^{-\frac{5d}{4}} \prod_{j=1}^d \left(\frac{1+a_j}{\sqrt{a_j}}\right)^{\frac{1}{2}} \nonumber \\
    & = C_0(t,d) (K(t,d))^2 c_{\mathrm{inf}}^{-2} \left( 2^{2d-2} \pi^{-\frac{d}{2}} h^{-d} \tilde{C}_d \prod_{j=1}^d \left( \frac{1+a_j}{a_j} \right)^{\frac{1}{2}} + 2^d h^{-2} \prod_{j=1}^d \left( \frac{1+a_j}{\sqrt{a_j}} \right) \right).
\end{align}
According to Lemma (\ref{lm:frame_error_anaz}) (1), we obtain
\begin{align}
    \mathbb{E}\left\Vert u_{\mathrm{FGS}}^k\left(t,\cdot \right) - u_{\mathrm{FGA}}^k\left(t,\cdot \right) \right\Vert_E^2 \leq \frac{8 \tilde{C}(t,d)}{M}.
\end{align}
This proves (\ref{eq:gauss_es}), and (\ref{eq:gauss_e0}) follows directly from (\ref{eq:error_fga}). 
\end{proof}

\begin{remark}
    We discuss how $C(t,d)$ in (\ref{eq:gauss_es}) and (\ref{eq:gauss_e0}) depends on the dimensionality $d \geq 3$. Generally, $C(t,d)$ increases exponentially as $d$ increases. The dependence of $C(t,d)$ on $d$ is very similar to the corresponding constant in the error estimate of applying the FGS to Sch\"odinger equations with Gaussian initial data, and we refer to \cite{Xie2021} for a detailed analysis. 
\end{remark}

\begin{remark} \label{rmk:adv_gauss}
    We remark on the advantages of the FGS over other existing methods such as the finite difference method. For simplicity, we ignore the asymptotic error $E_{\mathrm{FGA}} = \Vert u_{\mathrm{FGS}} - u_{\mathrm{FGA}} \Vert_E $ of the FGA ansatz. Then the approximation error $E_0$ of the FGS algorithm can be estimated by the sampling error $E_S$, that is,
    \begin{equation}
        E_0 = \Vert u - u_{\mathrm{FGS}} \Vert_E \approx \left(\mathcal{E}_S\right)^{\frac{1}{2}} \leq \sqrt{\frac{C(t,d)}{M}}. 
    \end{equation}
    In order to guarantee that $E_0 \leq \delta$, the sample size of the FGS algorithm should be more than 
    \begin{equation}
        \mathcal{N}_{\mathrm{FGS}} = \frac{C(t,d)}{\delta^2}, \quad \delta > 0.
    \end{equation}
    The least sample size is independent of the wave number $k$. On the contrary, if we use other mesh-based numerical methods such as the finite difference method, the spatial step length of the mesh should be at least of order $\mathcal{O}(k^{-1})$ to capture the high-frequency wavefield \cite{AlKeBo1974}. For wave equations with dimensionality $d$, the size of the spatial degrees of freedom should be at least 
    \begin{equation} \label{eq:fd_order}
        \mathcal{N}_{\mathrm{FD}} = \mathcal{O}(k^d).
    \end{equation}
    However, the sample size $\mathcal{N}_{\mathrm{FGS}}$ is a $d$-th power of an $\mathcal{O}(1)$ parameter. Therefore, when the wave number $k \gg 1$, there is a great saving on computational costs to solve the wave equation with the FGS compared to other mesh-based methods.  
\end{remark}

\section{Frozen Gaussian Sampling with WKB Initial Conditions}\label{sec:ErrWKB}

In this section, we discuss the FGS for wave equation (\ref{wave_eq}) with WKB initial conditions and analyze the approximation error. Generally, we consider the following WKB initial data:
\begin{equation} \label{eq:init_wkb}
    \begin{cases}
        f_0^k(\bm{x}) = 0, \\
        f_1^k(\bm{x}) = k a_{\mathrm{in}}(\bm{x})\exp\left(\i k S_{\mathrm{in}}(\bm{x})\right),
    \end{cases}
\end{equation}
where $a_{\text{in}}$ and $S_{\mathrm{in}}$ are two real-valued functions on $\mathbb{R}^d$, which are both independent of $k$. The initial amplitude $a_{\mathrm{in}}$ follows 
\begin{align}
    \int_{\mathbb{R}^d} a_{\mathrm{in}}^2(\bm{x}) \mathrm{d} \bm{x} = 1.
\end{align}

In order to determine a suitable sampling density function, we give the following assumptions. 

\begin{assumption} \label{ass:inverse}
    For any $\bm{p}\in \mathbb{R}^d$, there exists at most one $\bm{y} \in \mathbb{R}^d$ s.t. $\nabla S_{\mathrm{in}}(\bm{y}) = \bm{p}$, i.e., $\nabla S_{\mathrm{in}}$ is an injective mapping. The inverse of $\nabla S_{\mathrm{in}}$ is given by $T: \bm{p} \mapsto \bm{y}$. The domain of $T$ is denoted by $\mathcal{D}(T)$, i.e.,
    \begin{align}
        T:\mathcal{D}(T) \rightarrow \mathbb{R}^d, \quad \bm{p} \mapsto \bm{y} \nonumber.
    \end{align} 
\end{assumption}

\begin{assumption} \label{ass:inverse_smooth}
    The inverse mapping T of $\nabla S_{\mathrm{in}}$ is smooth. This implies that $T$ is continuous. Therefore, assume that there exists a positive constant $M_T$ that does not depend on $k$ such that for any $\bm{p} \in \mathcal{D}(T)$,
    \begin{align}
        |T(\bm{p})| \leq M_T|\bm{p}| . \nonumber
    \end{align}
    Let $\bm{y} = T(\bm{p})$, that is, $\bm{p} = \nabla S_{\mathrm{in}}(\bm{y})$, the above inequality is equivalent to
    \begin{align}
        |\bm{y}| \leq M_T|\nabla S_{\mathrm{in}}(\bm{y})| .\nonumber
    \end{align}
\end{assumption}

\begin{assumption} \label{ass:jacobian_bounded}
    For any $\bm{y} \in \mathbb{R}^d$, 
    \begin{align}
        0 < \left| \mathrm{det}\left(\nabla^2 S_{\mathrm{in}}(\bm{y})\right)\right| \leq C^* .\nonumber
    \end{align}
    where $C^*$ is a positive constant that is independent of $k$.
\end{assumption}

The stationary phase method is used to acquire a suitable probability function as well. 
\begin{lemma} \label{lm:stat_phase}
    Let $a(\bm{x})$ and $\Psi(\bm{x})$ be two smooth functions on $\mathbb{R}^d$. Suppose that $a(\bm{x}) \in C^{\infty}_0(\mathbb{R}^d)$ and $\Psi(\bm{x}) \in C^{\infty}(\mathbb{R}^d)$. Suppose that $\nabla \Psi(\bm{x})$ has a finite number of zero points $\{\bm{y}_l\}_{l=1}^N$ on the support of $a$ and $\det\left(\nabla^2 \Psi(\bm{y}_l)\right) \ne 0$ for each zero points $\bm{y}_l$. Consider the highly oscillatory integration:
    \begin{align}
        I_{k} = \int_{\mathbb{R}^d} a(\bm{x}) \exp\left(\i k\Psi(\bm{x})\right) \mathrm{d}\bm{x}.
    \end{align}
    As $k \rightarrow \infty$, $I_{k}$ has the following asymptotic behavior:
    \begin{align}
        I_{k} = (2\pi)^{\frac{d}{2}} k^{-\frac{d}{2}} \sum_{l=1}^{N} \frac{\exp\left(\i k\Psi(\bm{y}_l) + \frac{\i \pi}{4} \mathrm{sgn}\left(\nabla^2 \Psi(\bm{y}_l)\right)\right)}{|\det\nabla^2\Psi(\bm{y}_l)|^{\frac{1}{2}}} \left(a(\bm{y}_l) + \sum_{n=1}^\infty k^{-n} \bm{c}_n \cdot D^{2n} a(\bm{y}_l)\right),
    \end{align}
    where $\mathrm{sgn}\left(\nabla^2\Psi(\bm{y}_l)\right)$ denotes the number of positive eigenvalues of the matrix $\nabla^2 \Psi(\bm{y}_l)$ minus the number of negative eigenvalues of $\nabla^2 \Psi(\bm{y}_l)$, and $\bm{c}_n \in \mathbb{R}^{d^{2n}} (n=1,2,\dots)$ are constant vectors that do not depend on $k$. 
\end{lemma}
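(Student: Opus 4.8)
The plan is to follow the classical three-step recipe for the method of stationary phase: localize with a partition of unity, reduce the phase to a nondegenerate quadratic normal form near each critical point, and evaluate a model Gaussian oscillatory integral. First I would fix $\varepsilon>0$ so small that the balls $B(\bm{y}_l,\varepsilon)$, $l=1,\dots,N$, are pairwise disjoint and each contains no zero of $\nabla\Psi$ other than $\bm{y}_l$, and pick a smooth partition of unity $1=\chi_0+\sum_{l=1}^{N}\chi_l$ on a neighborhood of $\mathrm{supp}\,a$ with $\mathrm{supp}\,\chi_l\subset B(\bm{y}_l,\varepsilon)$, $\chi_l\equiv1$ near $\bm{y}_l$, and $\nabla\Psi\neq\bm{0}$ on $\mathrm{supp}(a\chi_0)$. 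This writes $I_k=I_k^{(0)}+\sum_{l=1}^{N}I_k^{(l)}$ with $I_k^{(l)}=\int_{\mathbb{R}^d}a(\bm{x})\chi_l(\bm{x})e^{\i k\Psi(\bm{x})}\,\d\bm{x}$.

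For the non-stationary term, on the compact set $\mathrm{supp}(a\chi_0)$ one has $|\nabla\Psi|\geq c>0$, so repeated use of the integration-by-parts identity built from the first-order operator $L=\frac{1}{\i k}\,\frac{\nabla\Psi}{|\nabla\Psi|^{2}}\cdot\nabla$, which satisfies $L\,e^{\i k\Psi}=e^{\i k\Psi}$, gives $I_k^{(0)}=O(k^{-m})$ for every $m\in\mathbb{N}$; hence $I_k^{(0)}$ contributes nothing to the asymptotic expansion at any order.

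For each stationary piece, since $\det\nabla^{2}\Psi(\bm{y}_l)\neq0$ the Morse lemma provides, after shrinking $\varepsilon$ if necessary, a smooth diffeomorphism $\bm{x}=\phi_l(\bm{\xi})$ from a neighborhood of $\bm{0}$ onto $B(\bm{y}_l,\varepsilon)$ with $\phi_l(\bm{0})=\bm{y}_l$ and $D\phi_l(\bm{0})=\mathrm{Id}$ such that $\Psi(\phi_l(\bm{\xi}))=\Psi(\bm{y}_l)+\tfrac12\langle\nabla^{2}\Psi(\bm{y}_l)\bm{\xi},\bm{\xi}\rangle$. Pulling back, $I_k^{(l)}=e^{\i k\Psi(\bm{y}_l)}\int_{\mathbb{R}^d}b_l(\bm{\xi})\exp\!\left(\tfrac{\i k}{2}\langle\nabla^{2}\Psi(\bm{y}_l)\bm{\xi},\bm{\xi}\rangle\right)\d\bm{\xi}$, where $b_l=(a\chi_l)\circ\phi_l\cdot|\det D\phi_l|\in C^{\infty}_0(\mathbb{R}^d)$ and, since $\chi_l\equiv1$ and $D\phi_l=\mathrm{Id}$ at $\bm{0}$, $b_l(\bm{0})=a(\bm{y}_l)$.

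The last step evaluates the model integral $J_k^{(l)}:=\int_{\mathbb{R}^d}b_l(\bm{\xi})\exp\!\left(\tfrac{\i k}{2}\langle A\bm{\xi},\bm{\xi}\rangle\right)\d\bm{\xi}$, $A:=\nabla^{2}\Psi(\bm{y}_l)$, by Fourier inversion. Writing $b_l$ as the inverse Fourier transform of $\widehat{b_l}$ and using the exact Gaussian-oscillatory formula $\int_{\mathbb{R}^d}\exp\!\left(\tfrac{\i k}{2}\langle A\bm{\xi},\bm{\xi}\rangle+\i\bm{\xi}\cdot\bm{\eta}\right)\d\bm{\xi}=(2\pi)^{d/2}k^{-d/2}|\det A|^{-1/2}e^{\frac{\i\pi}{4}\mathrm{sgn}\,A}\exp\!\left(-\tfrac{\i}{2k}\langle A^{-1}\bm{\eta},\bm{\eta}\rangle\right)$, one obtains $J_k^{(l)}=(2\pi)^{d/2}k^{-d/2}|\det A|^{-1/2}e^{\frac{\i\pi}{4}\mathrm{sgn}\,A}\big(b_l(\bm{0})+\sum_{n\geq1}k^{-n}\bm{c}_n^{(l)}\cdot D^{2n}b_l(\bm{0})\big)$ after Taylor-expanding $\exp(-\tfrac{\i}{2k}\langle A^{-1}\bm{\eta},\bm{\eta}\rangle)$ in powers of $k^{-1}$ and transforming back, degree-$2n$ monomials in $\bm{\eta}$ becoming components of $D^{2n}$ acting on $b_l$, with the $\bm{c}_n^{(l)}$ independent of $k$. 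Since $A=\nabla^{2}\Psi(\bm{y}_l)$ and $b_l(\bm{0})=a(\bm{y}_l)$, re-expressing the derivatives of $b_l$ at $\bm{0}$ in terms of those of $a$ at $\bm{y}_l$ through the chain rule for $\phi_l$ (the extra coefficients involving only derivatives of $\Psi$) brings the expansion into the stated form, and a Taylor-remainder bound on $b_l$ controls the error after any finite truncation; summing over $l$ and adding the negligible $I_k^{(0)}$ finishes the proof. I expect the main obstacle to be the Morse-lemma bookkeeping — verifying that the normal-form change of variables is smooth with $D\phi_l(\bm{0})=\mathrm{Id}$ (so $b_l(\bm{0})=a(\bm{y}_l)$) and that the $|\det\nabla^{2}\Psi(\bm{y}_l)|^{-1/2}$ factor and the signature $e^{\i\pi\,\mathrm{sgn}(\cdot)/4}$ emerge correctly — while the non-stationary estimate and the Fresnel-type model integral are routine.
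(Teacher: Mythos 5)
Your proposal is correct, but note that the paper does not prove this lemma at all: it simply cites \cite{Evans:98} for the proof, so there is no in-paper argument to compare against. What you wrote is the classical stationary-phase proof found in such references -- localization by a partition of unity, the non-stationary phase estimate via the operator $L=\frac{1}{\i k}\frac{\nabla\Psi}{|\nabla\Psi|^{2}}\cdot\nabla$ and repeated integration by parts, reduction to an exact quadratic phase by the Morse-lemma change of variables with $D\phi_l(\bm{0})=\mathrm{Id}$, and evaluation of the Fresnel model integral by Fourier inversion, which is where the factors $|\det\nabla^{2}\Psi(\bm{y}_l)|^{-1/2}$ and $e^{\frac{\i\pi}{4}\mathrm{sgn}(\nabla^{2}\Psi(\bm{y}_l))}$ arise -- so your route essentially reconstructs the cited proof rather than diverging from the paper. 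Two small remarks: the infinite series in the statement must be read as an asymptotic expansion (finite truncation plus a remainder of the next order), which your Taylor-remainder control handles; and, as your chain-rule step makes explicit, the coefficient vectors genuinely depend on the critical point $\bm{y}_l$ through higher derivatives of $\Psi$, so the paper's notation $\bm{c}_n$ should be understood as $k$-independent (which is all the subsequent error analysis uses) rather than $l$-independent -- your version is, if anything, the more precise statement.
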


A proof of Lemma \ref{lm:stat_phase} can be found in \cite{Evans:98}.

Plug the initial condition (\ref{eq:init_wkb}) into $\psi^k_{\pm}$ given in (\ref{fga_ansatz_1}) and (\ref{fga_ansatz_2}):
\begin{align}
    \psi^k_{\pm} (\bm{q},\bm{p}) & = \int_{\mathbb{R}^d} \pm \frac{1}{2} \frac{\i}{k c(\bm{q}) |\bm{p}|} f_1^k(\bm{y}) \exp\left(\i k\left(-\bm{p}\cdot(\bm{y} - \bm{q}) + \frac{\i}{2} |\bm{y}-\bm{q}|^2\right)\right) \mathrm{d}\bm{y} \nonumber \\
    & = \pm \frac{1}{2} \frac{\i}{c(\bm{q}) |\bm{p}|} \int_{\mathbb{R}^d} a_{\mathrm{in}}(\bm{y}) \exp\left(\i k S_{\mathrm{in}}(\bm{y})\right) \exp\left(\i k\left(-\bm{p}\cdot(\bm{y}- \bm{q}) + \frac{\i}{2}|\bm{y}-\bm{q}|^2\right)\right) \mathrm{d}\bm{y}.
\end{align}
and define
\begin{align}
    \tilde{\psi}^k (\bm{q},\bm{p}) = \int_{\mathbb{R}^d} a_{\mathrm{in}}(\bm{y}) \exp\left(\i k S_{\mathrm{in}}(\bm{y})\right) \exp\left(\i k\left(-\bm{p}\cdot(\bm{y}- \bm{q}) + \frac{\i}{2}|\bm{y}-\bm{q}|^2\right)\right) \mathrm{d}\bm{y}.
\end{align}
Writing $\tilde{\psi}^k$ as an asymptotic form in the stationary phase method and simplifying it based on Assumption (\ref{ass:inverse}) and Assumption(\ref{ass:jacobian_bounded}) gives
\begin{align}
    \tilde{\psi}^k (\bm{q},\bm{p}) & = \int_{\mathbb{R}^d} \left(a_{\mathrm{in}}(\bm{y}) \exp\left(- \frac{k}{2}|\bm{y} - \bm{q}|^2\right)\right) \exp\left(\i k\left(S_{\mathrm{in}}(\bm{y}) - \bm{p}\cdot(\bm{y} - \bm{q})\right)\right) \mathrm{d} \bm{y} \nonumber \\
    & = (2\pi)^{\frac{d}{2}} k^{-\frac{d}{2}} \chi_{\mathcal{D}(T)}(\bm{p}) \frac{\exp\left(\i k(S_{\mathrm{in}}(\bm{y}) - \bm{p}\cdot(\bm{y} - \bm{q})) + \frac{\i \pi}{4} \mathrm{sgn}(\nabla^2 S_{\mathrm{in}}(\bm{y}))\right)}{|\mathrm{det} \nabla^2 S_{\mathrm{in}}(\bm{y})|^{\frac{1}{2}}} \nonumber \\
    & \quad \quad \times \left(g(\bm{y},\bm{q}) + \sum_{n=1}^\infty k^{-n} \bm{c}_n \cdot D^{2n}_{\bm{y}} g(\bm{y},\bm{q}) \right),
\end{align}
where $g(\bm{y},\bm{q}) = a_{\mathrm{in}}(\bm{y}) \exp\left(-\frac{k}{2}|\bm{y}-\bm{q}|^2\right)$ and $\bm{c}_n(n=1,2,\dots)$ are constant vectors that do not depend on $k$. We define
\begin{align}
    \tilde{\psi}^k_{sp}(\bm{q},\bm{p}) = (2\pi)^{\frac{d}{2}} k^{-\frac{d}{2}} \chi_{\mathcal{D}(T)}(\bm{p}) g(\bm{y},\bm{q}) \frac{\exp\left(\i k(S_{\mathrm{in}}(\bm{y}) - \bm{p}\cdot(\bm{y} - \bm{q})) + \frac{\i \pi}{4} \mathrm{sgn}(\nabla^2 S_{\mathrm{in}}(\bm{y}))\right)}{|\mathrm{det} \nabla^2 S_{\mathrm{in}}(\bm{y})|^{\frac{1}{2}}}.
\end{align}
Then
\begin{align}
    \tilde{\psi}^k(\bm{q},\bm{p}) = \tilde{\psi}^k_{sp}(\bm{q},\bm{p}) \times \left(1 + \sum_{n=1}^\infty k^{-n}\frac{\bm{c}_n\cdot D_{\bm{y}}^{2n} g(\bm{y},\bm{q})}{g(\bm{y},\bm{q})}\right).
\end{align}
Considering the initial amplitude $a(0,\bm{q},\bm{p})$ and taking the moduli of $a(0,\bm{q},\bm{p}) \tilde{\psi}^k(\bm{q},\bm{p})$ and $a(0,\bm{q},\bm{p}) \tilde{\psi}_{sp}^k(\bm{q},\bm{p})$, we obtain
\begin{align} \label{eq:mod_1}
    |a(0,\bm{q},\bm{p}) \tilde{\psi}_{sp}^k(\bm{q},\bm{p})| = 2^d \pi^{\frac{d}{2}} k^{-\frac{d}{2}}  \frac{\chi_{\mathcal{D}(T)}(\bm{p})g(\bm{y},\bm{q})}{|\mathrm{det} \nabla^2 S_{\mathrm{in}}(\bm{y})|^{\frac{1}{2}}},
\end{align}
and
\begin{align} \label{eq:mod_2}
    |a(0,\bm{q},\bm{p}) \tilde{\psi}^k(\bm{q},\bm{p})| = 2^d \pi^{\frac{d}{2}} k^{-\frac{d}{2}}  \frac{\chi_{\mathcal{D}(T)}(\bm{p})g(\bm{y},\bm{q})}{|\mathrm{det} \nabla^2 S_{\mathrm{in}}(\bm{y})|^{\frac{1}{2}}} \left|1 + \sum_{n=1}^\infty k^{-n}\frac{\bm{c}_n\cdot D_{\bm{y}}^{2n} g(\bm{y},\bm{q})}{g(\bm{y},\bm{q})}\right|.
\end{align}

We choose a probability density function as follows:
\begin{align} \label{eq:pdf_wkb}
    \pi(\bm{q},\bm{p}) & = \frac{1}{\mathcal{Z}} \frac{|a(0,\bm{q},\bm{p})\tilde{\psi}^k_{sp}(\bm{q},\bm{p})|}{|\mathrm{det} \nabla^2 S_{\mathrm{in}}(\bm{y})|^{\frac{1}{2}}} \nonumber \\
    & = \frac{1}{\mathcal{Z}} 2^d \pi^{\frac{d}{2}} k^{-\frac{d}{2}} \chi_{\mathcal{D}(T)}(\bm{p}) a_{\mathrm{in}}(\bm{y}) \frac{\exp\left(- \frac{k}{2}|\bm{y} - \bm{q}|^2\right)}{|\mathrm{det} \nabla^2 S_{\mathrm{in}}(\bm{y})|},
\end{align}
where $\mathcal{Z}$ is a normalization parameter to ensure $\pi$ is a probability density function.
We shall use the probability density in (\ref{eq:pdf_wkb}) to generate initial samples for each wave branch, that is, $\pi_{\pm} = \pi$.

The following proposition gives a practical method to sample from the probability density $\pi (\bm{q}, \bm{p})$ given in (\ref{eq:pdf_wkb}). This sampling strategy was first proposed and proved in \cite{Xie2021}. We state it here for convenience. 

{
\begin{proposition} \label{pro:pdf_wkb}
    Consider the wave equation (\ref{wave_eq}) up to time $t$ with the normalized WKB initial conditions (\ref{eq:init_wkb}) that satisfy Assumption (\ref{ass:inverse}), (\ref{ass:inverse_smooth}) and (\ref{ass:jacobian_bounded}).
    Suppose that the random variables $\mathcal{Q}$ and $\mathcal{P}$ are sampled from the probability density $\pi \left( \bm{q}, \bm{p} \right)$ given in (\ref{eq:pdf_wkb}), then $\mathcal{Q}$ and $\mathcal{P}$ satisfy:
    \begin{itemize}
        \item[(1)] The marginal distribution of $\mathcal{Q}$ is a distribution with probability density given as
        \begin{align} \label{eq:pdf_q}
        \pi_{\mathcal{Q}}(\bm{q}) = \frac{2^d \pi^{\frac{d}{2}} k^{-\frac{d}{2}}}{\mathcal{Z}} \int_{\mathrm{R}^d} a_{\mathrm{in}}(\bm{y}) \exp\left(-\frac{k}{2} \left|\bm{y} - \bm{q}\right|^2\right) \d \bm{y}
        \end{align}
        where $\bm{y} = T(\bm{p})$, $T$ is the inverse of $\nabla S_{\mathrm{in}}$.
        \item[(2)] The conditional distribution of $\mathcal{Y}=T(\mathcal{P})$ with respect to $\mathcal{Q}$ is a distribution with probability density given as
        \begin{align} \label{eq:pdf_y}
        \pi_{\mathcal{Y}|\mathcal{Q}}(\bm{y}) = \frac{2^d \pi^{\frac{d}{2}} k^{-\frac{d}{2}}}{\mathcal{Z} \, \pi_{\mathcal{Q}}(\bm{q})} a_{\mathrm{in}}(\bm{y}) \exp\left(-\frac{k}{2}\left|\bm{y}-\bm{q}\right|^2\right)
        \end{align}
        \item[(3)] When $a_{\mathrm{in}} $ is Gaussian given as \begin{align}\label{eq:wkb_gauss_amplitude}
        a_{\mathrm{in}}(\bm{x}) = \left(\prod_{j=1}^d a_j\right)^{\frac{1}{4}} \pi^{-\frac{d}{4}} \exp\left(-\frac{1}{2}\sum_{j=1}^d a_j(x_j-\tilde{x}_j)^2\right)
        \end{align}
        where $a_j > 0$ are independent of $k$. The mean of $a_{\mathrm{in}}$ is $\tilde{\bm{x}} = (\tilde{x}_1,\tilde{x}_2,\cdots,\tilde{x}_d)^T$. Then the marginal distribution of $\mathcal{Q}$ is a normal distribution $\mathcal{N}(\mu_1, \Sigma_1)$ where the expectation $\mu_1 = \tilde{\bm{x}}$ and the covariance matrix is given by
        \begin{align} \label{eq:wkb_sigma1}
            \Sigma_1 = \mathrm{diag} \left( \frac{1}{k} + \frac{1}{a_1}, \frac{1}{k} + \frac{1}{a_2}, \cdots, \frac{1}{k} + \frac{1}{a_d} \right),
        \end{align}
        The conditional distribution of $\mathcal{Y}$ with respect to $\mathcal{Q}$ is a normal distribution $\mathcal{N}(\mu_2, \Sigma_2)$ where the expectation vector and the covariance matrix is given by
        \begin{align} \label{eq:wkb_mu2}
            \mu_2 = \left( \frac{a_1 \tilde{x}_1 + k q_1}{a_1 + k}, \frac{a_2 \tilde{x}_2 + k q_2}{a_2 + k}, \cdots, \frac{a_d \tilde{x}_d + k q_d}{a_d + k} \right),
        \end{align}
        and 
        \begin{align} \label{eq:wkb_sigma2}
            \Sigma_2 = \mathrm{diag} \left( \frac{1}{a_1 + k}, \frac{1}{a_2 + k}, \cdots , \frac{1}{a_d + k} \right),
        \end{align}
        The normalization parameter $\mathcal{Z}$ in (\ref{eq:pdf_wkb}) is given by
        \begin{align} \label{eq:wkb_nml}
            \mathcal{Z} = \left(\prod_{j=1}^d \frac{1}{a_j}\right)^{\frac{1}{4}} 2^{2d} \pi^{\frac{5d}{4}} k^{-d}.
        \end{align}
    \end{itemize}
\end{proposition}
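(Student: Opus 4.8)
The plan is to reduce the entire proposition to a single change of variables on the phase space --- replacing the momentum coordinate $\bm{p}$ by the ``physical'' variable $\bm{y}=T(\bm{p})$, equivalently $\bm{p}=\nabla S_{\mathrm{in}}(\bm{y})$. Once this substitution is in place, claims (1) and (2) can be read off directly, and claim (3) follows from a short sequence of elementary Gaussian integrals.

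First I would justify the substitution. By Assumption \ref{ass:inverse} the map $\bm{y}\mapsto\nabla S_{\mathrm{in}}(\bm{y})$ is a bijection from $\mathbb{R}^d$ onto $\mathcal{D}(T)$ with inverse $T$; by Assumption \ref{ass:inverse_smooth} it is smooth, and by Assumption \ref{ass:jacobian_bounded} its Jacobian matrix $\nabla^2 S_{\mathrm{in}}(\bm{y})$ is everywhere invertible, so the change-of-variables formula applies with $\d\bm{p}=|\det\nabla^2 S_{\mathrm{in}}(\bm{y})|\,\d\bm{y}$. Applying this to the density (\ref{eq:pdf_wkb}) of $(\mathcal{Q},\mathcal{P})$, the factor $|\det\nabla^2 S_{\mathrm{in}}(\bm{y})|^{-1}$ is cancelled exactly by the Jacobian, and the indicator $\chi_{\mathcal{D}(T)}(\bm{p})=\chi_{\mathcal{D}(T)}\big(\nabla S_{\mathrm{in}}(\bm{y})\big)$ is identically $1$, so the joint density of $(\mathcal{Q},\mathcal{Y})$ with $\mathcal{Y}=T(\mathcal{P})$ (which is almost surely well defined because $\mathcal{P}$ is supported on $\mathcal{D}(T)$) becomes
\begin{align}
    \tilde{\pi}(\bm{q},\bm{y}) = \frac{2^d\pi^{d/2}k^{-d/2}}{\mathcal{Z}}\,a_{\mathrm{in}}(\bm{y})\exp\left(-\frac{k}{2}|\bm{y}-\bm{q}|^2\right). \nonumber
\end{align}
Integrating $\tilde{\pi}$ over $\bm{y}$ yields (\ref{eq:pdf_q}), which is claim (1); dividing $\tilde{\pi}(\bm{q},\bm{y})$ by the marginal $\pi_{\mathcal{Q}}(\bm{q})$ yields (\ref{eq:pdf_y}), which is claim (2).

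For claim (3) I would insert the Gaussian amplitude (\ref{eq:wkb_gauss_amplitude}) into $\tilde{\pi}$, so that its exponent becomes $-\tfrac{1}{2}\sum_{j=1}^d\big(a_j(y_j-\tilde{x}_j)^2+k(y_j-q_j)^2\big)$ and factorizes over the coordinates. Completing the square in each $y_j$ gives the identity
\begin{align}
    a_j(y_j-\tilde{x}_j)^2+k(y_j-q_j)^2 = (a_j+k)\left(y_j-\frac{a_j\tilde{x}_j+kq_j}{a_j+k}\right)^2 + \frac{a_jk}{a_j+k}(q_j-\tilde{x}_j)^2. \nonumber
\end{align}
The first term on the right shows that, conditionally on $q_j$, the variable $y_j$ is Gaussian with mean $\tfrac{a_j\tilde{x}_j+kq_j}{a_j+k}$ and variance $(a_j+k)^{-1}$, which is (\ref{eq:wkb_mu2})--(\ref{eq:wkb_sigma2}); integrating that term out, the second term shows $q_j$ is marginally Gaussian with mean $\tilde{x}_j$ and variance $\tfrac{a_j+k}{a_jk}=\tfrac{1}{a_j}+\tfrac{1}{k}$, which is $\mu_1=\tilde{\bm{x}}$ and $\Sigma_1$ as in (\ref{eq:wkb_sigma1}). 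Finally $\mathcal{Z}$ is fixed by the normalization $\int_{\mathbb{R}^{2d}}\tilde{\pi}\,\d\bm{q}\,\d\bm{y}=1$: integrating out $\bm{q}$ contributes a factor $(2\pi/k)^{d/2}$, then $\int_{\mathbb{R}^d}a_{\mathrm{in}}(\bm{y})\,\d\bm{y}=2^{d/2}\pi^{d/4}\big(\prod_{j=1}^d a_j\big)^{-1/4}$, and collecting the powers of $2$, $\pi$, $k$ and $\prod_j a_j$ produces (\ref{eq:wkb_nml}).

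The computations in claim (3) are routine Gaussian bookkeeping; I expect the only delicate point --- and hence the main obstacle --- to be the change of variables underlying claims (1) and (2). One must check that $\nabla S_{\mathrm{in}}$ is genuinely a bijection onto $\mathcal{D}(T)$ (Assumption \ref{ass:inverse}) with nonvanishing Jacobian (Assumption \ref{ass:jacobian_bounded}), so that $\d\bm{p}=|\det\nabla^2 S_{\mathrm{in}}(\bm{y})|\,\d\bm{y}$ is valid, and that the pushforward $\tilde{\pi}$ is honestly a probability density on $\mathbb{R}^{2d}$ --- in particular that the law of $\mathcal{Y}$ is absolutely continuous and does not collapse onto a lower-dimensional set --- which is where the smoothness of $T$ from Assumption \ref{ass:inverse_smooth} is needed.
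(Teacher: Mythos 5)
Your proposal is correct: the change of variables $\bm{p}=\nabla S_{\mathrm{in}}(\bm{y})$, whose Jacobian $|\det\nabla^2 S_{\mathrm{in}}(\bm{y})|$ exactly cancels the factor built into (\ref{eq:pdf_wkb}), gives the joint density $\tilde{\pi}(\bm{q},\bm{y})$, from which (1) and (2) are immediate, and your completion of the square and normalization computation reproduce (\ref{eq:wkb_sigma1})--(\ref{eq:wkb_nml}) exactly. The paper itself does not reprove this proposition (it is quoted from \cite{Xie2021}), so there is no in-paper argument to compare against, but your route is the natural one and the assumptions are invoked correctly, with Assumption \ref{ass:inverse} and \ref{ass:jacobian_bounded} (plus smoothness) making $\nabla S_{\mathrm{in}}$ a diffeomorphism onto $\mathcal{D}(T)$ so the pushforward is indeed absolutely continuous.
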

}
According to Proposition \ref{pro:pdf_wkb}, the initial sampling for WKB initial data cases (\ref{eq:init_wkb}) with amplitude (\ref{eq:wkb_gauss_amplitude}) can be formulated into two steps: First, generate $M$ random samples $\left\{ \bm{q}^{(j)} \right\}_{j=1}^M$ from the normal distribution $\mathcal{N}(\mu_1, \Sigma_1)$, where $\mu_1 = \tilde{\bm{x}}$ and $\Sigma_1$ given in (\ref{eq:wkb_sigma1}). Next, for each $\bm{q}^{(j)}$, generate a random sample $\bm{y}^{(j)}$ from the normal distribution $\mathcal{N}(\mu_2, \Sigma_2)$, where $\mu_2$ is given in (\ref{eq:wkb_mu2}) and $\Sigma_2$ is given in (\ref{eq:wkb_sigma2}). Then compute $\bm{p}^{(j)} = \nabla S_{\mathrm{in}} \left( \bm{y}^{(j)} \right)$. As Proposition \ref{pro:pdf_wkb} shows, $z_0^{(j)} = \left( \bm{q}^{(j)}, \bm{p}^{(j)} \right), j = 1,2, \cdots, M$ are independent random samples that follow the probability density $\pi \left( \bm{q}, \bm{p} \right)$ in (\ref{eq:pdf_wkb}). 
{
    When $a_{\mathrm{in}}(\bm{x})$ is not Gaussian, Proposition \ref{pro:pdf_wkb} (3) might not hold. However, one may still sample from the density $\pi(\bm{q},\bm{p})$ given in \ref{eq:pdf_wkb} by other sampling techniques. For instance, for one-dimensional cases, one can calculate the inverse of the marginal distribution function of $q$ and use the inverse transform sampling to generate samples: first generate samples from the uniform distribution on the interval $[0,1]$ and then convert them by the inverse distribution function to obtain the samples that obey the density $\pi_{Q}(q)$. Given any sample $q$, one then may use the inverse transform sampling again to generate samples for $p$. 
}

The following theorem establishes a rigorous error estimate for the FGS in WKB initial data cases. As in the Gaussian initial data cases, the proof of this theorem is also applicable to wave equations with dimensionality $d \geq 3$ and we will show in Section \ref{sec:num_ex} that there are similar results for wave equations with dimensionality $d = 1$ or $d = 2$ through numerical examples.

\begin{theorem} \label{thm:wkb_init}
    Consider the $d \geq 3$ dimensional wave equation (\ref{wave_eq}) up to time $t$ with the normalized WKB initial conditions (\ref{eq:init_wkb}) that satisfy Assumption (\ref{ass:inverse}), (\ref{ass:inverse_smooth}) and (\ref{ass:jacobian_bounded}) and $a_{\mathrm{in}}$ is given in (\ref{eq:wkb_gauss_amplitude}). Suppose that the velocity function $c(\bm{x})$ is bounded in $\mathbb{R}^d$ with its infimum $c_{\mathrm{inf}} = \inf_{\bm{x}\in \mathbb{R}^d} c(\bm{x}) > 0$. When applying the frozen Gaussian sampling (FGS) with sampling density functions $\pi_{\pm}$ as in (\ref{eq:pdf_wkb}), there exists a positive constant $C(t,d)$ that is independent of $k$ such that
    \begin{align} \label{eq:wkb_es}
        \mathcal{E}_S \leq \frac{C(t,d)}{M}k^{\frac{d}{2}},
    \end{align}
    and
    \begin{equation} \label{eq:wkb_e0}
        \mathcal{E}_0 \leq (C_A(t) k^{-1})^2 + \frac{C(t,d)}{M}k^{\frac{d}{2}}.
    \end{equation}
\end{theorem}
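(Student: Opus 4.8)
The plan is to run the same machinery as in the proof of Theorem~\ref{thm:gauss_init}: use Lemma~\ref{lm:frame_error_anaz} to reduce everything to the phase-space integral $\int_{\mathbb{R}^{2d}}|a_\pm(t,z_0)\psi_\pm^k(z_0)|^2/\pi(z_0)\,\mathrm{d}z_0$, and then estimate that integral using the explicit WKB density \eqref{eq:pdf_wkb}. First, Lemma~\ref{lm:frame_error_anaz}(1)--(2) gives
\[
\mathcal{E}_S \le \frac{4}{M}\,\frac{C_0(t,d)}{2^{3d}(\pi/k)^{5d/2}}\int_{\mathbb{R}^{2d}}\frac{|a_\pm(t,z_0)\psi_\pm^k(z_0)|^2}{\pi(z_0)}\,\mathrm{d}z_0 ,
\]
and Lemma~\ref{lm:a_bounded} lets me replace $|a_\pm(t,z_0)|$ by $K(t,d)\,2^{d/2}$. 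Writing $\psi_\pm^k=\pm\frac{\mathrm{i}}{2c(\bm q)|\bm p|}\tilde\psi^k$ and using the stationary-phase representation of $\tilde\psi^k$ from Lemma~\ref{lm:stat_phase}, $\tilde\psi^k=\tilde\psi^k_{sp}\,(1+R_k)$ with $R_k=\sum_{n\ge1}k^{-n}\bm c_n\!\cdot D_{\bm y}^{2n}g(\bm y,\bm q)/g(\bm y,\bm q)$ and $\bm y=T(\bm p)$, the moduli \eqref{eq:mod_1}--\eqref{eq:mod_2} together with \eqref{eq:pdf_wkb} produce, after the $g/|\det\nabla^2 S_{\mathrm{in}}|$ factors telescope,
\[
\frac{|a_\pm(t,z_0)\psi_\pm^k(z_0)|^2}{\pi(z_0)} \le \frac{K(t,d)^2\,2^{d}}{4\,c^2(\bm q)|\bm p|^2}\,\mathcal{Z}\,\pi^{d/2}k^{-d/2}\,\chi_{\mathcal{D}(T)}(\bm p)\,g(\bm y,\bm q)\,|1+R_k|^2 .
\]
This cancellation is precisely what the extra $|\det\nabla^2 S_{\mathrm{in}}|^{-1/2}$ factor in \eqref{eq:pdf_wkb} was designed to make happen; the integrability hypothesis of Lemma~\ref{lm:frame_error_anaz} will be verified a posteriori by the finiteness of the resulting integral when $d\ge3$.

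Next I would change variables $\bm p=\nabla S_{\mathrm{in}}(\bm y)$ on $\mathcal{D}(T)$, legitimate by the injectivity in Assumption~\ref{ass:inverse}, with Jacobian $\mathrm{d}\bm p=|\det\nabla^2 S_{\mathrm{in}}(\bm y)|\,\mathrm{d}\bm y$. Bounding $c(\bm q)\ge c_{\mathrm{inf}}$, $|\det\nabla^2 S_{\mathrm{in}}(\bm y)|\le C^*$ by Assumption~\ref{ass:jacobian_bounded}, and, crucially, $|\bm p|^{-2}=|\nabla S_{\mathrm{in}}(\bm y)|^{-2}\le M_T^2|\bm y|^{-2}$ by Assumption~\ref{ass:inverse_smooth}, the phase-space integral is controlled by a constant times
\[
\frac{C^* M_T^2}{c_{\mathrm{inf}}^2}\,\mathcal{Z}\,\pi^{d/2}k^{-d/2}\int_{\mathbb{R}^d}\!\int_{\mathbb{R}^d}\frac{a_{\mathrm{in}}(\bm y)}{|\bm y|^2}\,e^{-k|\bm y-\bm q|^2/2}\,|1+R_k|^2\,\mathrm{d}\bm q\,\mathrm{d}\bm y .
\]
Doing the $\bm q$-integral first with the substitution $\bm w=\sqrt{k}(\bm y-\bm q)$ turns $|1+R_k|^2$ into a polynomial in $\bm w$ (and, for the Gaussian amplitude \eqref{eq:wkb_gauss_amplitude}, in $\bm y$) of $k$-independent degree whose leading coefficient is $1$, so the $\bm q$-integral is $\le\tilde C\,k^{-d/2}(1+|\bm y|)^{2m}$ with $\tilde C,m$ independent of $k$; the remaining $\bm y$-integral $\int a_{\mathrm{in}}(\bm y)(1+|\bm y|)^{2m}|\bm y|^{-2}\,\mathrm{d}\bm y$ converges exactly because $d\ge3$ makes $|\bm y|^{-2}$ locally integrable near the origin while the Gaussian $a_{\mathrm{in}}$ kills the tail — this is the analogue of the $d\ge3$ restriction in Theorem~\ref{thm:gauss_init}, now coming from the singularity at $\bm p=\bm 0$ in the Hamiltonian pulled back through $T$.

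Putting the pieces together, $\int_{\mathbb{R}^{2d}}|a_\pm(t,z_0)\psi_\pm^k(z_0)|^2/\pi(z_0)\,\mathrm{d}z_0\le C\,\mathcal{Z}\,k^{-d}$, and substituting $\mathcal{Z}$ from \eqref{eq:wkb_nml}, which is of order $k^{-d}$, gives a bound of order $k^{-2d}$. It then remains only to count powers of $k$: the prefactor $C_0(t,d)\,2^{-3d}(\pi/k)^{-5d/2}$ contributes $k^{5d/2}$, the integral contributes $k^{-2d}$, their product is $k^{d/2}$, and Lemma~\ref{lm:frame_error_anaz}(1) yields \eqref{eq:wkb_es}; then \eqref{eq:wkb_e0} follows from \eqref{eq:error_fga} as in Theorem~\ref{thm:gauss_init}.

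I expect the main obstacle to be the uniform control of the stationary-phase remainder $R_k$. Unlike the Gaussian case, where $\pi$ can be chosen exactly proportional to $|a(0,z_0)\tilde\psi^k(z_0)|$ and the sampling is optimal, here $\pi$ is built from only the leading stationary-phase term $\tilde\psi^k_{sp}$ together with an additional $|\det\nabla^2 S_{\mathrm{in}}|^{-1/2}$ factor that is needed so that the explicit two-step sampling recipe of Proposition~\ref{pro:pdf_wkb} is available; one must therefore show that $|1+R_k|^2$ integrates, against the Gaussian weight $e^{-k|\bm y-\bm q|^2/2}$, to a quantity that is $O(1)$ in $k$, which requires handling the asymptotic (non-convergent) series in Lemma~\ref{lm:stat_phase} carefully — in practice by truncating at a finite order with a remainder that is uniformly small after integration. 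This mismatch between $\pi$ and the ``ideal'' importance density $|a(0,z_0)\tilde\psi^k(z_0)|$ is also exactly what produces the extra $k^{d/2}$ growth in \eqref{eq:wkb_es}--\eqref{eq:wkb_e0}, in contrast with the $k$-independent rate of Theorem~\ref{thm:gauss_init}.
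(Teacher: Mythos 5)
Your proposal follows essentially the same route as the paper's proof: the reduction via Lemma~\ref{lm:frame_error_anaz} and Lemma~\ref{lm:a_bounded}, the stationary-phase factorization $\tilde\psi^k=\tilde\psi^k_{sp}(1+R_k)$ with the $|\det\nabla^2 S_{\mathrm{in}}|^{-1/2}$ cancellation built into \eqref{eq:pdf_wkb}, the change of variables $\bm p=\nabla S_{\mathrm{in}}(\bm y)$ with Assumptions~\ref{ass:inverse}--\ref{ass:jacobian_bounded}, the $d\geq 3$ treatment of the $|\bm y|^{-2}$ singularity, the rescaling $\sqrt{k}(\bm y-\bm q)$ to control the remainder uniformly in $k$, and the identical final power count giving $k^{d/2}$. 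The only stylistic difference is that you treat $|1+R_k|^2$ in one piece (flagging truncation of the asymptotic series), while the paper bounds each term $k^n|\bm y-\bm q|^{2n}$ separately and sums termwise — the same step, with the same subtlety, so your argument is correct and matches the paper.
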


\begin{proof} Omit the $\pm$ subscript in the following derivation. By Lemma (\ref{lm:frame_error_anaz}) (2) and Lemma (\ref{lm:a_bounded}),
\begin{align} \label{ieq:est_wkb}
    & \frac{1}{k^2} \int_{\mathbb{R}^d} \mathbb{E}|\partial_t\Lambda(t,\bm{x},z_0)|^2 \mathrm{d}\bm{x} + \frac{1}{k^2} \int_{\mathbb{R}^d} \mathbb{E}|\nabla_{\bm{x}} \Lambda(t,\bm{x},z_0)|^2 \mathrm{d}\bm{x} \nonumber \\
    & \hspace{1em} \leq \frac{C_0(t,d)}{2^{3d} (\pi/k)^{\frac{5d}{2}}} \int_{\mathbb{R}^{2d}} \frac{|a(t,z_0)\psi(z_0)|^2}{\pi(z_0)} \mathrm{d}z_0 \nonumber \\
    & \hspace{1em} \leq C_0(t,d) (K(t,d))^2 \frac{\mathcal{Z}}{2^{3d} (\pi/k)^{\frac{5d}{2}}} \int_{\mathbb{R}^{2d}} \frac{1}{4} \frac{1}{c^2(\bm{q})|\bm{p}|^2} \left|a(0,z_0)\right| \frac{|\tilde{\psi}(z_0)|^2}{|\tilde{\psi}_{sp}(z_0)|} |\mathrm{det} \nabla^2 S_{\mathrm{in}}(\bm{y})|^{\frac{1}{2}} \mathrm{d}z_0.
\end{align}
Recall that $a_{\mathrm{in}}$ is given by (\ref{eq:wkb_gauss_amplitude}), which is a Gaussian amplitude that does not depend on $k$, and \[g(\bm{y}, \bm{q}) = a_{\mathrm{in}}(\bm{y}) \exp\left(-\frac{k}{2} |\bm{y}-\bm{q}|^2\right).\] Therefore, taking the derivatives of $g(\bm{y},\bm{q})$ of order $2n$ with respect to $\bm{y}$ yields
\begin{equation}
    \bm{c}_n \cdot D^{2n}_{\bm{y}} g(\bm{y},\bm{q}) = \mathcal{O}\left(k^{2n} |\bm{y}-\bm{q}|^{2n}\right) g(\bm{y},\bm{q}),
\end{equation}
where $\bm{c}_n$ are constant vectors given in Lemma \ref{lm:stat_phase}. 
Therefore, according to (\ref{eq:mod_1}) and (\ref{eq:mod_2}), 
\begin{align} \label{eq:prf_h_2n}
    \frac{|\tilde{\psi}(z_0)|^2}{|\tilde{\psi}_{sp}(z_0)|} = |\tilde{\psi}_{sp}(z_0)| \times \left(1 + \sum_{n=1}^\infty \mathcal{O} \left(k^{n} |\bm{y} - \bm{q}|^{2n}\right)\right).
\end{align}

First, estimate the integral:
\begin{align}
    I = \int_{\mathbb{R}^{2d}} \frac{1}{4} \frac{1}{c^2(\bm{q})|\bm{p}|^2} \left|a(0,z_0)\tilde{\psi}_{sp}(z_0)\right| |\mathrm{det} \nabla^2 S_{\mathrm{in}}(\bm{y})|^{\frac{1}{2}} \mathrm{d}z_0.
\end{align}
Denote the integrand of $I$ by $I(\bm{q},\bm{p})$, then 
\begin{align}
    I(\bm{q},\bm{p}) & = \frac{1}{4} \frac{1}{c^2(\bm{q})|\bm{p}|^2} \left|a(0,z_0)\tilde{\psi}_{sp}(z_0)\right| |\mathrm{det} \nabla^2 S_{\mathrm{in}}(\bm{y})|^{\frac{1}{2}} \nonumber \\
    & = \frac{1}{4} \frac{1}{c^2(\bm{q})|\bm{p}|^2} 2^d \pi^{\frac{d}{2}} k^{-\frac{d}{2}} \chi_{\mathcal{D}(T)}(\bm{p}) a_{\mathrm{in}}(\bm{y}) \exp\left(-\frac{k}{2}|\bm{y}-\bm{q}|^2\right) \nonumber \\
    & = \left(\prod_{j=1}^d a_j\right)^{\frac{1}{4}} \frac{1}{c^2(\bm{q})|\bm{p}|^2}2^{d-2} \pi^{\frac{d}{4}} k^{-\frac{d}{2}} \chi_{\mathcal{D}(T)}(\bm{p}) \exp\left(-\frac{1}{2}\sum_{j=1}^d a_j \left(y_j - \tilde{x}_j\right)^2 - \frac{k}{2}|\bm{y}-\bm{q}|^2\right) \nonumber \\
    & \leq \tilde{C}_0 k^{-\frac{d}{2}} \frac{1}{|\bm{p}|^2} \chi_{\mathcal{D}(T)}(\bm{p}) \exp\left(-\frac{1}{2}\sum_{j=1}^d a_j \left(y_j - \tilde{x}_j\right)^2 - \frac{k}{2}|\bm{y}-\bm{q}|^2\right) := \tilde{I}(\bm{q},\bm{p}).
\end{align}
where we denote
\begin{align}
    \tilde{C}_0 = \left(\prod_{j=1}^d a_j\right)^{\frac{1}{4}} c_{\mathrm{inf}}^{-2} 2^{d-2} \pi^{\frac{d}{4}}.
\end{align}
Therefore,
\begin{align}
    I = \int_{\mathbb{R}^{2d}} I(\bm{q},\bm{p}) \mathrm{d}\bm{q}\mathrm{d}\bm{p} \leq \int_{\mathbb{R}^{2d}} \tilde{I}(\bm{q},\bm{p}) \mathrm{d}\bm{q}\mathrm{d}\bm{p} := \tilde{I}.
\end{align}
Notice that
\begin{align}
    \tilde{I} = \tilde{C}_0 k^{-\frac{d}{2}} \int_{\mathbb{R}^d \times \mathcal{D}(T)} \frac{1}{|\bm{p}|^2} \exp\left(- \frac{1}{2}\sum_{j=1}^d a_j\left(y_j-\tilde{x}_j\right)^2 - \frac{k}{2}|\bm{y}-\bm{q}|^2\right) \mathrm{d}\bm{q}\mathrm{d}\bm{p}.
\end{align}
Change the variables from $(\bm{q},\bm{p})$ to $(\bm{q},\bm{y})$ so that $\tilde{I}$ can be writen as 
\begin{align}
    \tilde{I} = \tilde{C}_0 k^{-\frac{d}{2}} \int_{\mathbb{R}^{2d}} \frac{1}{|\nabla S_{\mathrm{in}}(\bm{y})|^2} \exp\left(- \frac{1}{2}\sum_{j=1}^d a_j\left(y_j-\tilde{x}_j\right)^2 - \frac{k}{2}|\bm{y}-\bm{q}|^2\right) |\mathrm{det}\nabla^2 S_{\mathrm{in}}(\bm{y})|\mathrm{d}\bm{q}\mathrm{d}\bm{y}.
\end{align}
Using Assumption (\ref{ass:inverse_smooth}) and (\ref{ass:jacobian_bounded}), we get
\begin{align}
    \tilde{I} \leq \tilde{C}_0 k^{-\frac{d}{2}} C^* M_T^2 \int_{\mathbb{R}^{2d}} \frac{1}{|\bm{y}|^2} \exp\left(- \frac{1}{2}\sum_{j=1}^d a_j\left(y_j-\tilde{x}_j\right)^2 - \frac{k}{2}|\bm{y}-\bm{q}|^2\right) \mathrm{d}\bm{q}\mathrm{d}\bm{y}.
\end{align}
Now the key is to estimate the following integral, which is denoted by 
\begin{align}
    \hat{I} := \int_{\mathbb{R}^{2d}} \hat{I}(\bm{q},\bm{y}) \mathrm{d}\bm{q}\mathrm{d}\bm{y} = \int_{\mathbb{R}^{2d}} \frac{1}{|\bm{y}|^2} \exp\left(- \frac{1}{2}\sum_{j=1}^d a_j\left(y_j-\tilde{x}_j\right)^2 - \frac{k}{2}|\bm{y}-\bm{q}|^2\right) \mathrm{d}\bm{q}\mathrm{d}\bm{y},
\end{align}
where 
\begin{align}
    \hat{I}(\bm{q},\bm{y}) := \frac{1}{|\bm{y}|^2} \exp\left(- \frac{1}{2}\sum_{j=1}^d a_j\left(y_j-\tilde{x}_j\right)^2 - \frac{k}{2}|\bm{y}-\bm{q}|^2\right).
\end{align}
First, compute that
\begin{align}
    \int_{\mathbb{R}^d} \hat{I}(\bm{q},\bm{y}) \mathrm{d}\bm{q} = 2^{\frac{d}{2}} k^{-\frac{d}{2}} \frac{1}{|\bm{y}|^2} \exp\left(- \frac{1}{2}\sum_{j=1}^d a_j\left(y_j-\tilde{x}_j\right)^2\right).
\end{align}
Then take $\tilde{a} = \min_{1\leq j \leq d} a_j$. Since $a_j > 0$ for each $j$, $\tilde{a} > 0$.
\begin{align}
    \hat{I} & = \int_{\mathbb{R}^d} \mathrm{d}\bm{y} \int_{\mathbb{R}^d} \hat{I}(\bm{q},\bm{y}) \mathrm{d}\bm{q} = 2^{\frac{d}{2}} k^{-\frac{d}{2}} \int_{\mathbb{R}^d} \frac{1}{|\bm{y}|^2} \exp\left(- \frac{1}{2}\sum_{j=1}^d a_j\left(y_j-\tilde{x}_j\right)^2\right) \mathrm{d}\bm{y} \nonumber \\
    & \leq 2^{\frac{d}{2}} k^{-\frac{d}{2}} \int_{\mathbb{R}^d} \frac{1}{|\bm{y}|^2} \exp\left(-\frac{\tilde{a}}{2}|\bm{y} - \tilde{\bm{x}}|^2\right) \mathrm{d}\bm{y} := 2^{\frac{d}{2}} k^{-\frac{d}{2}} \tilde{I}_y.
\end{align}
Take $h = |\tilde{\bm{x}}| = \left(\sum_{j=1}^d \tilde{x}_j^2\right)^\frac{1}{2}$. Without loss of generality, we assume that $\tilde{\bm{x}} \ne 0$ so that $h > 0$. If $\tilde{\bm{x}} = 0$, we rotate to new coordinates to make $\tilde{\bm{x}}$ nonzero and proceed. Split the integral $\tilde{I}_y$ into two parts, that is,
\begin{align}
    \tilde{I}_y = \left(\int_{B(0,\frac{h}{2})} + \int_{\mathbb{R}^d \backslash B(0,\frac{h}{2})}\right) \frac{1}{|\bm{y}|^2} \exp\left(-\frac{\tilde{a}}{2}|\bm{y} - \tilde{\bm{x}}|^2\right) \mathrm{d}\bm{y} := \tilde{I}_{y,1} + \tilde{I}_{y,2}.
\end{align}
For the first part,
\begin{align} \label{ieq:i_y1}
    \tilde{I}_{y,1} 
    & \leq \int_{B(0,\frac{h}{2})} \frac{1}{|\bm{y}|^2} \exp\left(-\frac{\tilde{a}}{2} \frac{h^2}{4}\right) \mathrm{d}\bm{y}
    \nonumber\\
    & = \exp\left(-\frac{\tilde{a}h^2}{8}\right) 
    \int_0^{\frac{h}{2}} \int_0^\pi \cdots \int_0^\pi \int_0^{2\pi} r^{d-3} \sin^{d-2}\phi_1  \cdots \sin^{2} \phi_{d-3} \sin \phi_{d-2} \d r \d \phi_1 \cdots \d \phi_{n-2} \d \phi_{n-1},
\end{align}
where spherical coordination has been used. As mentioned in the proof of Theorem \ref{thm:gauss_init}, as long as $d \geq 3$, the integral on the right-hand side is convergent and its value is independent of $k$. We also use the fact that there is a bound $C_d$ such that $\exp\left(-x\right) \leq C_d x^{-\frac{d}{2}}$ for any positive $d$ and for all $x > 0$. Consequently, 
\begin{align}
    \tilde{I}_{y,1}  \leq \tilde{C}_d \left(\frac{\tilde{a}h^2}{8} \right)^{-\frac{d}{2}} = \tilde{C}_d 2^{\frac{3d}{2}} h^{-d} \tilde{a}^{-\frac{d}{2}},
\end{align}
where $\tilde{C}_d$ is a constant depending on $d$. For the second part,
\begin{align} \label{ieq:i_y2}
    \tilde{I}_{y,2} & = \int_{\mathbb{R}^d\backslash B(0,\frac{h}{2})} \frac{1}{|\bm{y}|^2} \exp\left(-\frac{\tilde{a}}{2}|\bm{y} - \tilde{\bm{x}}|^2\right) \mathrm{d}\bm{y} 
    \leq \int_{\mathbb{R}^d\backslash B(0,\frac{h}{2})} \frac{4}{h^2} \exp\left(-\frac{\tilde{a}}{2}|\bm{y} - \tilde{\bm{x}}|^2\right) \mathrm{d}\bm{y} \nonumber \\
    & \leq \int_{\mathbb{R}^d} \frac{4}{h^2} \exp\left(-\frac{\tilde{a}}{2}|\bm{y} - \tilde{\bm{x}}|^2\right) \mathrm{d}\bm{y} = 2^{\frac{d}{2}+2} \pi^{\frac{d}{2}} h^{-2} \tilde{a}^{-\frac{d}{2}}.
\end{align}
Combining the results of (\ref{ieq:i_y1}) and (\ref{ieq:i_y2}), we obtain
\begin{align}
    \hat{I} \leq 2^{\frac{d}{2}} k^{-\frac{d}{2}} \tilde{I}_y = 2^{\frac{d}{2}} k^{-\frac{d}{2}} (\tilde{I}_{y,1} + \tilde{I}_{y,2}) \leq \left(2^{2d} \tilde{C}_d h^{-d} + 2^{d+2} \pi^{\frac{d}{2}} h^{-2}\right) \tilde{a}^{-\frac{d}{2}} k^{-\frac{d}{2}}.
\end{align}
Therefore,
\begin{align}
    I \leq \tilde{I} & \leq \tilde{C}_0 k^{-\frac{d}{2}} C^* M_T^2 \hat{I} 
    \leq \left(\prod_{j=1}^d a_j\right)^{\frac{1}{4}} c_{\mathrm{inf}}^{-2} \left(2^{2d+1} \pi^{\frac{d}{4}} \tilde{C}_d h^{-d} + 2^{d+3} \pi^{\frac{3d}{4}} h^{-2} \right) C^* M_T^2 \tilde{a}^{-\frac{d}{2}} k^{-d}.
\end{align}
Denote
\begin{equation} \label{eq:wkb_c0}
    C_0^* = \left(\prod_{j=1}^d a_j\right)^{\frac{1}{4}} c_{\mathrm{inf}}^{-2} \left(2^{2d+1} \pi^{\frac{d}{4}} \tilde{C}_d h^{-d} + 2^{d+3} \pi^{\frac{3d}{4}} h^{-2} \right) C^* M_T^2 \tilde{a}^{-\frac{d}{2}}.
\end{equation}
Then
\begin{align} \label{eq:int_ord1}
    I = \int_{\mathbb{R}^{2d}} \frac{1}{4} \frac{1}{c^2(\bm{q})|\bm{p}|^2} \left|a(0,z_0)\tilde{\psi}_{sp}(z_0)\right| |\mathrm{det} \nabla^2 S_{\mathrm{in}}(\bm{y})|^{\frac{1}{2}} \mathrm{d}z_0 \leq C_0^* k^{-d},
\end{align}
where $C_0^*$ is a constant that is independent of $k$, that is, $I$ converges and has a bound of order $\mathcal{O}(k^{-d})$.

Now consider the integral
\begin{align} 
    \int_{\mathbb{R}^{2d}} \frac{1}{4} \frac{1}{c^2(\bm{q})|\bm{p}|^2} \left|a(0,z_0)\tilde{\psi}_{sp}(z_0)\right| |\mathrm{det} \nabla^2 S_{\mathrm{in}}(\bm{y})|^{\frac{1}{2}} \times k^n |\bm{y} - \bm{q}|^{2n} \mathrm{d}z_0.
\end{align}
Let us write the integrand as:
\begin{align} \label{eq:int_ordn}
    & \frac{1}{4} \frac{1}{c^2(\bm{q})|\bm{p}|^2} \left|a(0,z_0)\tilde{\psi}_{sp}(z_0)\right| |\mathrm{det} \nabla^2 S_{\mathrm{in}}(\bm{y})|^{\frac{1}{2}} \times k^n |\bm{y} - \bm{q}|^{2n} \nonumber\\
    = & \frac{1}{4} \frac{1}{c^2(\bm{q})|\bm{p}|^2} 2^d \pi^{\frac{d}{2}} k^{-\frac{d}{2}} \chi_{\mathcal{D}(T)}(\bm{p}) a_{\mathrm{in}}(\bm{y}) \exp\left(-\frac{k}{2}|\bm{y}-\bm{q}|^2\right) \times  k^n |\bm{y} - \bm{q}|^{2n} \nonumber \\
    = & \frac{1}{4} \frac{1}{c^2(\bm{q})|\bm{p}|^2} 2^d \pi^{\frac{d}{2}} k^{-\frac{d}{2}} \chi_{\mathcal{D}(T)}(\bm{p})  a_{\mathrm{in}}(\bm{y}) \exp\left(-\frac{k}{4}|\bm{y}-\bm{q}|^2\right) \times  k^n |\bm{y} - \bm{q}|^{2n} \exp\left(-\frac{k}{4}|\bm{y}-\bm{q}|^2\right). 
\end{align}
Apply a change of variables $\bar{\bm{y}} = \sqrt{k} \bm{y}$, $\bar{\bm{q}} = \sqrt{k} \bm{q}$ as follows:
\begin{equation} \label{eq:prf_wkb_chg}
    k^n |\bm{y} - \bm{q}|^{2n} \exp\left(-\frac{k}{4}|\bm{y}-\bm{q}|^2\right) = |\bar{\bm{y}} - \bar{\bm{q}}|^{2n} \exp\left(-\frac{1}{4}|\bar{\bm{y}}-\bar{\bm{q}}|^2\right).
\end{equation}
Since the right-hand side of (\ref{eq:prf_wkb_chg}) has a bound that is irrelevant to $k$ for all $(\bar{\bm{q}},\bar{\bm{y}}) \in \mathbb{R}^{2d}$, so has the left-hand side of (\ref{eq:prf_wkb_chg}) a bound that is independent of $k$ for all $(\bm{q},\bm{y}) \in \mathbb{R}^{2d}$, that is, 
\begin{align}
    \left \Vert k^n |\bm{y} - \bm{q}|^{2n} \exp\left(-\frac{k}{4}|\bm{y}-\bm{q}|^2\right) \right \Vert_{L^{\infty}(\mathbb{R}^{2d})} = \left \Vert |\bar{\bm{y}} - \bar{\bm{q}}|^{2n} \exp\left(-\frac{1}{4}|\bar{\bm{y}}-\bar{\bm{q}}|^2\right) \right \Vert_{L^{\infty}(\mathbb{R}^{2d})} 
\end{align}
does not depend on $k$. Moreover, similar to the deduction of the boundedness in (\ref{eq:int_ord1}), 
\begin{align}
    \int_{\mathbb{R}^{2d}} \frac{1}{4} \frac{1}{c^2(\bm{q})|\bm{p}|^2} 2^d \pi^{\frac{d}{2}} k^{-\frac{d}{2}} \chi_{\mathcal{D}(T)}(\bm{p}) a_{\mathrm{in}}(\bm{y}) \exp\left(-\frac{k}{4}|\bm{y}-\bm{q}|^2\right) \d z_0
\end{align}
also has an upper bound of order $\mathcal{O}(k^{-d})$. Therefore, according to (\ref{eq:int_ordn}) we deduce that
\begin{align}
    \int_{\mathbb{R}^{2d}} \frac{1}{4} \frac{1}{c^2(\bm{q})|\bm{p}|^2} \left|a(0,z_0)\tilde{\psi}_{sp}(z_0)\right| |\mathrm{det} \nabla^2 S_{\mathrm{in}}(\bm{y})|^{\frac{1}{2}} \times k^n |\bm{y} - \bm{q}|^{2n} \d z_0 < \infty,
\end{align}
and it has an upper bound of order $\mathcal{O}(k^{-d})$.

Now consider the following:
\begin{align}\label{eq:uni_conv}
    \int_{\mathbb{R}^{2d}} \sum_{n=0}^\infty \left(\frac{1}{4} \frac{1}{c^2(\bm{q})|\bm{p}|^2} \left|a(0,z_0) \tilde{\psi}_{sp}(z_0) \right| \left|\mathrm{det} \nabla^2 S_{\mathrm{in}}(\bm{y})\right|^{\frac{1}{2}} \times k^n |\bm{y} - \bm{q}|^{2n} \right) \mathrm{d}z_0.
\end{align}
As the integrand of (\ref{eq:uni_conv}) converges uniformly, it can be integrated termwise, that is, 
\begin{align}
    & \int_{\mathbb{R}^{2d}} \sum_{n=0}^\infty \left(\frac{1}{4} \frac{1}{c^2(\bm{q})|\bm{p}|^2} \left|a(0,z_0) \tilde{\psi}_{sp}(z_0) \right| \left|\mathrm{det} \nabla^2 S_{\mathrm{in}}(\bm{y})\right|^{\frac{1}{2}} \times k^n |\bm{y} - \bm{q}|^{2n} \right) \mathrm{d}z_0 \nonumber \\
    = & \sum_{n=0}^\infty \int_{\mathbb{R}^{2d}}  \frac{1}{4} \frac{1}{c^2(\bm{q})|\bm{p}|^2} \left|a(0,z_0) \tilde{\psi}_{sp}(z_0) \right| \left|\mathrm{det} \nabla^2 S_{\mathrm{in}}(\bm{y})\right|^{\frac{1}{2}} \times k^n |\bm{y} - \bm{q}|^{2n} \mathrm{d}z_0.
\end{align}
Besides, each term of the series is convergent and has a bound of order $\mathcal{O}(k^{-d})$, which entails that the series itself also has a bound of the same order. 

Consequently, combined with (\ref{eq:prf_h_2n}), there must be a constant $\tilde{C}$ that is irrelevant to $k$ such that 
\begin{align}
    \int_{\mathbb{R}^{2d}} \frac{1}{4} \frac{1}{c^2(\bm{q})|\bm{p}|^2} \left|a(0,z_0)\right| \frac{|\tilde{\psi}(z_0)|^2}{|\tilde{\psi}_{sp}(z_0)|} |\mathrm{det} \nabla^2 S_{\mathrm{in}}(\bm{y})|^{\frac{1}{2}} \mathrm{d}z_0 \leq \tilde{C} k^{-d} .
\end{align} 
Provided that the normalization parameter is given in (\ref{eq:wkb_nml}), it follows from (\ref{ieq:est_wkb}) that 
\begin{align}
    \frac{1}{k^2} \int_{\mathbb{R}^d} \mathbb{E}|\partial_t\Lambda(t,\bm{x},z_0)|^2 \mathrm{d}\bm{x} + \frac{1}{k^2} \int_{\mathbb{R}^d} \mathbb{E}|\nabla_{\bm{x}} \Lambda(t,\bm{x},z_0)|^2 \mathrm{d}\bm{x} & \leq \tilde{C}(t,d) k^{\frac{d}{2}},
\end{align}
where
\begin{equation} \label{eq:wkb_tld_c}
    \tilde{C}(t,d) = C_0(t,d) (K(t,d))^2 \prod_{j=1}^d \left( \frac{1}{a_j} \right)^{\frac{1}{4}} 2^{-d} \pi^{-\frac{5d}{4}} \tilde{C}
\end{equation}
is a positive constant depending on $t$ but not $k$. According to Lemma (\ref{lm:frame_error_anaz}) (1), we obtain
\begin{align}
    \mathbb{E}\left\Vert u_{\mathrm{FGS}}^k\left(t,\cdot \right) - u_{\mathrm{FGA}}^k\left(t,\cdot \right) \right\Vert_E^2 \leq \frac{8 \tilde{C}(t,d)}{M} k^{\frac{d}{2}}.
\end{align}
This proves (\ref{eq:wkb_es}), and (\ref{eq:wkb_e0}) follows directly from (\ref{eq:error_fga}).
\end{proof}

\begin{remark}
    In the above proof, when $a_{\mathrm{in}}$ is not Gaussian, one can obtain similar results as long as $a_{\mathrm{in}}$ satisfies the following
    \begin{align}
        \left|D^{2n} a_{\mathrm{in}}(\bm{x})\right| \leq \left|h^{2n}(\bm{x}) a_{\mathrm{in}}(\bm{x})\right|
    \end{align}
    for any positive integer $n$, where $h^{2n}(\bm{x})$ is a $2n$-degree polynomial of $\bm{x}$.
\end{remark}

\begin{remark}
    We discuss how $C(t,d)$ in (\ref{eq:wkb_es}) and (\ref{eq:wkb_e0}) depends on the dimensionality $d \geq 3$. In the proof of Theorem \ref{thm:wkb_init}, we have $C(t,d) = 8\tilde{C}(t,d)$. From (\ref{eq:wkb_tld_c}), we notice that $\tilde{C}(t,d)$ depends on the constant $\tilde{C}$ and the behavior of $\tilde{C}$ as $d$ increases should be very similar to that of $C_0^*$ given in (\ref{eq:wkb_c0}). Thus it suffices to determine the relationship between $C_0^*$ and $d$. Similar to the Gaussian cases, $C_0^*$ increases exponentially with $d$. Therefore, it is reasonable to conclude that $\tilde{C}(t,d)$, as well as $C(t,d)$, also increases exponentially as $d$ increases.
\end{remark}

{
    
    \begin{remark}
        We remark on the advantages of the FGS over other existing methods such as the finite difference method. For simplicity, we ignore the asymptotic error $E_{\mathrm{FGA}}$ of the FGA ansatz and estimate the approximation error $E_0$ of the FGS algorithm by the sampling error $E_S$, that is,
        \begin{equation}
            E_0 = \Vert u - u_{\mathrm{FGS}} \Vert_E \approx \left(\mathcal{E}_S\right)^{\frac{1}{2}} \leq \sqrt{\frac{C(t,d)}{M}k^{\frac{d}{2}}} .
        \end{equation}
        In order to guarantee that $E_0 \leq \delta$, the sample size of the FGS algorithm should be more than 
        \begin{equation}
            \mathcal{N}_{\mathrm{FGS}} = \frac{C(t,d)}{\delta^2} k^{\frac{d}{2}}, \quad \delta > 0.
        \end{equation}
        {
            Note that the constant $C(t,d)$ in the least sample size $\mathcal{N}_{\mathrm{FGS}}$ is a $d$th power of an $\mathcal{O}(1)$ parameter. Although $\mathcal{N}_{\mathrm{FGS}}$ increases with $k$ in the WKB initial cases, but compared to the computational complexity of mesh-based numerical methods given in (\ref{eq:fd_order}), there is still a big saving on computational costs to solve the wave equation with the FGS.
        }
    \end{remark}
}

\section{Numerical examples} \label{sec:num_ex}

In the previous sections, we present how to apply the FGS algorithm to solve scalar wave equations with Gaussian initial conditions and WKB initial conditions. We also elaborate the relationship between the sampling error, the sample size $M$, the wave number $k$ and the dimensionality $d$. In this section, we give several examples of using the FGS to numerically solve the wave equation (\ref{wave_eq}) and illustrate the theoretical results in Theorem \ref{thm:gauss_init} and Theorem \ref{thm:wkb_init}. These examples include applying the FGS to solve 1D, 2D, and 3D wave equations with Gaussian and WKB initial data respectively. For each example, we explore the relationship of the sampling error and the wave number under different velocity functions, and compare the numerical results with the theoretical conclusions. 

\subsection{Examples with Gaussian initial conditions}

In this subsection, we aim to study the results of the sampling error $E_S$ when applying the FGS to wave equations with Gaussian initial conditions. We use the FGS to approximate the wave function at time $t = 0.5$ and study the relationship between the sampling error $E_S$, the wave number $k$ and the sample size $M$.

Recall that sampling error $E_S$ is defined as
\begin{align}
    E_S(M,k) = \left\Vert u^k_{\mathrm{FGA}}(t,\cdot) - u^k_{\mathrm{FGS}}\left(t,\cdot;M,\left\{ z_0^{(j)} \right\}_{j=1}^M \right) \right\Vert_{E},
\end{align}
where $u^k_{\mathrm{FGA}}$ is the FGA ansatz and the $u^k_{\mathrm{FGS}}$ is the FGS solution. Notice that for any fixed setting of $(M,k)$, the sampling error $E_S$ is a random variable and varies with different initial samples. Therefore, we measure the mean square sampling error $\mathcal{E}_S(M,k)$, that is, 
\begin{align}
    \mathcal{E}_S(M,k) = \mathbb{E} \left\Vert u^k_{\mathrm{FGA}}(t,\cdot) - u^k_{\mathrm{FGS}}\left(t,\cdot;M,\left\{ z_0^{(j)} \right\}_{j=1}^M \right) \right\Vert_{E}^2,
\end{align}
in each numerical case. We use 30 simulations for each sample size $M$ and wave number $k$ and calculate the square root of the mean square sample error. In each case, we use a sufficiently large sample size to compute an FGS wave function as an approximation of the FGA ansatz. For wave equations of dimensionality $d = 1,2,3$, the large sample sizes used are $M_0 = 1.5 \times 10^5, 2.0 \times 10^5, 2.5 \times 10^5$ respectively. In each of the following examples, the initial sampling is based on the density given (\ref{eq:pdf_gauss}), and the fourth order Runge-Kutta method is used to evolve the ODE system (\ref{eq:ode_qp}) and (\ref{eq:ode_a}) for each initial sample up to time $t = 0.5$. The time step is chosen as $\Delta t = 1.0 \times  10^{-4}$, which is small enough that the error of the Runge-Kutta method is negligible. The FGS wave function is reconstructed based on (\ref{eq:fgs_wavefunc}). $\psi^k_{\pm}$ is computed directly by (\ref{eq:psi_gauss}).

\begin{xmpl}
Use the FGS to solve 1D wave equation. The initial conditions are
\begin{equation}
    \begin{cases}
        f_0^k(x) = 0, \\
        f_1^k(x) = k \left( \frac{2k}{\pi}\right)^{\frac{1}{4}} \exp\left(-\i k x -k x^2\right).
    \end{cases}
\end{equation}
The velocity functions are: $c_1(x) = 1$ and $c_2(x) = 1+\frac{1}{4}\sin(x)$. 
\end{xmpl}

We choose wave numbers $k = 2^9, 2^{10}, 2^{11}, 2^{12}$ and sample sizes $M$ = 50, 100, 200, 400, 800, 1600, 3200. Initial samples $\{z_0^{(j)}\}_{j=1}^M$ are obtained from the following probability density: 
\begin{align}
    \pi(q,p) = \left(\frac{k}{2\pi}\right) \frac{\sqrt{2}}{3} \exp\left(-\frac{(p+1)^2 + 2q^2}{6/k}\right).
\end{align}
The wave function is constructed on the spatial interval $[-1,1]$. 

We present the numerical results in Table \ref{tb:1d_gauss} and Figure \ref{fig:1d_gauss}. For both velocity functions, the mean square sampling errors $\mathcal{E}_S$ are independent of $k$ and their square roots are of order $\mathcal{O}(M^{-\frac{1}{2}})$, which indicates that there are similar results as Theorem (\ref{thm:gauss_init}) when applying the FGS to 1D wave equations with Gaussian initial data.  
\begin{table}[htb]
	\centering
        \caption{The square root of the mean square sampling error $\mathcal{E}_S(M,k)$ of applying the FGS to 1D wave equations with Gaussian initial data.}
	\begin{tabular}{c|ccccccc}
		\toprule
        $c_1(x)$ & $M=50$ & $M=100$ & $M=200$ & $M=400$ & $M=800$ & $M=1600$ & $M=3200$ \\
        \midrule
        $k=2^9$ & 2.3856e-01 & 1.7932e-01 & 1.1703e-01 & 8.8621e-02 & 6.2476e-02 & 4.4833e-02 & 3.2010e-02 \\
        $k=2^{10}$ & 2.4038e-01 & 1.8117e-01 & 1.1775e-01 & 9.3241e-02 & 6.2403e-02 & 4.5508e-02 & 3.0250e-02 \\
        $k=2^{11}$ & 2.6065e-01 & 1.7205e-01 & 1.2661e-01 & 8.5084e-02 & 6.2171e-02 & 4.1782e-02 & 3.2901e-02 \\
        $k=2^{12}$ & 2.5090e-01 & 1.8182e-01 & 1.2389e-01 & 9.1502e-02 & 6.6161e-02 & 4.6389e-02 & 3.1538e-02 \\
		\midrule
        $c_2(x)$ & $M=50$ & $M=100$ & $M=200$ & $M=400$ & $M=800$ & $M=1600$ & $M=3200$ \\
        \midrule
        $k=2^9$ & 2.4662e-01 & 1.7574e-01 & 1.2691e-01 & 9.4593e-02 & 6.1579e-02 & 4.8843e-02 & 3.2599e-02 \\
        $k=2^{10}$ & 2.4911e-01 & 1.8382e-01 & 1.1948e-01 & 9.4251e-02 & 6.0482e-02 & 4.4729e-02 & 3.1154e-02 \\
        $k=2^{11}$ & 2.5076e-01 & 1.8512e-01 & 1.2300e-01 & 9.2491e-02 & 6.2701e-02 & 4.4771e-02 & 3.0680e-02 \\
        $k=2^{12}$ & 2.5542e-01 & 1.8720e-01 & 1.2539e-01 & 8.5601e-02 & 6.1707e-02 & 4.4333e-02 & 3.3418e-02 \\
		\bottomrule
	\end{tabular}
    \label{tb:1d_gauss}
\end{table}
\begin{figure}[htb] 
    \centering 
    \includegraphics[width=1\textwidth,trim={50 10 50 10},clip]{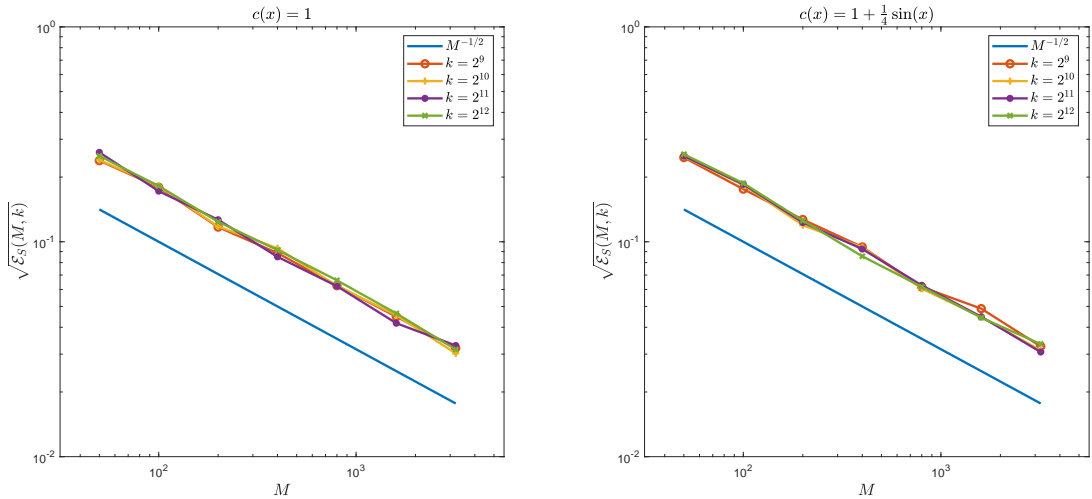} 
    \caption{The square root of the mean square sampling error $\mathcal{E}_S(M,k)$ of applying the FGS to 1D wave equations with Gaussian initial data.} 
    \label{fig:1d_gauss} 
\end{figure}

\begin{xmpl}
Use the FGS to solve 2D wave equation. The initial conditions are
\begin{equation}
    \begin{cases}
        f_0^k(\bm{x}) = 0, \\
        f_1^k(\bm{x}) = k \left( \frac{2k}{\pi}\right)^{\frac{1}{2}} \exp\left( - \i k(x_1 +x_2) - k \left| \bm{x} \right|^2\right).
    \end{cases}
\end{equation}
The velocity functions are: $c_1(\bm{x}) = 1$ and $c_2(\bm{x}) = 1 + \frac{1}{4} \sin(x_1 + x_2)$.
\end{xmpl}

We choose wave numbers $k = 2^8, 2^9, 2^{10}, 2^{11}$ and sample sizes $M$ = 200, 400, 800, 1600, 3200, 6400, 12800. Initial samples $\{z_0^{(j)}\}_{j=1}^M$ are from the following density: 
\begin{equation}
    \pi(\bm{q},\bm{p}) = \left(\frac{k}{2\pi}\right)^2 \frac{2}{9} \exp\left(-\frac{(p_1+1)^2 + (p_2+1)^2 + 2(q_1^2 + q_2^2)}{6/k}\right).
\end{equation}
The wave function is constructed on the spatial interval $[-1,1] \times[-1,1]$. 

We present the numerical results in Figure \ref{fig:2d_gauss}. One may easily notice that for each velocity function, the mean square sampling error $\mathcal{E}_S$ is independent of $k$ and its square root is of order $\mathcal{O}(M^{-\frac{1}{2}})$. These are similar results as Theorem (\ref{thm:gauss_init}) when applying the FGS to 2D wave equations with Gaussian initial data. A case of 2D wavefields computed by the FGS and by the spectral method is plotted in Figure \ref{fig:2d_gauss_snap}, from which one may see that with the FGS, only a small number of initial samples are needed in order to calculate a comparably accurate solution. 
\begin{figure}[htb] 
    \centering 
    \includegraphics[width=1\textwidth,trim={50 10 50 10},clip]{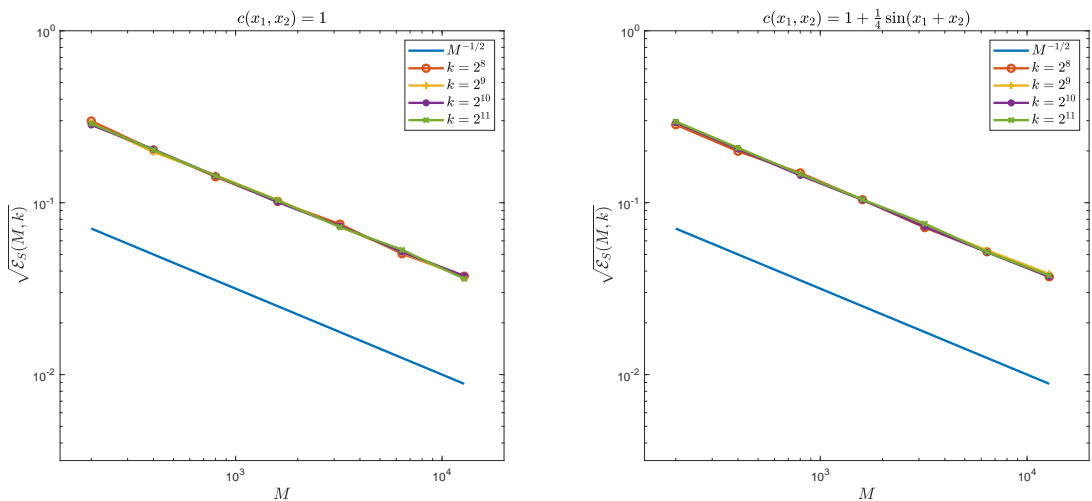} 
    \caption{The square root of the mean square sampling error $\mathcal{E}_S(M,k)$ of applying the FGS to 2D wave equations with Gaussian initial data.} 
    \label{fig:2d_gauss}
\end{figure}
\begin{figure}[htb] 
    \centering 
    \includegraphics[width=0.8\textwidth,trim={0 50 50 50},clip]{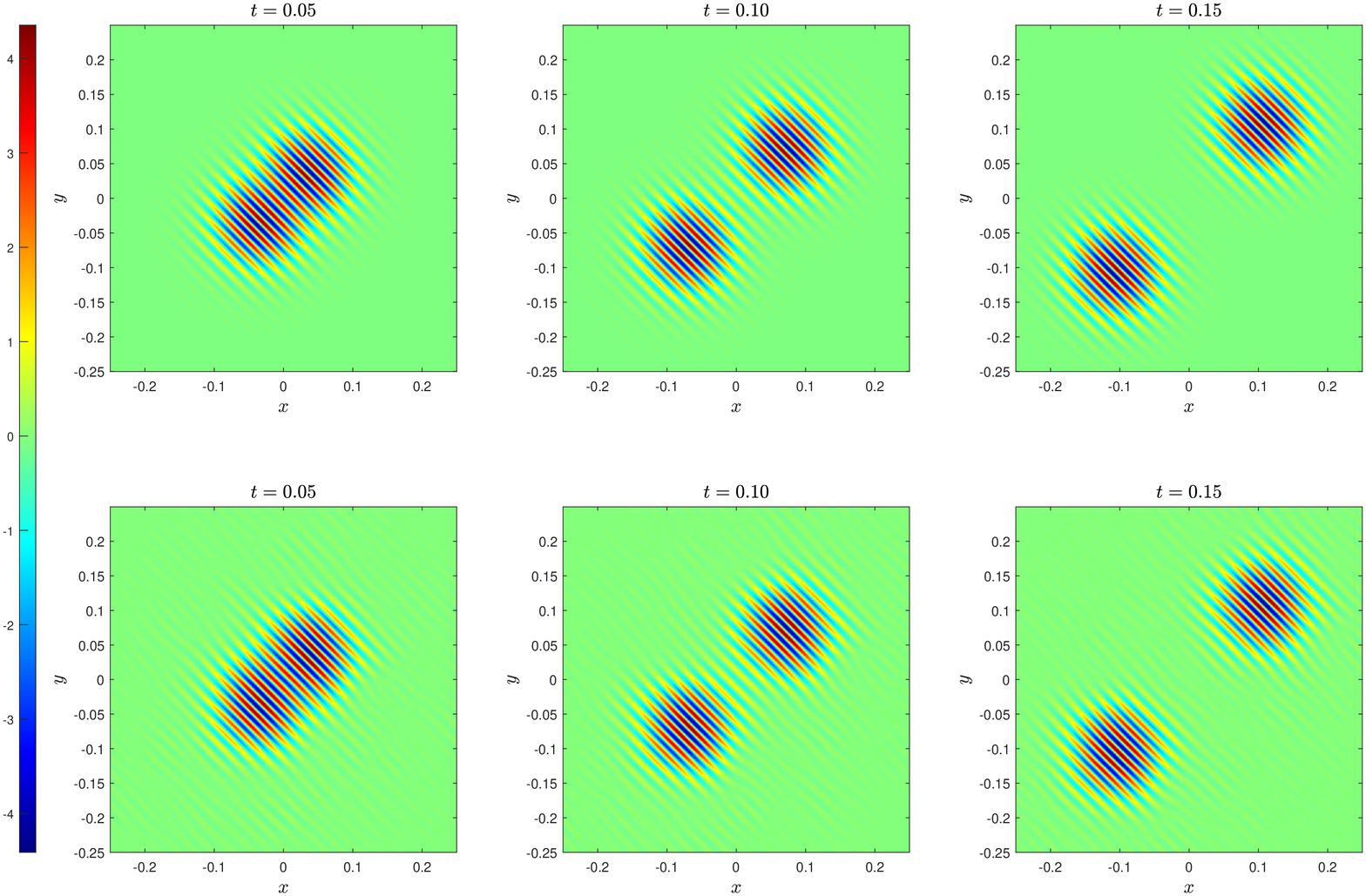} 
    \caption{The 2D wavefields computed by the FGS and the spectral method. The velocity function is $c(\bm{x})=1$ and $k=2^8$. The top row: the wave function computed by the spectral method. The bottom row: the wave function computed by the FGS using sample size $M=800$. From the left column to the right column, the wave functions in each column are at time $t=0.05,0.10,0.15$ respectively.} 
    \label{fig:2d_gauss_snap}
\end{figure}

\begin{xmpl}
Use the FGS to solve 3D wave equation. The initial conditions are
\begin{equation}
    \begin{cases}
        f_0^k(\bm{x}) = 0, \\
        f_1^k(\bm{x}) = k \left( \frac{2k}{\pi}\right)^{\frac{3}{4}} \exp\left( - \i k(x_1 + x_2 + x_3) - k \left| \bm{x} \right|^2 \right).
    \end{cases}
\end{equation}
The velocity functions are:  $c_1(\bm{x}) = 1$ and $c_2(\bm{x})=1+\frac{1}{4}\sin(x_1 + x_2 + x_3)$.
\end{xmpl}

We choose wave numbers $k = 2^5, 2^6, 2^7, 2^8$ and sample sizes $M$ = 800, 1600, 3200, 6400, 12800, 25600, 51200. Initial samples $\{z_0^{(j)}\}_{j=1}^M$ are generated from the following probability density function: 
\begin{equation}
    \pi(\bm{q},\bm{p}) = \left(\frac{k}{2\pi}\right)^3 \frac{2 \sqrt{2}}{27} \exp\left(-\frac{(p_1+1)^2 + (p_2+1)^2 + (p_3+1)^2 + 2q_1^2 + 2q_2^2 + 2q_3^2}{6/k}\right).
\end{equation}
The wave function is constructed on the spatial interval $[-1,1] \times [-1,1] \times [-1,1]$. 

We present the numerical results in Figure \ref{fig:3d_gauss}. For each velocity function, the mean square sampling error $\mathcal{E}_S$ does not depend $k$ and its square root is of order $\mathcal{O}(M^{-\frac{1}{2}})$. These results are consistent with Theorem (\ref{thm:gauss_init}) in cases of applying the FGS to 3D wave equations with Gaussian initial data. 

We also present the runtime of computing 3D FGS wave function with different sample sizes in Table \ref{tb:3d_gauss_time}. Notice that the FGS algorithm is highly parallelizable: the evolution for each initial sample $(\bm{q},\bm{p})$ and the reconstruction of wave function at each spatial location $\bm{x}$ are all independent. Therefore, one may easily conduct a parallel implementation of the FGS on either the CPU or on one or more GPUs. The data in Table \ref{tb:3d_gauss_time} is obtained by a GPU-parallel implementation. A single GPU is used in this example and it is a NVIDIA GeForce RTX 3090 with 10496 CUDA cores, 82 multiprocessors, 24 GB of global memory and compute capability 8.6. This particular graphics card is hosted by a single 16-core Intel Xeon Gold 6130 at 2.10 GHz with 22 MB of cache and 192078 GB of memory. GPU implementations are compiled using the NVIDIA HPC compiler NVFORTRAN 21.7. All experiments are run on Ubuntu 18.04. For each setting of $(M,k)$, the 3D wavefield propagates up to time $t=1$ with time step $\Delta t = 1.0\times 10^{-4}$ and is reconstructed on a mesh grid of the spatial interval $[-2,2]\times[-2,2]\times[-2,2]$ with mesh size $N = 4k$ in each dimension. The initial sampling is implemented in MATLAB and the time evolution and wavefield reconstruction are parallelized using CUDA Fortran. As Table \ref{tb:3d_gauss_time} shows, the running time of sampling and evolution is almost negligible. Besides, the runtime of evolution is insensitive to the sample sizes from $200$ to $12800$ shown in the table. The running time of reconstruction is dominant in the program and increases proportionally with the sample size. Still, one may notice that the runtime of relatively small sample sizes, like $12800$, is much smaller than that of a large sample size that is used to cover the overall phase space. This comparison is more obvious when $k$ is large, which demonstrates an advantage of the FGS that it can provide a solution of comparably high accuracy with much smaller computational cost and running time, especially in approximating high-frequency wavefields. 
\begin{figure}[htb] 
    \centering 
    \includegraphics[width=1\textwidth,trim={50 10 50 10},clip]{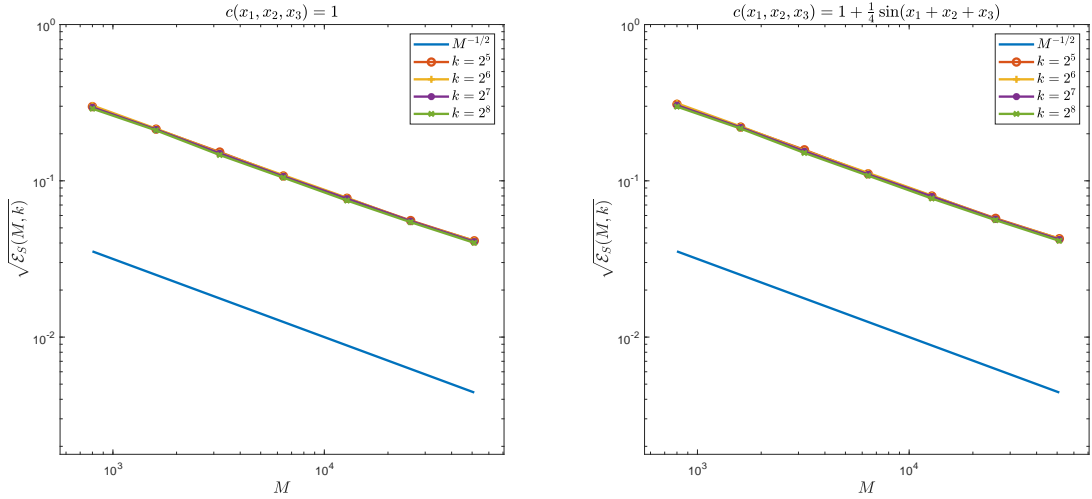} 
    \caption{The square root of the mean square sampling error $\mathcal{E}_S(M,k)$ of applying the FGS to 3D wave equations with Gaussian initial data.} 
    \label{fig:3d_gauss}
\end{figure}
\begin{table}[htb]
	\centering
    \caption{The runtime in seconds of the FGS for 3D wave equations with different sample sizes.}
	\begin{tabular}{c|rrrr|r}
		\toprule
        Initial sampling & $M = 200$ & $M=800$ & $M=3200$ & $M=12800$ & $M = 250000$\\
        \midrule
        $k=2^5$ & 0.004 & 0.009 & 0.030 & 0.094 & 1.72 \\
        $k=2^6$ & 0.005 & 0.009 & 0.027 & 0.097 & 1.79 \\
        $k=2^7$ & 0.004 & 0.010 & 0.028 & 0.099 & 1.83 \\
        $k=2^8$ & 0.003 & 0.011 & 0.032 & 0.099 & 1.88 \\
        \midrule
        Time evolution & $M = 200$ & $M=800$ & $M=3200$ & $M=12800$ & $M = 250000$\\
        \midrule
        $k=2^5$ & 0.33 & 0.34 & 0.33 & 0.33 & 3.91 \\
        $k=2^6$ & 0.31 & 0.33 & 0.33 & 0.33 & 3.91 \\
        $k=2^7$ & 0.34 & 0.35 & 0.35 & 0.35 & 3.92 \\
        $k=2^8$ & 0.35 & 0.35 & 0.34 & 0.34 & 3.91 \\
        \midrule
        Reconstruction & $M = 200$ & $M=800$ & $M=3200$ & $M=12800$ & $M = 250000$\\
        \midrule
        $k=2^5$ & 0.13 & 0.32 & 1.17 & 4.65 & 90.31 \\
        $k=2^6$ & 0.60 & 1.39 & 4.04 & 15.46 & 295.80 \\
        $k=2^7$ & 3.99 & 7.81 & 22.95 & 86.36 & 1696.41 \\
        $k=2^8$ & 33.47 & 60.00 & 173.60 & 632.50 & 12295.80\\
		\bottomrule
	\end{tabular}
    \label{tb:3d_gauss_time}
\end{table}

\subsection{Examples with WKB initial conditions}

In this subsection, we aim to study the results of the sampling error $E_S$ when applying the FGS to wave equations with WKB initial conditions. Similar to the Gaussian initial cases, we use the FGS to approximate the wave function at time $t = 0.5$ and study the relationship between the sampling error $E_S$, the wave number $k$ and the sample size $M$. To estimate the sampling error $E_S$, we do 30 simulations for each setting of sample size $M$ and wave number $k$ and calculate the square root of the mean square error, that is $\sqrt{\mathcal{E}_S}$. The FGA ansatz is numerically approximated by an FGS wave function computed by a sufficiently large sample size. For wave equations of dimensionality $d = 1,2,3$, the sample sizes used are $M_0 = 1.5 \times 10^5, 2.0 \times 10^5, 2.5 \times 10^5$ respectively. In each of the following examples, the initial sampling is based on the method given in Proposition \ref{pro:pdf_wkb}, and the fourth order Runge-Kutta method is used to evolve the ODE system (\ref{eq:ode_qp}) and (\ref{eq:ode_a}) for each initial sample up to time $t = 0.5$. The time step is chosen as $\Delta t = 1.0 \times  10^{-4}$. The FGS wave function is reconstructed based on (\ref{eq:fgs_wavefunc}). $\psi^k_{\pm}$ is computed analytically in our examples.
{
    \begin{xmpl}
        Use the FGS to solve 1D wave equation. The initial conditions are
        \begin{equation}
            \begin{cases}
                f_0^k(x) = 0, \\
                f_1^k(x) = k\left(\frac{50}{\pi}\right)^{\frac{1}{4}} \exp\left(-25\left| x \right|^2 + \frac{\i k}{2}\left(|x-0.25|^2 + |x-0.75|^2\right)\right).
            \end{cases}
        \end{equation}
        The velocity functions are:  $c_1(x) = 1$ and $c_2(x) = 1 + \frac{1}{4} \sin \left( x \right)$. 
        \end{xmpl}

We choose different wave numbers $2^9, 2^{10}, 2^{11}, 2^{12}$, and sample sizes $M$ = 50, 100, 200, 400, 800, 1600, 3200. The initial sampling is implemented as follows: first generate $M$ initial samples of $\{q^{(j)}\}_{j=1}^M$ from $\mathcal{N}(\mu_1,\Sigma_1)$, where $\mu_1 = 0$ and $\Sigma_1= \frac{1}{k} + \frac{1}{50}$. Then for each $q^{(j)}$, generate $y^{(j)}$ from $\mathcal{N}(\mu_2,\Sigma_2)$, where $\mu_2 = \frac{k q^{(j)}}{50 + k}$ and $\Sigma_2 = \frac{1}{50 + k}$. A corresponding initial sample of $p$ is obtained by $p^{(j)} = \nabla S_{\mathrm{in}}(y^{(j)}) = 2 y^{(j)} -1$. The FGS wave function is constructed on the spatial interval $[-1,1]$. 

We present the numerical results in Figure \ref{fig:1d_wkb}. For each velocity function, the square root of the mean square error $\mathcal{E}_S$ is of order $\mathcal{O}(M^{-\frac{1}{2}})$ for each $k$. When $M$ is fixed, $\sqrt{\mathcal{E}_S}$ is slightly increasing as $k$ increases, and the increase rate is of order $\mathcal{O}(k^{\frac{1}{4}})$. The dotted lines in Figure \ref{fig:1d_wkb} indicate that $\sqrt{\mathcal{E}_S}/\sqrt[4]{k}$ is independent of $k$. This example indicates that there are similar results as Theorem (\ref{thm:wkb_init}) when applying the FGS to 1D wave equations with WKB initial data.
\begin{figure}[htb] 
    \centering 
    \includegraphics[width=1\textwidth,trim={50 10 50 10},clip]{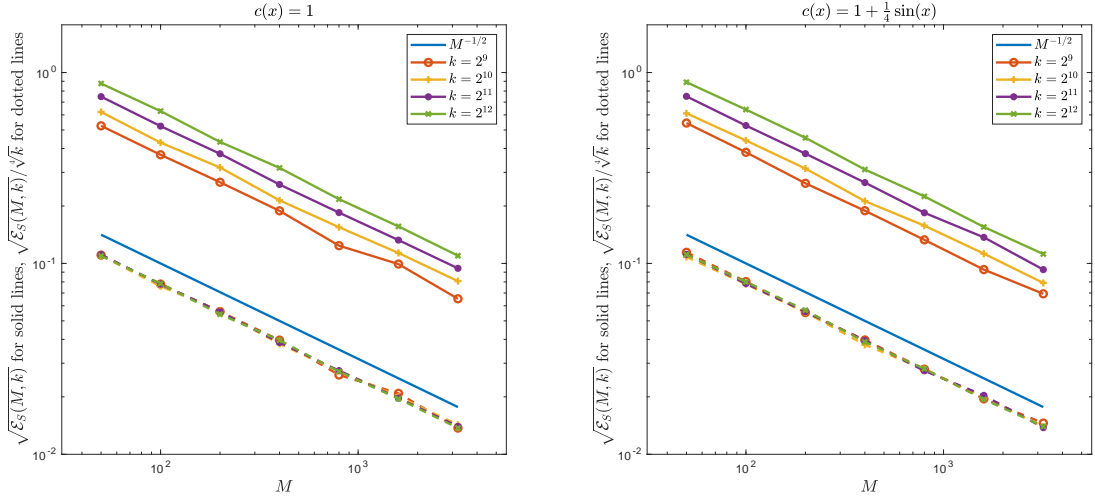} 
    \caption{The square root of the mean square sampling error $\mathcal{E}_S(M,k)$ of applying the FGS to 1D wave equations with WKB initial data.} 
    \label{fig:1d_wkb}
\end{figure}
}
\begin{xmpl}
    Use the FGS to solve 1D wave equation. The initial conditions are
    \begin{equation}
        \begin{cases}
        f_0^k(\bm{x}) = 0, \\
        f_1^k(\bm{x}) = k\frac{2}{\sqrt{3}}\pi^{-\frac{1}{4}}100^{\frac{5}{4}} x^2 \exp\left(-50\left| x \right|^2 + \i k |x-0.5|^2\right).
    \end{cases}
    \end{equation}
    The velocity functions are $c_1(\bm{x}) = 1$ and $c_2(\bm{x}) = 1 + \frac{1}{4} \sin \left( x_1 + x_2 \right)$. 
\end{xmpl}

In this example, the WKB initial condition $f^k_1$ has a non-Gaussian amplitude $a_{\mathrm{in}}(x) = \frac{1}{\mathcal{Z}_a} x^2 \exp\left(-50|x|^2\right)$, where $\mathcal{Z}_a$ is a normalization factor that ensures $\int_{-\infty}^{\infty} a^2_{\mathrm{in}}(x) \d x = 1$. Instead of using Proposition \ref{pro:pdf_wkb} (3) directly for sampling, we apply the inverse transform sampling strategy to generate initial samples $(q,p)$. First, we compute the marginal cumulative distribution function of $\mathcal{Q}$:
\begin{equation}
    F_{\mathcal{Q}}(q) = \int^{q}_{-\infty} \pi_{\mathcal{Q}}(t) \d t 
\end{equation}
where $\pi_{\mathcal{Q}}(q)$ follows from (\ref{eq:pdf_q}) and then compute the the inverse $F^{-1}_{\mathcal{Q}}$ of $F_{\mathcal{Q}}$. Then we generate samples $u_0^{(j)} (j = 1,\cdots,M)$ from the uniform distribution on $[0,1]$ and compute the corresponding samples $q^{(j)} = F^{-1}_{\mathcal{Q}}(u_0^{(j)})$ for $\mathcal{Q}$. Given each $q^{(j)}$, we implement a similar procedure to generate sample $y^{(j)}$ from the conditional distribution $\pi_{\mathcal{Y}|\mathcal{Q}}$ in (\ref{eq:pdf_y}) and finally $p^{(j)} = \nabla S_{\mathrm{in}}(y^{(j)}) = 2y^{(j)} - 1$. We construct the FGS function on $[-1,1]$. Figure \ref{fig:1d_wkb_non_gauss} shows how the sampling error $\mathcal{E}_S(M,k)$ in mean square changes with the wave number $k$ and the sample size $M$. We notice a similar trend as the preceding example on WKB initial data with Gaussian amplitude. Particularly, the almost overlapping dotted lines in the figure indicate that the increase rate of $\sqrt{\mathcal{E}_S}$ with $k$ is of order $\mathcal{O}(k^{\frac{1}{4}})$, which coincides with our conclusion in Theorem \ref{thm:wkb_init}. This example illustrates the feasibility of the FGS for WKB initial data with non-Gaussian amplitude. 

\begin{figure}[htb] 
    \centering 
    \includegraphics[width=1\textwidth,trim={50 10 50 10},clip]{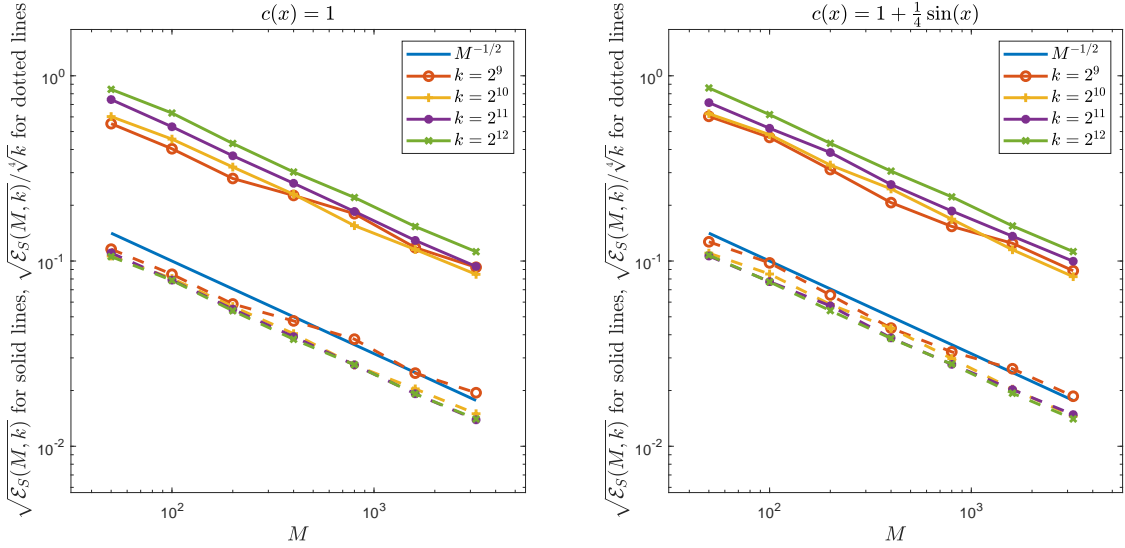} 
    \caption{The square root of the mean square sampling error $\mathcal{E}_S(M,k)$ of applying the FGS to 1D wave equations with WKB initial data with non-Gaussian amplitude.} 
    \label{fig:1d_wkb_non_gauss}
\end{figure}

\begin{xmpl}
Use the FGS to solve 2D wave equation. The initial conditions are
\begin{equation}
    \begin{cases}
        f_0^k(\bm{x}) = 0, \\
        f_1^k(\bm{x}) = k\left(\frac{50}{\pi}\right)^{\frac{1}{2}} \exp\left(-25\left| \bm{x} \right|^2 + \i k |\bm{x}-0.5|^2\right).
    \end{cases}
\end{equation}
The velocity functions are:  $c_1(\bm{x}) = 1$ and $c_2(\bm{x}) = 1 + \frac{1}{4} \sin \left( x_1 + x_2 \right)$. 
\end{xmpl}
We choose wave numbers $2^8, 2^9, 2^{10}, 2^{11}$, and sample sizes $M$ = 200, 400, 800, 1600, 3200, 6400, 12800. The initial sampling proceeds as follows: first generate $M$ initial samples of $\{\bm{q}^{(j)}\}_{j=1}^M$ from $\mathcal{N}(\mu_1,\Sigma_1)$, where
\begin{align}
    \mu_1 = \left(0,0\right), \Sigma_1 = \begin{pmatrix}
        \frac{1}{k} + \frac{1}{50} &  0 \\
        0 & \frac{1}{k} + \frac{1}{50} \\
    \end{pmatrix}.
\end{align} 
Then for each $\bm{q}^{(j)}$ generate an initial sample of $\bm{y}^{(j)}$ from $\mathcal{N}(\mu_2,\Sigma_2)$, where
\begin{align}
    \mu_2 = \left(\frac{ k q^{(j)}_1}{50 + k}, \frac{ k q^{(j)}_2}{50 + k},\right), \Sigma_2 = \begin{pmatrix}
        \frac{1}{50+k} & 0 \\
        0 & \frac{1}{50+k} \\
    \end{pmatrix}.
\end{align}
Finally compute $\bm{p}^{(j)} = \nabla S_{\mathrm{in}}(\bm{y}^{(j)}) = 2( \bm{y}^{(j)} - 0.5)$. The FGS wave function is reconstructed on the spatial interval $[-1,1] \times [-1,1]$. 

We present the numerical results in Figure \ref{fig:2d_wkb} and compare the FGS solution and the spectral method solution in Figure \ref{fig:2d_wkb_snap}. For each velocity function, the square root of $\mathcal{E}_S$ is of order $\mathcal{O}(M^{-\frac{1}{2}})$ for each $k$. When $M$ is fixed, $\sqrt{\mathcal{E}_S}$ slightly increases with $k$ at an increase rate of order $\mathcal{O}(k^{\frac{1}{2}})$. The dotted lines in Figure \ref{fig:2d_wkb} show that $\sqrt{\mathcal{E}_S}/\sqrt{k}$ is independent of $k$. This example indicates that when applying the FGS to 2D wave equations with WKB initial data one may still have similar results as Theorem (\ref{thm:wkb_init}).
\begin{figure}[htb] 
    \centering 
    \includegraphics[width=1\textwidth,trim={50 10 50 10},clip]{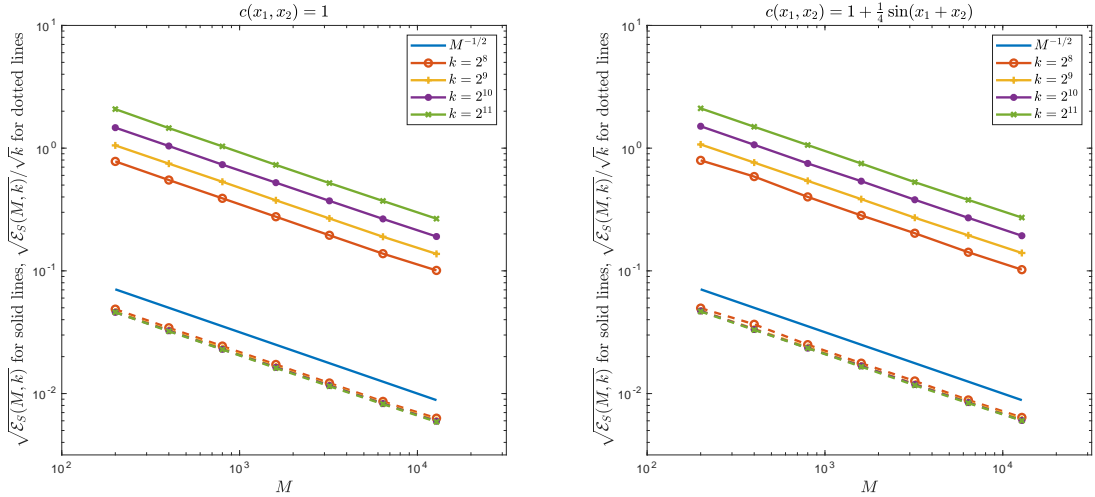} 
    \caption{The square root of the mean square sampling error $\mathcal{E}_S(M,k)$ of applying the FGS to 2D wave equations with WKB initial data.}
    \label{fig:2d_wkb}
\end{figure}
\begin{figure}[htb] 
    \centering 
    \includegraphics[width=1\textwidth,trim={0 0 50 0},clip]{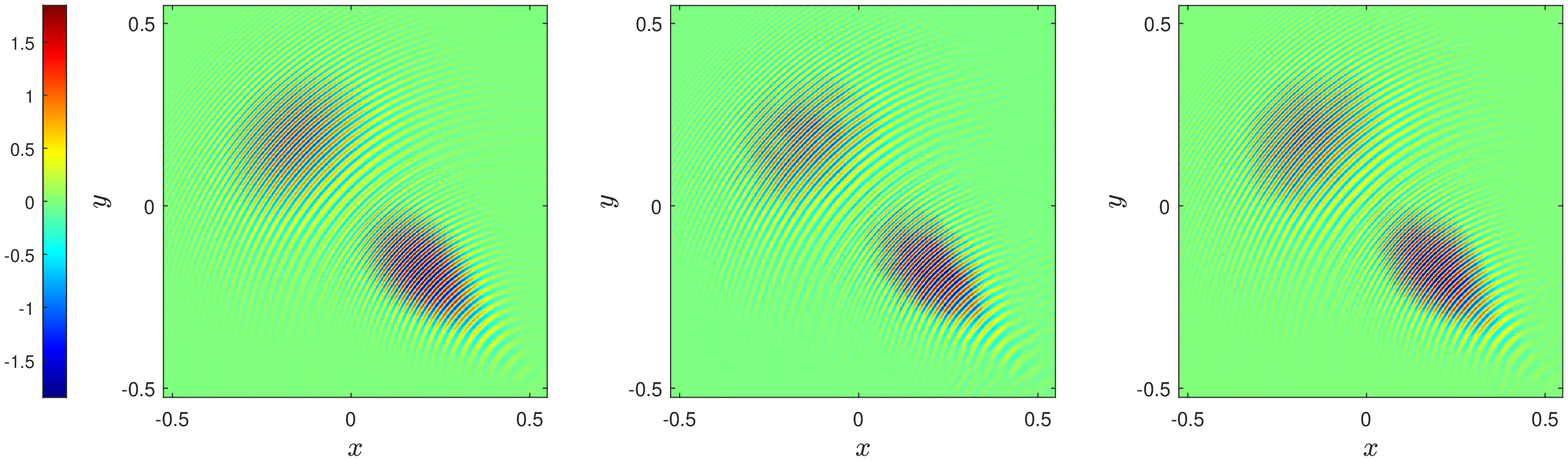} 
    \caption{The 2D wavefields computed by the FGS and the spectral method. The velocity function is $c(\bm{x}) = 1$, $k=2^8$, $t=0.25$. From the left to the right, the wave functions are the solution given by the spectral method, the solution given by the FGS with sample size $M=12800$, the solution given by the FGS with sample size $M=25600$ respectively.} 
    \label{fig:2d_wkb_snap}
\end{figure}
\begin{xmpl}
    Use the FGS to solve 3D wave equation. The initial conditions are
    \begin{equation}
        \begin{cases}
            f_0^k(\bm{x}) = 0, \\
            f_1^k(\bm{x}) = k\left(\frac{5}{\pi}\right)^{\frac{3}{4}} \exp\left(-\frac{5}{2}\left| \bm{x} \right|^2 + \i k |\bm{x}-0.5|^2\right).
        \end{cases}
    \end{equation}
    The velocity functions are:  $c_1(\bm{x}) = 1$ and $c_2(\bm{x}) = 1 + \frac{1}{4} \sin \left( x_1 + x_2 + x_3 \right)$. 
\end{xmpl}
We choose wave numbers $k = 2^5, 2^6, 2^7, 2^8$ and sample sizes $M$ = 800, 1600, 3200, 6400, 12800, 25600, 51200. The initial sampling is implemented as follows: first generate $M$ initial samples of $\{\bm{q}^{(j)}\}_{j=1}^M$ from $\mathcal{N}(\mu_1,\Sigma_1)$, where
\begin{align}
    \mu_1 = \left(0,0,0 \right), \Sigma_1 = \begin{pmatrix}
        \frac{1}{k} + \frac{1}{5} &  0  & 0 \\
        0 & \frac{1}{k} + \frac{1}{5} & 0 \\
        0 & 0 & \frac{1}{k} + \frac{1}{5}
    \end{pmatrix}.
\end{align} 
Then for each $\bm{q}^{(j)}$, generate an initial sample of $\bm{y}^{(j)}$ from $\mathcal{N}(\mu_2,\Sigma_2)$, where
\begin{align}
    \mu_2 = \left(\frac{ k q^{(j)}_1}{5 + k}, \frac{ k q^{(j)}_2}{5 + k}, \frac{ k q^{(j)}_3}{5 + k} \right), \Sigma_2 = \begin{pmatrix}
        \frac{1}{5+k} & 0 & 0 \\
        0 & \frac{1}{5+k} & 0 \\
        0 & 0 & \frac{1}{5+k}
    \end{pmatrix}.
\end{align}
A corresponding initial sample of $\bm{p}$ is obtained by $\bm{p}^{(j)} = \nabla S_{\mathrm{in}}(\bm{y}^{(j)}) = 2( \bm{y}^{(j)} - 0.5)$. The wave function is reconstructed on $[-2,2] \times [-2,2] \times [-2,2]$. 

The results are presented in Figure \ref{fig:3d_wkb}. The square root of $\mathcal{E}_S$ is of order $\mathcal{O}(M^{-\frac{1}{2}})$ for each $k$. When $M$ is fixed, $\sqrt{\mathcal{E}_S}$ increases with $k$ at a rate of order $\mathcal{O}(k^{\frac{3}{4}})$. From the dotted lines in Figure \ref{fig:3d_wkb} one may see that $\sqrt{\mathcal{E}_S}/\sqrt[4]{k^3}$ is insensitive to $k$. This example demonstrates the conclusion of Theorem (\ref{thm:wkb_init}) in the cases of applying the FGS to 3D wave equations with WKB initial data.
\begin{figure}[htb] 
    \centering 
    \includegraphics[width=1\textwidth,trim={50 10 50 10},clip]{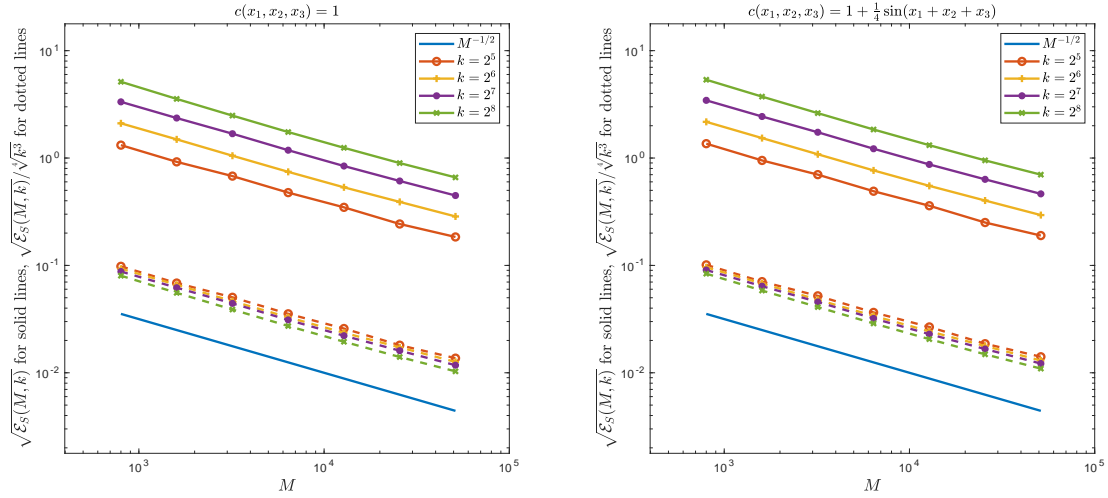} 
    \caption{The square root of the mean square sampling error $\mathcal{E}_S(M,k)$ of applying the FGS to 3D wave equations with WKB initial data.}
    \label{fig:3d_wkb}
\end{figure}

\section{Conclusion and Discussion}\label{sec:Conclusion}

In this work, we generalize the frozen Gaussian sampling (FGS) for the wave equations. The FGS shows great potential for reducing the computational costs in the propagation and reconstruction of high-frequency wavefields. The key of this stochastic simulation method is to design appropriate sampling density functions according to different types of initial conditions. An ideal sampling density should be easy to sample and at the same time, reduce the corresponding sampling error as much as possible. For wave equations, we introduce two different sampling techniques and estimate the sampling error and total error of the FGS for Gaussian and WKB initial data cases respectively. In particular, we prove and validate through numerical experiments that in Gaussian initial cases, the sampling error in the mean square sense does not depend on the asymptotic parameter $k$. In addition, we prove that in WKB initial cases, the mean square of the sampling error has an upper bound of order $\mathcal{O}(k^{\frac{d}{2}})$, where $d$ is the dimensionality. However, our numerical examples show that the proposed sampling method for the WKB cases can still achieve satisfactory approximation, which indicates that this sampling technique is able to balance sampling convenience and error reduction. It should be natural to extend the above sampling methods to the approximation of many other wave equations, such as hyperbolic systems. More advanced and accurate sampling techniques are also expected for the implementation of the FGS.

\section*{Acknowledgement}

The work of Z.Z. was supported by the National Key R\&D Program of China, Project No. 2021YFA1001200, 2020YFA0712000 and NSFC grant No. 12031013, 12171013. The work of L.C. was partially supported by the NSFC grant No. 11901601. 

	\bibliographystyle{my-plain}
	\bibliography{refs}

\end{document}